\titleformat{\section}{%
\normalfont\large\bfseries}{\thesection.}{1em}{}
\titleformat{\subsection}{%
\normalfont\normalsize\bfseries}{\thesubsection.}{1em}{}
\renewcommand{\phi}{\varphi}
\newcommand{\U}{\mathcal{U}}
\newcommand{\I}{\mathcal{I}}
\newcommand{\step}[1]{\par\medskip\noindent\it#1\rm}
\newcommand{\dcc}{d_{\textup{cc}}}
\DeclareMathOperator{\espo}{e}
\newcommand{\eap}{\espo_{\textup{ap}}}
\newcommand{\expap}{\exp_{\textup{ap}}}
\newcommand{\abs}[1]{\lvert#1\rvert}
\newcommand{\g}{\gamma}
\newcommand{\wh}{\widehat}
\renewcommand{\H}{\mathcal{H}}
\newcommand{\e}{\varepsilon}
\renewcommand{\r}{\varrho}
\newcommand{\B}{\mathcal{B}}
\newcommand{\s}{\sigma}
\newcommand{\cc}{\textup{cc}}
\renewcommand{\P}{\mathcal{P}}
\renewcommand{\O}{\mathcal{O}}
\renewcommand{\cal}[1]{\mathcal{#1}}
\newcommand{\wt}{\widetilde}
\newcommand{\ol}{\overline}
\newcommand{\A}{\mathcal{A}}
\renewcommand{\d}{\delta}
\newcommand{\Eucl}{\textup{Euc}}
\newcommand{\p}{\partial}
\newcommand{\R}{\mathbb{R}}
\newcommand{\N}{\mathbb{N}}
\newtheoremstyle{pippo}  
  {}       
  {}       
   {\sffamily}   
 {}        
  {\bfseries}  
  {.}   
  {1ex}       
  {}           
\newtheoremstyle{pluto}  {}{}
{\slshape}  {}{\bfseries}  {.} {1ex}    {}
\newtheorem{theorem}{Theorem}[section]
\newtheorem{proposition}[theorem]{Proposition}
\newtheorem{lemma}[theorem]{Lemma}
\newtheorem{corollary}[theorem]{Corollary}
\theoremstyle{pluto} 
\newtheorem{definition}[theorem]{Definition}
\newtheorem{remark}[theorem]{Remark}
\renewcommand{\d}{\delta}
\renewcommand{\t}{\tau}
\renewcommand{\a}{\alpha}
\renewcommand{\b}{\beta}
\DeclareMathOperator{\Lip}{Lip}
\DeclareMathOperator{\Span}{span}
\newcommand{\norm}[1]{\left\Vert#1\right\Vert}
\numberwithin{equation}{section}
\let\oldbibliography\thebibliography
\renewcommand{\thebibliography}[1]{%
  \oldbibliography{#1}%
  \setlength{\itemsep}{0pt}%
}
\newenvironment{enumerate*}{\begin{enumerate}[noitemsep] }{\end{enumerate}}
\newenvironment{itemize*}{\begin{itemize}[noitemsep] }{\end{itemize}}
\newenvironment{description**}{\begin{description}[noitemsep] }{\end{description}}
\begin{document}

\title{Step-$s$ involutive  families of vector fields,
\\ their orbits and  the Poincar\'e inequality
\thanks{2010 Mathematics Subject Classification. Primary 53C17;
Secondary 53C12.
Key words and Phrases: Carnot--Carath\'eodory distance, Poincar\'e inequality,
Integrable distribution.}}
\author{Annamaria Montanari and Daniele Morbidelli}

\date{\today}

\maketitle

\begin{abstract}\small
   We consider
 a family $\H:= \{X_1, \dots, X_m\}$ of  vector fields in
$\R^n$. Under a suitable \emph{$s$-involutivity} assumption on commutators
of order  at most $ s$,
we show a ball-box theorem for Carnot--Carath\'eodory balls
of the family  $\H$
and we prove  the related Poincar\'e
inequality. Each control ball is contained in a  suitable 
    \emph{Sussmann's orbit} of which we discuss some  regularity properties.
 Our main tool
is a class of \emph{almost exponential maps} which we discuss carefully under low  regularity assumptions
 on the coefficients of the vector fields in $\H$.
\end{abstract}


\section{Introduction and main results}

In this paper we  discuss  Carnot--Carath\'eodory   balls and the Poincar\'e inequality for a family $\H= \{X_1, \dots, X_m\}$ of nonsmooth vector fields in $\R^n$ satisfying 
a suitable
involutivity condition of order $s\in \N$, which turns out to be a good
substitute of the well known H\"ormander's rank hypothesis.
Under our assumptions,  control   balls are not necessarily open
sets in the
ambient space~$\R^n$, but each of them is contained in   a suitable \emph{orbit}
associated
with the vector fields  of~$\H$. In this setting we will   prove a ball-box theorem and  the related Poincar\'e
inequality for control balls of the family~$\H$.
Our main tool consists of  a class of \emph{almost exponential
maps}  which are discussed below.

In the
setting of H\"ormander's vector fields,
  control balls have  been  studied by Nagel,
Stein and Wainger  \cite{NagelSteinWainger}, who proved the following fact:  
   assume  that the (smooth) vector fields $X_j$ of the family $\H$
 together with their commutators of
order at most $s$ span the whole  space~$\R^n$ at any point. Denote by $\P:=\P_s:=\{Y_1,
\dots, Y_q\}$ the family of such commutators. Then, given  the
Carnot--Carath\'eodory ball
$B_{\textup{cc}}(x_0, r)$ associated with $\H$, there are
commutators $Y_{i_1}, \dots, Y_{i_n}\in\P$ of lengths
$\ell_{i_1}, \dots, \ell_{i_n}\le s$ such that
the \emph{exponential map}
\begin{equation}\label{fizz}
\Phi(u):= \exp\Bigl(\sum_{1\le k\le n} u_k r^{\ell_{i_k}} Y_{i_k}\Bigr)x_0
\end{equation}
satisfies a ``ball-box'' double inclusion
$
 \Phi(B_\Eucl(  C^{-1}))\subseteq B_\textup{cc}(x_0, r)\subseteq  \Phi(B_\Eucl(
C))$
where $B_\Eucl(C):= B_\Eucl(0,C)\subset\R^n$ denotes the Euclidean ball of
radius $C>0$ centered at the origin. Moreover, they showed that the Lebesgue measure of control balls is doubling.

More recently, Tao and Wright \cite{TaoWright03} discovered  that maps
$\Phi$ could be manipulated   without  the
Campbell--Baker--Hausdorff--Dynkin formula, using  arguments more  based on
Gronwall's inequality.
Subsequently, Street \cite{Street} extended such approach showing that the H\"ormander's condition can be removed, provided that one assumes that for some $s\in\N$
the following   $s$-integrability condition holds:  for all $Y_i, Y_j\in\P=\P_s$, one can write 
\begin{equation}\label{intello}
 [Y_i, Y_j] = \sum_{1\le k\le q} c_{ij}^k Y_k,
\end{equation}
where the functions $c_{ij}^k$ must have suitable
regularity.  
This condition goes back to Hermann, \cite{Hermann} and
it  ensures that any  Sussmann's orbit $\O_\cal{P}$ of the
family $\cal{P}$
is an integral manifold of the distribution generated by $\P$. 
Under  \eqref{intello}, 
    control balls are contained in
the orbits of the family $\P$  and Street \cite{Street} has shown a complete generalization of
the ball-box inclusion to such setting together with  the doubling estimate for
the pertinent measure of the control ball.

Given a family $\H$ and its \emph{Carnot--Carath\'eodory} distance $d_{\textup{cc}}$, a  remarkable
estimate which embodies many properties of the metric space $(\R^n,
d_{\textup{cc}})$ is the associated Poincar\'e inequality.
It is well known that such inequality
plays a crucial role in several questions
concerning analysis and  geometry, consult  the
references
\cite{FranchiLanconelli83,Jerison,SaloffCoste92,GarofaloNhieu96,Cheeger
,HajlaszKoskela,KeithZhong}, to see the Poincar\'e inequality in action.

The Poincar\'e inequality for H\"ormander vector fields was proved first by Jerison 
in \cite{Jerison}. It was observed in \cite{Jerison}, that
  the natural ``exponential maps''  to prove the
Poincar\'e inequality should be factorizable  as compositions of exponentials
of the original vector fields
of $\H$.
 However, in \cite{Jerison}
the Poincar\'e inequality  was achieved  for   H\"ormander vector fields with
different techniques.\footnote{It must be observed  that Jerison's paper also involves a study of some  nontrivial global aspects of the Poincar\'e inequality which we do not discuss here.}
The program implicitly suggested by Jerison was  carried out   in the subsequent
papers \cite{LM,MM04,MM}.
Namely, in \cite{MM},
the present authors showed that, at least for H\"ormander vector fields (even
with quite rough coefficients), a ``ball-box'' double inclusion still holds
 if we change the map $\Phi$ in \eqref{fizz}  with the
\emph{almost
exponential map} 
\begin{equation*}
      E(h):= \expap(h_1 r^{\ell_{i_1}}Y_{i_1})\circ\cdots\circ 
\expap(h_n r^{\ell_{i_n}}Y_{i_n})(x),
\end{equation*}
where  $\expap$ denote 
the approximate exponentials
appearing in \cite{NagelSteinWainger,VaropoulosSaloffCosteCoulhon,Morbidelli98,MM};
see~\cite[Section~2]{MontanariMorbidelli11d} for the  precise definition.

 In this paper,   starting from  some useful first order expansions of~$E$ obtained in 
\cite{MontanariMorbidelli11d} (see Theorem \ref{deduciamo} below)
we discuss  the structure of control balls  for vector fields belonging to a regularity class 
which we call $\B_s$. We say   that a
family $\H=\{X_1, \dots, X_m\}$
 belongs to the   class  $\B_s$ if all $X_j\in\H$ belongs to
$C^{s}$ 
 (this ensures that all commutatros  $Y_j\in\P$ are $  C^1$); moreover,
we require that \eqref{intello} holds for the family $\P$ and
that the functions $c_{ij}^k$  in~\eqref{intello}  are
$C^1  $ smooth with respect to the  differential structure of each orbit; see Definition~\ref{jdover}.

To state our result we need the following notation.  If $\P= \{Y_1, \dots, Y_q\}$ and $x\in\R^n$,
 then $P_x:=\Span \{Y_j(x):1\le j\le q\}$ and $ p_x := \dim P_x.$
Given $r>0$ and
$Y_{i_1}, \dots, Y_{i_p}\in\P$,  let~$\wt Y_{i_k} = r^{\ell_{i_k}}  Y_{i_k}$ 
be the  scaled commutators and put
\begin{equation}\label{expoenne}
 E_{I,x,r}(h):= \expap(h_1  \wt Y_{i_1})\cdots  \expap(h_p
 \wt Y_{i_p})x
\end{equation}
for each $h$ close to $0\in \R^p$ (after passing to $\wt Y_{i_j}$, the variable
$h$ lives at a unit scale).
We also denote by 
 $\sigma^p$  the $p$-dimensional surface measure and by  $B_\textup{cc}$ control balls.
Finally  $B_\r$ denote balls with respect to the distance $\r\ge \dcc$ defined in  \eqref{coscos}.

\begin{theorem}\label{pocaro}
Let $\H$ be a family of $\B_s$ vector fields. Let $\Omega\subset\R^n$ be a
bounded set. Then there is $C>1$ such that the following holds.  Let $x\in
\Omega$ and take a  positive radius $r<C^{-1}$.
Then there is   a family of $p_x =:p$ commutators
$Y_{i_1}, \dots, Y_{i_p}$ such that the map $E:= E_{I,x,r}$ in \eqref{expoenne}
is $C^1$ smooth  on the  unit ball
$B_\Eucl(1)\subset\R^p$ and satisfies
\begin{align}\label{pochetto}
 C^{-1}\le \frac{\abs{ \p_1E(h)\wedge\cdots\wedge\p_pE(h)}}{\abs{\wt
Y_{i_1}(x)\wedge \cdots\wedge
\wt Y_{i_p}(x)}}
& \le C
\quad\text{for all
$h\in B_\Eucl( 1)$, and}
\\ \label{quickbuild}
 E(B_\Eucl(1))& \supseteq B_{\r}(x, C^{-1}r).
\end{align}
Moreover, 
$E_{I,x,r}$  is one-to-one
on $B_\Eucl\left( 1\right)$  and we have the doubling property
\begin{equation}\label{doppietto} 
 \sigma^p(B_{\textup{cc}}(x, 2r))\le C\sigma^p(B_{\textup{cc}}(x,
r))\quad
\text{for all $x\in\Omega $ and $0<r<C^{-1}.$}
\end{equation}
Finally, 
 for any  $C^1$ function $f$, we have
the Poincar\'e inequality
\begin{equation}\label{pppooo} 
 \int_{B_{\textup{cc}}(x,r)}\abs{f(y) - f_{B_{\textup{cc}}(x,r)}} d\s^p(y)\le
C\sum_{j=1}^m\int_{B_{\textup{cc}}(x, r)} \abs{ rX_jf(y)} d\s^p(y).
\end{equation}
\end{theorem}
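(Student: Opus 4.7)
The plan is to follow the scheme used by the authors in \cite{MM} for H\"ormander vector fields, now combined with the $s$-involutive framework of \cite{Street} and, crucially, with the first-order expansion of $E$ supplied by Theorem~\ref{deduciamo}. The four statements are proved in a chain: non-degeneracy \eqref{pochetto} and injectivity first, then the $\r$-ball inclusion \eqref{quickbuild}, then doubling \eqref{doppietto}, and finally the Poincar\'e inequality \eqref{pppooo}.

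\emph{Step 1: selection of $I$, non-degeneracy and injectivity.} At the base point $x$ I would choose $I=(i_1,\dots,i_p)$ so that $\abs{\wt Y_{i_1}(x)\w\cdots\w\wt Y_{i_p}(x)}$ is essentially maximal among $p$-tuples of scaled commutators in $\P$; this is the Nagel--Stein--Wainger selection rule adapted to the orbit setting, with $p=p_x$. Theorem~\ref{deduciamo} then furnishes expansions of the form $\p_k E(h)=\wt Y_{i_k}(E(h))+R_k(h)$, with remainders $R_k$ that are admissible combinations of the $\wt Y_j$ evaluated at $E(h)$, with coefficients which become small once $r<C^{-1}$. The maximality at $x$, combined with \eqref{intello} and the $C^1$ propagation of the $c_{ij}^k$ along orbits granted by the class $\B_s$, lets me transport the two-sided bound from $x$ to $E(h)$, yielding \eqref{pochetto}. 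A quantitative version of the same estimate, applied to $E(h_1)-E(h_2)=\int_0^1\nabla E(h_2+\t(h_1-h_2))(h_1-h_2)\,d\t$, gives injectivity on $B_\Eucl(1)$.

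\emph{Steps 2 and 3: $\r$-ball inclusion and doubling.} Once \eqref{pochetto} and injectivity are known, $E$ parametrises a piece of the Sussmann orbit $\O_\P$ through $x$ as a $C^1$ diffeomorphism. For \eqref{quickbuild} I would combine the non-degeneracy with an open/closed argument along sub-unit curves: every direction tangent to $\O_\P$ admissible at scale $r$ can be reached from $x$ through the factorised flows appearing in $E$, up to admissible perturbations controlled by \eqref{intello}. The matching outer inclusion $B_{\cc}(x,r)\subseteq E(B_\Eucl(C))$, which is the analogue here of the ball-box of \cite{TaoWright03,Street} and is reachable with the same tools, together with the area formula and \eqref{pochetto}, gives
\begin{equation*}
\s^p(B_{\cc}(x,r))\;\simeq\;\abs{\wt Y_{i_1}(x)\w\cdots\w\wt Y_{i_p}(x)}=r^{\ell_{i_1}+\cdots+\ell_{i_p}}\abs{Y_{i_1}(x)\w\cdots\w Y_{i_p}(x)}.
\end{equation*}
Doubling \eqref{doppietto} then follows because replacing $r$ by $2r$ multiplies the right-hand side by a bounded factor, even after a possible re-selection of the optimal tuple $I$, which only contributes another controlled multiplicative constant.

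\emph{Step 4: Poincar\'e inequality.} This is the hard part and the main obstacle. Following \cite{LM,MM04,MM}, the decisive feature of $E$ in \eqref{expoenne} is its factorisation as a composition of approximate exponentials of single commutators; moreover, by the very definition of $\expap$ recalled in \cite[\S 2]{MontanariMorbidelli11d}, each $\expap(h_k\wt Y_{i_k})$ is itself a composition of approximate exponentials $\expap(\pm rX_j)$ of the original fields in $\H$, of length controlled by $\ell_{i_k}\le s$. Given $y_1,y_2\in B_{\cc}(x,r)$, writing $y_i=E(h^{(i)})$ and sliding $h^{(1)}$ to $h^{(2)}$ along coordinate edges in $B_\Eucl(1)$, this factorised structure expresses $f(y_1)-f(y_2)$ as a finite sum of integrals of $rX_jf$ along sub-unit curves that remain inside an enlargement $B_{\cc}(x,Cr)$. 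Averaging $\abs{f(y_1)-f_{B_{\cc}(x,r)}}$ against $\s^p$ over $B_{\cc}(x,r)$ and then against the natural parameter measure yields \eqref{pppooo}, provided that (i) the change of variables between path parameters and points of $B_{\cc}(x,Cr)$ has Jacobian controlled by \eqref{pochetto}, and (ii) the resulting integrating measure is comparable with $\s^p$, which is exactly the content of doubling \eqref{doppietto}. The delicate point is to secure (i) uniformly under the low $\B_s$ regularity of the coefficients, and here Theorem~\ref{deduciamo} is once again indispensable.
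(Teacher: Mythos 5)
Your overall architecture (select a maximal $p$-tuple, expand $\partial_kE$ via Theorem~\ref{deduciamo}, then lift curves, then area formula, then Poincar\'e via the factorised structure of $E$) matches the paper's. Steps~2--4 are in agreement with the paper's Lemma~\ref{liftato}, Corollary~\ref{poincuss} and the reference to \cite{LM}. But Step~1 contains a genuine gap, and it is precisely the gap the paper singles out as the main technical obstacle.

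You propose to prove that $E$ is one-to-one by writing $E(h_1)-E(h_2)=\int_0^1 dE(h_2+\tau(h_1-h_2))(h_1-h_2)\,d\tau$ and invoking \eqref{pochetto}. This does not work here. The expansion \eqref{jojo} gives $dE(h)=[\wt Y_{I}(E(h))]\,[I_p+\chi(h)]$ with $\abs{\chi(h)}$ small; the trailing factor is under control, but the leading $n\times p$ matrix $[\wt Y_I(E(h))]$ of vector-field values at $E(h)$ \emph{varies along the segment}, and the theorem is proved without any positive lower bound on $\nu(\Omega)$ in \eqref{nu} — that is, the $p$-form $Y_{i_1}\wedge\cdots\wedge Y_{i_p}$ may degenerate arbitrarily on $\Omega$. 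Consequently the fundamental-theorem-of-calculus integrand is the product of a well-controlled factor and a potentially ill-conditioned, $h$-dependent frame, and one cannot conclude $E(h_1)\ne E(h_2)$. The estimate \eqref{pochetto} bounds the ratio of \emph{top wedge products}, which is a determinant-type statement; it does not deliver the operator-norm control on $\int_0^1 dE$ needed for a direct injectivity argument. The paper explicitly remarks that the Tao--Wright injectivity argument is peculiar to the standard maps $\Phi$ and that ``it does not seem that any direct argument can be adopted'' for $E$. Instead the paper first proves injectivity of $\Phi_{I,x,r}$ under the weaker $\B_s$ regularity (Theorem~\ref{pallascatola}(ii), via the Gronwall-based Theorem~\ref{iniziare} and the ODE analysis of Section~\ref{inusuale} and the Appendix), then builds a $C^1$ lifting $\theta$ with $E\circ\theta=\Phi$ and $\abs{d\theta-I_p}\le\tfrac12$ (Proposition~\ref{loft}), and finally transfers injectivity from $\Phi$ to $E$ on $\theta(B_\Eucl(\eta_3))\supseteq B_\Eucl(\eta_3/2)$. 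Your proposal omits this entire $\Phi$-side analysis and the lifting, which is the core of the proof.

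A secondary gap: in Steps~2--3 you treat the confinement of $\P$-subunit curves to the Sussmann orbit $\O_\H^x$ as automatic. The paper insists this is not obvious (Remark~\ref{picchettina}(i)) and devotes Lemma~\ref{chop} to proving that $\O_\H=\O_{\P,\cc}$ with equivalent topologies, using Bony's theorem and the $C^2$-manifold structure on the orbit from \cite{MontanariMorbidelli11b}. Without this, the path-lifting argument behind \eqref{quickbuild} is not grounded, since the lifting lives in $\O_\H$ while the $\r$-subunit curve a priori lives only in $\R^n$.
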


The constant $C$ in Theorem \ref{pocaro}  turns out to depend on  an  ``admissible constant'' $L_1$
which will be defined precisely in \eqref{lippo}.
Note that  $L_1$  is defined in terms of the  coefficients
$c_{ij}^k$ in \eqref{intello}  but does not involve any positive 
lower bound on the infimum $\nu(\Omega)$ in \eqref{nu}, which is allowed to vanish even on compact sets.
This makes such result  suitable  in the
perspective of multi-parameter distances studied in \cite{Street}.
Observe  that inequalities 
\eqref{pppooo} and \eqref{doppietto} can be proved in more regular settings using arguments from the papers \cite{Jerison}, \cite{Street} and \cite{MM}. However such arguments do not  provide optimal results;
see the discussion in Section \ref{Sttt}.
Finally, since   
in \eqref{quickbuild}   $B_\r$ denotes the control
ball defined by
\emph{all} commutators (with their degrees, see \eqref{coscos}), as a consequence we have   the local inclusion
$B_\cc(x,r)\supseteq B_\O(x, C^{-1}r^s) $, where $B_\O$ is the geodesic ball on the orbit $\O$; see Remark \ref{picchettina}.

Let us mention that under our regularity assumptions, inclusion \eqref{quickbuild} is not completely trivial. Indeed, such inclusion implies 
 in particular  
the following  fact: a subunit   path $\gamma$ of the family $\P=\{Y_1, \dots, Y_q\}$ of   commutators with $\gamma(0)= :x$,  \emph{cannot leave} the Sussmann's orbit $\O_\H^x$ 
 of the \emph{horizontal} family $\H$.\footnote{Recall that given $\H= \{X_1, \dots, X_m\}$ and $x_0\in \R^n$, then the Sussmann's orbit $\O_\H^{x_0}$ is the
set of points in $\R^n$ which are reachable from $x_0$ via a path which is piecewise  an integral curve
of one among the vector  fields of $\H$; see \cite{Sussmann}.}
This statement needs to be checked carefully. See the discussion in Remark \ref{picchettina} and see Lemma \ref{chop}.
In the H\"ormander case, this issue does not appear, because $\O_\H^x= \R^n$, by Chow's Theorem. 
Indeed, in \cite{MontanariMorbidelli11a}, under the H\"ormander assumption, we are able
to prove Theorem~\ref{pocaro} under even lower  regularity assumptions than those of the present paper: namely, 
we assume that higher order  commutators are $C^1$  only along horizontal directions.

A further delicate part of our argument 
is  the proof of the  injectivity of maps $E$. 
Note that the clever argument by  Tao and Wright, \cite{TaoWright03}, \cite{Street}, is peculiar of the 
standard exponential maps $\Phi$ and 
does not extend to our maps $E$.
Since it does not seem that  any direct
 argument can be adopted,  
  we will let to cooperate the maps $E$ and $\Phi$, which,
although
different, have analogous estimates  on Jacobians.
To accomplish this task, we need first to perform an accurate analysis
of the  standard     exponential  maps $\Phi$. In particular we shall improve
Street's ball-box theorem
for maps $\Phi$ related to to families of  vector fields $\{Y_1, \dots, Y_q\}$ satisfying \eqref{intello},
where $Y_j$ and $c_{ij}^k\in C^1$.  This class is  larger than the class originally studied in \cite{Street}; see especially
the proof of Theorem~\ref{pallascatola}-(ii).
Then we show through a lifting argument that the map
$E_{I,x,r}$ is one-to-one as a consequence of the injectivity of  the map
$\Phi_{I,x,r}$.

Before closing this introduction, we mention some more recent papers where
nonsmooth vector fields are discussed. In \cite{SawyerWheeden},  
 diagonal vector fields are discussed deeply.
In the  H\"ormander case, in the model situation
of equiregular
 families of vector fields,
nonsmooth ball-box theorems have been studied by 
see \cite{KarmanovaVodopyanov,Greshnov,Manfredini}. Finally, \cite{BramantiBrandoliniPedroni2} contains a
nonsmooth lifting theorem.

The paper is organized as follows: In Section~\ref{preliminarmente} we give some preliminaries. In Section~\ref{distratto} 
we prove the ball-box theorem for our almost exponential maps~$E$. In Section~\ref{inusuale} we discuss the ball-box theorem for maps~$\Phi$ for vector fields in the class $\B_s$.
In Section \ref{Sttt} we discuss an approach to the problem for more regular vector fields.


\paragraph{Acknowledgments.  } We thank the referee, who  encouraged us to include the  discussion carried out  in Section 5.

\section{Preliminaries}
\label{preliminarmente}
\paragraph{General notation about constants.}
We denote by $C,
C_0, C_1,C_2 \dots$ large absolute constants. We denote instead by $t_0,
\e_0,\e_1 r_0,
\eta_0,\eta_1,\dots$ or $C^{-1}$ small absolute constant. We will specify
carefully along the
paper what the  constants we deal with  depend on, i.e. what ``absolute'' means.

\paragraph{Vector fields, orbits  and
the control distance.}
Consider a
family    of vector fields $\H=\{ X_1, \dots, X_m\}$
 and assume that   $X_j\in C^1(\R^n)$ for all $j$.
Write $X_j = :f_j\cdot\nabla$, where $f_j\colon\R^n\to
\R^n$.
The vector field $X_j$, evaluated at a point $x\in \R^n$, will be denoted
by  $X_{j,x}$ or $ X_j(x)$.
All the vector fields in this paper are always defined on  the whole
space $\R^n$.
Let 
\begin{equation*}
\begin{aligned}
\dcc (x,y) &:= \inf \big\{r>0: \text{ there is $\gamma\in
\Lip((0,1),\R^n)$
with $\g(0)=x, \g(1)= y$}
\\&
 \qquad\qquad  \qquad\text{and $\dot\gamma(t)\in
\big\{\textstyle\sum_{1\le j\le m} c_j rX_{j, \gamma(t)}: \abs{c}\le
1\big\}$
for a.e. $t\in [0,1]$} \big\}.
\end{aligned}
\end{equation*}
As usual, we call    \emph{Carnot--Carath\'eodory}  or
\emph{control} distance the
distance $\dcc$.

Given a fixed $s\ge 1$, denote by
$\cal{P}   := \{ Y_1, \dots, Y_q\} = \{ X_w: 1\le \abs{w} \le s\}$
the family of commutators of length at most $s$.  Let
$  \ell_j\le s$  be the length of $Y_j$ and
write $Y_j=:g_j\cdot\nabla$.
The distance associated  with $\P$ (where each $Y_j$ has degree $\ell_j$)
 will be
denoted by $\r$:
\begin{equation}\label{coscos}
\begin{aligned}
&\text{$\r(x,y)   := \inf \big\{  r\ge 0 :   $ there is $\gamma\in
\Lip((0,1), \R^n)$ such that $\gamma(0)=x$   }
\\&\quad \text{$\gamma(1) = y$ and $\dot\gamma(t) \in {\big\{ \textstyle{
\sum_{j=1}^q}} b_j r^{\ell_j}Y_j(\g(t)): \abs{b}\le 1\big\}$ for a.e. $t\in
[0,1]$}\big\}.
\end{aligned}
\end{equation}
We denote by  $B_\r(x,r)$, $B_{\textup{cc}}(x,r)$ and $B_\Eucl(x,r)$ the balls of center $x$ and radius $r$ with respect to $\r$, $\dcc$ and the Euclidean distance
respectively. We also denote for brevity $B_\Eucl(r):=B_\Eucl(0, r).$

\begin{definition}[Vector fields of class $\cal B_s$]
\label{jdover}   Let $\H = \{ X_1, \dots,
X_m\}$ be  vector fields in $\R^n$. We say that $\H$ is a family of class
$\cal{B}_s$ if   $X_j\in C_{\Eucl}^{s}$
for $j\in\{1,\dots,m\}$ and
moreover,   given any open bounded set $\Omega_0\subset\R^n$, there is
$C_1>0$ such that we may write for suitable functions $c_{ij}^k  $
\begin{align}\label{int}
[Y_i, Y_j]&  := (Y_i g_j - Y_j g_i)\cdot\nabla = \sum_{1\le k\le q}  c_{ij}^k
Y_k \quad
\text{where }
\\
 \label{int2}  \sup_{x\in \Omega_0}\abs{c_{ij}^k(x)} & \le
C_1\quad\text{for
all $i,j,k\in
\{1,\dots,q\}$;}
\end{align}
we require finally  that for all $i,j,k\in\{1,\dots,q\}$, $\mu\le n$, $x\in
\R^n$ and $I=
(i_1, \dots, i_\mu)\in \{1,\dots,q\}^\mu$, the map
\begin{equation}\label{giarango}
\Omega_{I, x}\ni (u_1, \dots ,  u_\mu)\mapsto
c_{ij}^k\Bigl(\exp\Bigl(\sum_{1\le \a\le \mu} u_\a Y_{i_\a}\Bigr)x\Bigr)
\end{equation}
is  $C^1_\Eucl$ smooth on the open set $\Omega_{I, x}\subset\R^\mu$ where it
is defined.
\end{definition}

\begin{remark}\label{valnaso}  
Class $\B_s$ is a subclass   of the class  $\A_s$  introduced in \cite{MontanariMorbidelli11d}.
More precisely, if a family $\H$ belongs to $\B_s$, then it belongs to $\A_s$ and the constants $L_0$ and $C_0$ in
\cite{MontanariMorbidelli11d} can be estimated by  $L_1 $ in~\eqref{int2}.
\end{remark}

\begin{remark}\begin{enumerate}[noitemsep,label=(\roman*)]
\item The
assumption $X_j\in C^{s}_\Eucl$ ensures that all the vector fields $Y_j$ are
$C^1_\Eucl$ smooth.
It is known  that if \eqref{int} and \eqref{int2} hold with  
$c_{ij}^k$   locally bounded,
then
any \emph{subunit orbit} 
\begin{equation}\label{sobonzo} 
\O^{x_0}_{\P,\textup{cc}}:=\{y\in\R^n: \dcc(x,y)<\infty\}                                                                                  \end{equation}  with topology $\t_{\dcc}$ is an immersed   $C^2$ submanifold and it is an integral
manifold of the distribution 
generated by  $\P$. Charts are described  in \eqref{lagiara}.  In the paper \cite{MontanariMorbidelli11b} we show a more general statement involving Lipschitz vector fields.

\item
Hypothesis \eqref{giarango}  leaves on the orbits $\O = \O_\cal{P}$ of the
family $\cal{P}= \{Y_1,
\dots, Y_q\}$ and it is ensured for instance by the assumption that
$c_{ij}^k\in
C^1_\O  $, i.e.~$C^1$ with respect  to the differential structure of each
orbit. 
\item

Observe also that  conditions \eqref{int} and \eqref{int2} scale
correctely.
Indeed, take a   family  $\H$  of $\cal{B}_s$ vector fields,   denote  $\wt Y_k := r^{\ell_k}Y_k$ for $k=1,\dots, q$ and  $r\in\left]0,1\right]$. Then there are    new $C^1$
functions  $\wh c_{jk}^i(x)$ and an algebraic constant $\wh C_1>0$ so that
$\abs{\wt Y_h \wh c_{jk}^i}\le C_1\abs{\wt Y_h c_{jk}^i} $ for all $i,j,k,h$ 
 and moreover for all $x\in\Omega_0$ we have
\begin{equation}
 \label{stop??}
\begin{aligned}
[\wt Y_j, \wt Y_k]& := [r^{\ell_j}Y_j, r^{\ell_k}  Y_{k}]= \sum_{i=1}^q
\wh c_{jk}^{i}
 \wt Y_i\quad\text{and}\quad
\abs{\wh c_{jk}^i}\le C_1+ \wh C_1.
\end{aligned}
\end{equation}
To see  \eqref{stop??}, 
if $\ell_j+\ell_k>s$, then let $\wh c_{jk}^{i}(x):=
r^{\ell_j+\ell_k-\ell_i}c_{jk}^i(x)$ and we are done.
If instead $\ell_j+\ell_k\leq s$, then the Jacobi identity
  shows that
there are algebraic  constants $a_{jk}^i$ such that $[Y_{j}, Y_k] =
\sum_{\ell_i=\ell_j+\ell_k} a_{jk}^i Y_i$.  Therefore \eqref{stop??} holds.

\end{enumerate}
\end{remark}

Given a family of $\cal {B}_s$ vector fields in $\R^n$ and
$\Omega\Subset\Omega_0\subset\R^n$ bounded sets, introduce
the constant
\begin{equation}\label{lippo}
L_1  : =\sum_{j=1}^m \sum_{ 0\le \abs{\a}\le s
                           }\sup_{\Omega_0}\abs{D^\a
f_j} + \sum_{i,j,k,\ell=1 }^q \big(\sup_{ \Omega_0 }
 \abs{c_{ij}^\ell }+ \sup_{ \Omega_0 }\abs{ Y_k c_{ij}^\ell} \big).
\end{equation}
In the remaining part of the paper we fix open bounded sets $\Omega\Subset\Omega_0\Subset\R^n$ and we consider points $x\in \Omega$ and radii $r\le r_0$ where $r_0$ is small enough to ensure that all balls $B_\r(x,r_0)$ are contained in $\Omega_0$ and that all points $E_{I,x,r}(h)$ and $\Phi_{I,x,r}(u)$  appearing in the paper  belong to $\Omega_0$.

\paragraph{Wedge products and $\eta$-maximality conditions.}
Next,  following \cite{Street}, we define some algebraic quantities  which we will   use below.
 Define for any $p,\mu\in \N$, with $1\le p\le \mu$,
$\I(p,\mu)  := \{I=(i_1, \dots, i_p): 1\le i_1<i_2< \cdots<i_p\le \mu\}  $.
  For each $x\in\R^n$ define
 $ p_x:= \dim  \Span   \{ Y_{j,x} : 1\le j\le q\}.$
Obviousely, $p_x\le \min\{n,q\}$. Then for any $p\in \{1,\dots,\min\{n,q\}\}$,
let
\begin{equation*}
 Y_{I,x} : = Y_{i_1,x}\wedge\cdots\wedge Y_{i_p,x}\in
{\textstyle\bigwedge}_p
T_x\R^n\sim {\textstyle\bigwedge}_p\R^n \quad\text{for all $I\in \I(p,q)$,}
\end{equation*}
and, for all $K\in \I(p,n)$ and $ I\in
\I(p,q)$
\begin{equation}\label{girocollo} 
\begin{aligned}
 Y_I^K(x)& : = dx^K(Y_{i_1}, \dots, Y_{i_p}) (x)
: = \det(g_{i_\a}^{k_\b})_{\a,\b=1,\dots, p}.
  \end{aligned}
\end{equation}
Here we let $dx^K:=dx^{k_1}\wedge \cdots \wedge d x^{k_p}$  for any
$K=(k_1,\dots, k_p)\in
\I(p,n)$.

The family $e_K:= e_{k_1}\wedge\cdots\wedge e_{k_p}$, where  $K\in\I(p,n)$,
gives an othonormal basis of $\bigwedge_p\R^n$, i.e. $\langle e_K, e_H\rangle
= \delta_{K,H}$ for all $K,H$. Then we  have the orthogonal decomposition
$
 Y_I(x)  =\sum_{K}Y_J^K(x) e_K\in {\bigwedge}_p \R^n
$, so that the number
\( |Y_I(x)| : =\bigl(\sum_{K\in \I(p, n)}Y_I^K(x)^2\bigr)^{1/2}  =
\abs{Y_{i_1}(x)\wedge\cdots\wedge Y_{i_p}(x)}
\)
gives the  $p$-dimensional volume  of the parallelepiped  generated by
$Y_{i_1}(x), \dots, Y_{i_p}(x)$.

Let  $I= (i_1, \dots, i_p)\in \I(p, q)$ such that $\abs{Y_I}\neq 0$. Consider
the linear system $\sum_{k=1}^p\xi^k
Y_{i_k}= W$, for some
$W\in\Span\{Y_{i_1}, \dots, Y_{i_p}\}$. The Cramer's
rule  gives the unique solution
\begin{equation}
\label{cromo} \xi^k = \frac{\langle Y_I, \iota^k(W)
Y_I\rangle}{\abs{Y_I}^2}\quad\text{for each $k=1,\dots, p$,}
\end{equation}
where we let $\iota^k_W Y_I:=
\iota^k(W)
Y_I:= Y_{(i_1,\dots, i_{k-1})}\wedge
 W\wedge Y_{(i_{k+1},\dots,i_p)}.$

Let    $r>0$.  Given $J\in \I(p,q)$, let $\ell(J):=
\ell_{j_1}+ \cdots + \ell_{j_p}$. Introduce the vector-valued function
\begin{equation*}
\begin{aligned}
\Lambda_p(x,r)& := \bigl(  Y_J^K (x)r^{\ell(J)} \bigr)_{{J\in \I(p, q)}, K\in
\I(p, n)}
=: \bigl( \wt Y_J^K (x) \bigr)_{{J\in \I(p, q),\,  K\in\I(p, n)}},
\end{aligned}
\end{equation*}
where we adopt the tilde notation $\wt Y_k : = r^{\ell_k}Y_k $ and its obvious
generalization for wedge products. Note that $\abs{\Lambda_p(x,r)}^2 =
\sum_{I\in \I(p, q)}
r^{2\ell(I)}\abs{Y_I(x)}^2$.

Finally, for each $A\subset\R^n$, put
\begin{equation}
\label{nu}
 \nu(A) : = \inf_{x\in  A }  \abs{
\Lambda_{p_x}(x,1)}. \end{equation}

\begin{definition}
 [$\eta$-maximality] Let $x\in\R^n$,  let $I\in \I(p_x,q)$ and $\eta\in (0,1)$.
We say
that $(I,x,r)$ is
$\eta$-maximal if
$\abs{Y_I (x)} r^{\ell (I)} >\eta \displaystyle \max_{J\in \I(p_x, q)} \abs{Y_J(x)}r^{\ell(J)}.
$
\end{definition}
Note that, if $(I,x,r)$ is a candidate to be  $\eta$-maximal with
 $I\in \I(p,q) $, then by definition it \emph{must} be    
$p=p_x$.


\paragraph{Approximate exponentials of commutators.} 
\label{capuozzo}
Let $w_1, \dots, w_\ell\in \{1,\dots,m\}$.
Given $t\in\R$, close to $0$,  define  the approximate exponential 
$\eap^{tX_{w_1 w_2\dots w_\ell}} :=  \expap(t X_{{w_1 w_2\dots w_\ell}})$ as 
in\cite{MontanariMorbidelli11d}   and see also
  \cite{NagelSteinWainger,MM}.
By standard ODE theory,    there is $t_0$  depending on $\ell,  \Omega$,
$\Omega_0$,  $ \sup\abs{ f_j } $ and  $\sup\abs{\nabla f_j}  $     such that
$\exp_*(t X_{{w_1 w_2\dots w_\ell}})x$ is well defined for any $x\in
 \Omega$ and $|t|\le t_0$.
   Define, given $I= (i_1,\dots,i_p)\in\{1,\dots,q\}^p $,   $x\in \Omega$ and
$h\in\R^p$, with
$|h|\le C^{-1}$
     \begin{equation}
\begin{aligned} \label{hhh}E_{I,x}(h)& :=\expap(h_1 Y_{i_1})\cdots
\expap(h_p
Y_{i_p})(x)
\\
\bigl\|h\bigr\|_I & : =\max_{j=1,\dots,p}|h_j|^{1/\ell_{i_j} }\qquad   Q_I(r):
=\{h\in\R^p:\norm{h}_I < r\}.
\end{aligned}\end{equation}

Recall the following result.

\begin{theorem}[{\cite[Theorem~3.11]{MontanariMorbidelli11d}}]\label{deduciamo}
Let $\H$ be a $\B_s$ family.  Let $x\in \Omega$ and let $r\in (0,r_0)$.
Fix $p\in\{1,\dots,q\}$ and $I\in \I(p, q)$. Then the function $E_{I,x,r}$ is $C^1$ smooth on
$B_\Eucl(C^{-1})$.
Moreover, for all $h\in B_\Eucl(C^{-1})$ and for any $k\in\{1,\dots,p\}$
we have $E_*(\p_{h_k})\in P_{E(h)}$ and we can write
\begin{equation}\label{nhb}
E_*(\p_{h_k}) = \wt U_{k,E(h)}  +\sum_{\ell_j=d_k+1}^{s} a^j_k(h)
\wt Y_{j, E(h)}
+ \sum_{i=1}^q  \omega_k^i(x,h)\wt Y_{i,E(h)},
\end{equation}
where, for some $C>1$ we have
\begin{align}\label{sogliola}
       \abs{a_k^j (h)}& \le C \bigl\|h \bigr\|_I^{\ell_j-d_k}\quad\text{for all
$h\in B_\Eucl(C^{-1})$}
\\ \label{merluzzo}  \abs{\omega_i(x,h)} &\le C \bigl\|h \bigr\|_I^{s+1-  d_k}
\quad\text{for all $h\in B_\Eucl(C^{-1})\quad x\in\Omega$}.
\end{align}
\end{theorem}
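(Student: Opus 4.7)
The plan is to derive the expansion \eqref{nhb} by differentiating the composition $E = \expap(h_1 \wt Y_{i_1})\circ\cdots\circ\expap(h_p \wt Y_{i_p})(x)$ with respect to $h_k$, then transporting the resulting tangent vector backward through the $k-1$ preceding approximate exponentials, and re-expressing it at each stage in the basis $\{\wt Y_j\}_{j=1}^q$ by means of the rescaled involutivity \eqref{stop??}. Throughout, the scaling convention $\wt Y_k = r^{\ell_k}Y_k$ matters only to the extent that the structure constants remain uniformly bounded by $L_1+\wh C_1$, so the proof is genuinely at unit scale.

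First I fix $k\in\{1,\dots,p\}$ and set $z:=\expap(h_{k+1}\wt Y_{i_{k+1}})\circ\cdots\circ\expap(h_p\wt Y_{i_p})(x)$. Since $h_k$ appears only in the $k$-th factor from the left, the chain rule gives
\[
\partial_{h_k}E(h)=\bigl(\expap(h_1\wt Y_{i_1})\circ\cdots\circ\expap(h_{k-1}\wt Y_{i_{k-1}})\bigr)_*\bigl(\partial_{h_k}\expap(h_k\wt Y_{i_k})(z)\bigr).
\]
By the construction of $\expap$ in \cite{NagelSteinWainger,MM,MontanariMorbidelli11d}, the inner derivative equals $\wt Y_{i_k}$ evaluated at the current point, modulo a residual of intrinsic weight $s+1$ which is, by design, a combination of $\wt Y_i$'s; this residual feeds directly into the term controlled by \eqref{merluzzo}.

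The core of the argument is then the computation of the pushforward of $\wt Y_{i_k}$ through each $\expap(h_j\wt Y_{i_j})$ with $j<k$. Setting $V(t):=(\expap(t\wt Y_{i_j}))_*\wt Y_{i_k}$ and differentiating in $t$ produces an ODE whose right-hand side is essentially $[\wt Y_{i_j},V]$. Writing $V(t)=\sum_i v^i(t)\wt Y_i$ and invoking \eqref{stop??}, this becomes a closed linear system $\dot v=A(t)v$ with coefficients bounded independently of $r$. Solving by Gronwall and iterating for $j=k-1,k-2,\dots,1$ gives a representation $\partial_{h_k}E(h)=\sum_i b_k^i(h)\wt Y_{i,E(h)}$ in which the coefficient attached to $\wt Y_{i_k}$ is close to $1$; together with the bounded mixings produced along the transport, this supplies the leading vector $\wt U_k$. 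A careful bookkeeping of weights, based on the fact that each commutator of $\wt Y_{i_j}$ with something of weight $d_k=\ell_{i_k}$ produces an object of weight $d_k+\ell_{i_j}$ while gaining a factor $h_j$ of size $\|h\|_I^{\ell_{i_j}}$, yields the bound $|a_k^j(h)|\lesssim \|h\|_I^{\ell_j-d_k}$ of \eqref{sogliola}; the tail beyond weight $s$ and the residual from the approximate nature of $\expap$ are both absorbed into $\omega_k^i$ with the bound \eqref{merluzzo}.

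The main obstacle is running this Gronwall argument under the minimal $\B_s$ regularity, since \eqref{giarango} provides $C^1$ smoothness of the $c_{ij}^k$ only along the orbits $\O_\P$, not in transverse directions. The entire transport computation must therefore be interpreted intrinsically on the orbit through $x$: the statement that $E_*(\partial_{h_k})\in P_{E(h)}$ is what allows both sides of the transport ODE to live in the tangent bundle to the orbit, so that only orbit-tangential derivatives of the $c_{ij}^k$ are ever required. A second delicate point is to verify that the error from $\expap$ being approximate, rather than exact, enters only through the $\omega_k^i$ term and does not contaminate the intermediate coefficients $a_k^j$; this is precisely why $\expap$ is defined in \cite{NagelSteinWainger,MM} as a finite composition of true horizontal exponentials calibrated to cancel every commutator up to length $s$, which guarantees that the BCH-type remainder is of order $\|h\|_I^{s+1-d_k}$ rather than of any lower order.
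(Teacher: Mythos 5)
The paper does not contain a proof of this theorem: it is quoted verbatim from \cite{MontanariMorbidelli11d} (Theorem~3.11 there), so there is no in-text argument to compare against. Your sketch does capture the global shape of the argument used in that line of work---push the derivative $\p_{h_k}E$ back through the preceding factors, close the resulting transport ODE with the involutivity \eqref{stop??}, control via Gronwall, and keep track of weights. But as written, the sketch has two genuine gaps.

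First, the step ``Setting $V(t):=(\expap(t\wt Y_{i_j}))_*\wt Y_{i_k}$ and differentiating in $t$ produces an ODE whose right-hand side is essentially $[\wt Y_{i_j},V]$'' is not valid. The identity $\frac{d}{dt}(\phi_t)_*V=-(\phi_t)_*[\,\text{generator},V\,]$ presupposes the flow property $\phi_{t+s}=\phi_t\circ\phi_s$, which $\expap(\cdot\,\wt Y_{i_j})$ emphatically does not have: it is a fixed finite word in the \emph{exact} exponentials $\exp(\pm|t|^{1/\ell_{i_j}}X_\a)$ of the horizontal fields, and $\expap((t+s)\wt Y_{i_j})\neq \expap(t\wt Y_{i_j})\circ\expap(s\wt Y_{i_j})$. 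The correct version is to transport factor by factor: for each exact $\exp(\tau X_\a)$ the adjoint ODE is legitimate, and then one expresses $[X_\a,\cdot]$ in the $\{\wt Y_i\}$ basis using \eqref{stop??} (recall each $X_\a$ is among the $Y_i$). Your one-line ODE collapses several such steps and hides the bookkeeping needed for \eqref{sogliola}. A related overclaim is that the residual from $\expap$ being approximate is ``by design a combination of $\wt Y_i$'s'': the design only guarantees cancellation of low-order commutators, while writing the residual vector as a $C^0$ combination of the $\wt Y_i$'s already requires the $s$-involutivity/integral-manifold structure and is itself part of what must be proved.

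Second, the claimed $C^1$ smoothness of $E_{I,x,r}$ on $B_\Eucl(C^{-1})$ is essentially not addressed. This is the delicate point: each factor involves fractional powers $|h_j|^{1/\ell_{i_j}}$, whose bare derivative blows up as $h_j\to 0$; the paper's own footnote gives an example where $h\mapsto\expap(hY)$ is only $C^{1,1/2}$, showing that $C^1$ regularity at $h_j=0$ is a borderline cancellation phenomenon, not an automatic consequence of smoothness of the vector fields. Your ``the proof is genuinely at unit scale'' observation is about the \emph{constants}, not about differentiability, and does not substitute for the argument (a Taylor-type expansion with uniform continuity of the remainder) needed to show that $\p_{h_k}E$ extends continuously across the hyperplanes $\{h_j=0\}$. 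Until that is supplied, the formula \eqref{nhb} and the bounds \eqref{sogliola}--\eqref{merluzzo} are only established away from those hyperplanes.
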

The proof of Thoerem \ref{deduciamo} in  
\cite{MontanariMorbidelli11d} involves the more general class $\A_s$ and constants in that paper depend on the data $C_0$ and $L_0$ there. Therefore, in view of  Remark \ref{valnaso}, constants in Theorem \ref{deduciamo} depend quantitatively on $L_1$ in \eqref{lippo}.


\paragraph{Gronwall's inequality.}
We shall refer several times to the
following standard fact:
for all $a\ge 0$, $b>0$,
$T>0$ and $f$ continuous on $[0,T]$,
\begin{equation}\label{grroo} 
 0\le f(t)\le a t+ b\int_0^t f(\t) d\t \quad\text{on
$t\in [0, T]$}\quad\Rightarrow
\quad f(t)\le \frac{a}{b} (e^{b t}-1)\quad\text{on  $t\in[0,T]$.}
\end{equation}

\section{Ball-box theorem for almost exponential maps}
\label{distratto}   In this section we prove the ball-box theorem for our almost
exponential  maps $E$ associated with a family  $\H = \{X_1, \dots,
X_m\}$  of  vector fields of class $\B_s$.  
Given   $I\in\I(p,q)$ and $r>0$, we denote as usual  $\wt U_j:  = r^{d_j}U_j : =
r^{\ell_{i_j}}Y_{i_j}$
and
$ E_{I,x,r}(h) : = E(h)= \eap^{h_1 \wt U_1}\cdots
 \eap^{h_p \wt U_p} x$.
Moreover, $Q_I(r)$ denotes the associated box (see \eqref{hhh}). Finally recall
that we use the notation $\abs{\chi}$ to denote the operator norm of any matrix
$\chi$ with real elements.

\begin{theorem}\label{ho1}
Let $\H$ be a family of $\B_s$ vector fields. If $(I,x,r)$ is $\frac
12$-maximal, $x\in\Omega$ and $r< r_0$,
then there are  $C_2>1$ and $
\e_0<1$ such that, for all $j=1,\dots, p:= p_x$,
\begin{equation}\label{bonfo}
   E_*(\p_{h_j})= \wt U_{j,E(h)} + \sum_{1\le k\le p} \chi_j^k(h)
\wt U_{k,E(h)}
\quad\text{for all $h\in Q_I (\e_0)$,}
\end{equation}
where $\chi\in C^0_\Eucl(Q_I(\e_0) , \R^{p\times p})$ satisfies
\begin{equation}\label{dichi}
 |\chi (h)|\le C_2\norm{h}_I\qquad \text{if  $\norm{h}_I\le \e_0 $.}
\end{equation}
\end{theorem}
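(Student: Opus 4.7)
The starting point is the expansion \eqref{nhb} provided by Theorem \ref{deduciamo}, which already gives $E_*(\p_{h_k})$ as $\wt U_{k,E(h)}$ plus an error written in the full basis $\{\wt Y_{j,E(h)}\}_{j=1}^q$. The plan is to rewrite each $\wt Y_{j,E(h)}$ as a linear combination of the $p$ selected basis vectors $\wt U_{m,E(h)}$, $m=1,\ldots,p$, and then absorb the error into $\chi_k^m$. For this rewriting to make sense, two ingredients are needed: first, $P_{E(h)}$ must be exactly $p$-dimensional, which follows from the integrability condition \eqref{int}, since the subunit orbit $\O^{x}_{\P,\cc}$ is an immersed $p$-dimensional submanifold and $E(h)\in\O^x_\H\subseteq\O^x_{\P,\cc}$ because $E(h)$ is obtained from $x$ by a concatenation of flows of horizontal fields $X_j\in\H$; second, the family $\{\wt U_{m,E(h)}\}_{m=1}^p$ must remain a \emph{quantitatively} stable basis of $P_{E(h)}$, which I reduce to the transport of the $\tfrac12$-maximality from $x$ to $E(h)$.

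The central step, and the main obstacle, is to prove: for $\e_0$ small enough depending only on $L_1$ in \eqref{lippo}, the triple $(I,E(h),r)$ is $\tfrac14$-maximal for every $h\in Q_I(\e_0)$. A naive attack via Lipschitz continuity of $Y_J$ combined with the crude bound $|E(h)-x|\le Cr\|h\|_I$ would only yield $|\wt Y_J(E(h))-\wt Y_J(x)|\le Cr^{\ell(J)+1}\|h\|_I$, which is \emph{not} comparable to $|\wt Y_J(x)|$ once $|Y_I(x)|$ is small --- a genuine issue, since Theorem \ref{pocaro} explicitly allows $\nu(\Omega)=0$. The correct argument exploits the Jacobi/integrability structure: along each horizontal flow $\phi_t=\exp(tX_j)$ one has
\[
\p_t(\phi_t^*Y_k)=\phi_t^*[X_j,Y_k]=\sum_{\ell=1}^q \bigl(c_{jk}^\ell\circ\phi_t\bigr)\,\phi_t^*Y_\ell,
\]
a linear ODE in the $Y_\ell$-basis whose coefficient matrix is uniformly bounded by $L_1$ \emph{independently} of $\nu$. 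Iterating this over the horizontal flows that compose each factor $\expap(h_\a \wt U_\a)$ (each such flow acting over time $\le Cr|h_\a|^{1/d_\a}\le Cr\|h\|_I$, with a bounded number of factors) expresses $\wt Y_J(E(h))$ as the image of $\wt Y_J(x)$ under a linear map on $\bigwedge_p\R^n$ differing from the identity by $O(r\|h\|_I)$ in operator norm. This preserves every ratio $|\wt Y_I|/|\wt Y_J|$ up to a universal constant, giving the desired $\tfrac14$-maximality at $E(h)$.

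Granted this, I apply Cramer's rule \eqref{cromo} in $P_{E(h)}$ with $W=\wt Y_i$ to obtain $\wt Y_{i,E(h)}=\sum_{m=1}^p \eta_i^m(E(h))\,\wt U_{m,E(h)}$ where
\[
\eta_i^m(E(h))\;=\;\frac{\langle \wt U_I,\,\iota^m(\wt Y_i)\wt U_I\rangle(E(h))}{|\wt U_I(E(h))|^2}.
\]
Since $\iota^m(\wt Y_i)\wt U_I=\pm\,\wt Y_{J_i}$ for the $p$-tuple $J_i$ obtained from $I$ by replacing $i_m$ with $i$ (and vanishes when $i\in I\setminus\{i_m\}$), the Cauchy--Schwarz inequality combined with $\tfrac14$-maximality at $E(h)$ yield $|\eta_i^m(E(h))|\le 4$. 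Substituting into \eqref{nhb} and collecting the coefficient of $\wt U_{m,E(h)}$ gives \eqref{bonfo} with
\[
\chi_k^m(h)\;=\;\sum_{\ell_j=d_k+1}^{s} a_k^j(h)\,\eta_j^m(E(h))\;+\;\sum_{i=1}^{q}\omega_k^i(x,h)\,\eta_i^m(E(h)).
\]

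The bound \eqref{dichi} is then immediate: by \eqref{sogliola} each $|a_k^j(h)|\le C\|h\|_I^{\ell_j-d_k}\le C\|h\|_I$ because $\ell_j\ge d_k+1$, and by \eqref{merluzzo} each $|\omega_k^i(x,h)|\le C\|h\|_I^{s+1-d_k}\le C\|h\|_I$ because $d_k\le s$; combined with $|\eta|\le 4$ this gives $|\chi_k^m(h)|\le C_2\|h\|_I$. Continuity of $\chi$ on $Q_I(\e_0)$ follows because $E$ is $C^1$, the functions $a_k^j(h)$ and $\omega_k^i(x,h)$ are continuous in $h$ (as byproducts of the construction underlying Theorem \ref{deduciamo}), and $|\wt U_I(E(h))|$ is bounded away from zero uniformly in $h$ by the transported maximality.
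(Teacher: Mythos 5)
Your overall strategy matches the paper's: expand $E_*(\p_{h_k})$ via \eqref{nhb}, rewrite each $\wt Y_{j,E(h)}$ in the selected basis $\{\wt U_{\a,E(h)}\}_{\a\le p}$, and absorb the resulting coefficients into $\chi$; the bound \eqref{dichi} then drops out of \eqref{sogliola}--\eqref{merluzzo}. The published proof is much shorter than yours only because it outsources the crucial uniform bound $|b_i^\alpha|\le C$ on the change-of-basis coefficients to \cite[Remark~3.3]{MontanariMorbidelli11d}. You correctly recognized that this is where the real content lies --- one must transport the $\tfrac12$-maximality from $x$ to $E(h)$ uniformly in $\nu$ --- and reconstructed the missing lemma. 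Your reconstruction is morally right but has a scaling slip you should repair: the ODE you write, $\p_t(\phi_t^*Y_k)=\sum_\ell(c_{jk}^\ell\circ\phi_t)\phi_t^*Y_\ell$, is for the \emph{unscaled} $Y_\ell$, and passing from an $O(r\|h\|_I)$-perturbation there to an $O(r\|h\|_I)$-perturbation on the \emph{scaled} $\wt Y_J=r^{\ell(J)}Y_J$ is not free, because the change of basis introduces factors $r^{\ell_k-\ell_\ell}$ that can be as large as $r^{-(s-1)}$. The clean route is to write the ODE directly for $\phi_t^*\wt Y_k$: when $\ell_k<s$ the right-hand side is $r^{-1}\phi_t^*\wt Y_\mu$ (since $[X_j,Y_k]$ is itself a length-$(\ell_k{+}1)$ commutator $Y_\mu\in\P$), and when $\ell_k=s$ it is $\sum_\ell \wh c_{jk}^\ell\,\phi_t^*\wt Y_\ell$ with $\wh c$ bounded as in \eqref{stop??}; integrating the $O(r^{-1})$ coefficient over the horizontal time $O(r\|h\|_I)$ then yields a perturbation of size $O(\|h\|_I)$ --- not $O(r\|h\|_I)$ --- which is still small enough once $\|h\|_I\le\e_0$. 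One should also keep the outer factor $\Lambda_p\,d\phi_T=I_p+O(r\|h\|_I)$ explicit rather than folding it into ``a linear map on $\bigwedge_p\R^n$''. Finally, for the continuity of $\chi$ the paper avoids asserting continuity of $a_k^j$ and $\omega_k^i$ (which Theorem \ref{deduciamo} does not state explicitly): it notes that $h\mapsto E_*(\p_{h_k})$ and $h\mapsto \wt U_j(E(h))$ are continuous and solves for $\chi$ via Cramer's rule \eqref{cromo}, which is the safer argument and worth adopting.
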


Note that Theorem \ref{ho1} \emph{does not require} any positive lower bound on the number
$\nu$
defined in~\eqref{nu}. Moreover it holds for the more general class $\A_s$ in 
\cite{MontanariMorbidelli11d} and the  constants $C_2$,   $\e_0$ and
$r_0$
depend quantitatively on $L_0$ and $C_0$ introduced in that paper, and ultimately---in view ol Remark~\ref{valnaso}---on $L_1$.

For future reference, we write \eqref{bonfo} in
 matricial form as
\begin{equation}\label{jojo}
 dE(h)= [\wt Y_{i_1, E(h)},\dots,  \wt Y_{i_p, E(h)}] \,[I_p+\chi(h)]=:[\wt
Y_{I, E(h)}]\,[I_p+\chi(h)].
\end{equation}

\begin{proof}[Proof of Theorem \ref{ho1}]
It follows immediately from Theorem \ref{deduciamo}. Indeed, starting
from \eqref{nhb} and applying  
\cite[Remark~3.3]{MontanariMorbidelli11d} with $\eta= \frac{1}{2}$, we get

\begin{equation*}
\begin{aligned}
 E_*(\p_{h_k}) & = \wt U_{k, E}+ \sum_{\ell_k+1\le \ell_j\le s
}a_k^j(h)\sum_{\a=1}^p b_j^\a \wt U_{\a,E}+\sum_{1\le i\le q}\omega_k^i(x,h)
\sum_{1\le \a\le p}b_i^\a \wt U_{\a,E}
\\& = : \wt U_{k, E} + \sum_{1\le j\le p}\chi_k^j(h) \wt U_{j,E},
\end{aligned}
\end{equation*}
where $E= E(h)$ and, by \cite[Remark~3.3]{MontanariMorbidelli11d}, we have $\abs{b_i^\alpha}\le C$.
 The coefficients $\chi_j^k$ are unique by
the linear independence
of the $\wt
U_{j,E}$. Moreover, since in Theorem   \cite[Theorem~3.11]{MontanariMorbidelli11d}
 we have proved that
$h\mapsto E_*(\p_{h_k})$ is continuous and by assumption $\B_s$ we
know that  the maps $h\mapsto U_j(E(h))$ are continuous, then, the Cramer's rule
\eqref{cromo} shows that $\chi$ is continuous. Finally
estimate \eqref{dichi} follows from the inequality $\abs{a_k^j(h)} + \abs{\omega_k^i(x,h)}\le
C\norm{h}$; see \eqref{sogliola} and \eqref{merluzzo}.
\end{proof}

Next we  discuss  our   ball-box theorem
 in the   class
$\B_s$.

\begin{theorem}\label{punturina}
Let $\H$ be a family of $\B_s$ vector fields.
Then there are $\e_0,\e_1>0$ and $C_2>0$   such that  
for any
  $\frac 12$-maximal triple $(I,x,r)$ with $x\in \Omega$, $I\in \I(p_x,q)$ and
$r\in (0, r_0)$
\begin{itemize*} \item[(a)] for any 
   $\e\leq \e_0$ we have 
\begin{equation}\label{moma}
 E_{I,x,r}(Q_I(\e))\supset  B_\rho (x, C_2^{-1} \e^s r);
\end{equation}
\item [(b)]
the map  $E_{I, x, r}$ is one-to-one on the set
$Q_I(\e_1)$.
\end{itemize*}

\end{theorem}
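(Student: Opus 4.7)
The two assertions need rather different techniques. For (a) I will lift $\r$-subunit paths through $E_{I,x,r}$ and control the lift via Gronwall, using the Jacobian factorization \eqref{jojo}. For (b) I will reduce injectivity of $E_{I,x,r}$ to the corresponding injectivity of the standard exponential map $\Phi_{I,x,r}$ proved later in the paper (Theorem~\ref{pallascatola}).

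For (a), fix $y\in B_\r(x,C_2^{-1}\e^s r)$ and join $x$ to $y$ by a Lipschitz path $\g\colon[0,1]\to\R^n$ with $\dot\g(t)=\sum_{j=1}^q b_j(t)(C_2^{-1}\e^s)^{\ell_j}\wt Y_j(\g(t))$ and $|b|\le 1$. The goal is to produce $h\in C([0,1],\R^p)$ with $h(0)=0$, $E_{I,x,r}(h(t))=\g(t)$ and $\|h(t)\|_I<\e$. By the $\tfrac{1}{2}$-maximality of $(I,x,r)$ and the continuity of $y\mapsto|\wt Y_J(y)|$, the triple $(I,E(h),r)$ remains (say) $\tfrac{1}{4}$-maximal throughout $Q_I(\e_0)$ after shrinking $\e_0$ if needed. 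Cramer's rule \eqref{cromo} then yields
\begin{equation*}
\wt Y_j(E(h))=\sum_{k=1}^p\xi_j^k(h)\,\wt U_k(E(h)),\qquad |\xi_j^k(h)|\le C,
\end{equation*}
uniformly on $Q_I(\e_0)$. Expanding $\dot\g$ in the frame $\wt U_k(E(h))$ then gives coefficients $c_k(h,t)$ bounded by $C'C_2^{-1}\e^s$, since each term carries an $(C_2^{-1}\e^s)^{\ell_j}\le C_2^{-1}\e^s$.

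I then set up the lift via the ODE $\dot h(t)=[I_p+\chi(h(t))]^{-1}c(h(t),t)$ with $h(0)=0$. The right-hand side is continuous on $Q_I(\e_0)$ and $I_p+\chi$ is invertible there by \eqref{dichi}, so Peano's theorem produces a local solution along which $E_{I,x,r}(h(t))=\g(t)$ by construction. The bound $|\dot h_k|\le 2C'C_2^{-1}\e^s$ integrates to $|h_k(t)|\le 2C'C_2^{-1}\e^s\le 2C'C_2^{-1}\e^{\ell_{i_k}}$ since $\ell_{i_k}\le s$ and $\e\le 1$; choosing $C_2$ so that $2C'/C_2<1$ forces $\|h(t)\|_I<\e$ along the whole lift. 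Since $dE$ is a linear isomorphism at each point of $Q_I(\e_0)$ by Theorem~\ref{ho1}, an open--closed continuation argument in $t\in[0,1]$ extends the lift to the full interval, yielding $y=E_{I,x,r}(h(1))\in E_{I,x,r}(Q_I(\e))$.

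For (b), both $E_{I,x,r}$ and $\Phi_{I,x,r}$ are $C^1$ with Jacobians of the form $[\wt Y_I]\cdot[I_p+\text{small}]$; part (a) and the analogous inclusion for $\Phi_{I,x,r}$ in Theorem~\ref{pallascatola} show that, after shrinking $\e_1$, the image $E_{I,x,r}(Q_I(\e_1))$ lies inside a box on which $\Phi_{I,x,r}$ is a $C^1$ diffeomorphism onto its image. Defining $\Psi:=\Phi_{I,x,r}^{-1}\circ E_{I,x,r}$ on $Q_I(\e_1)$, the Jacobian formulas force $d\Psi$ to be everywhere nondegenerate, and injectivity of $\Phi_{I,x,r}$ on its domain of definition reduces injectivity of $E_{I,x,r}=\Phi_{I,x,r}\circ\Psi$ to that of the smaller map $\Psi$; one concludes via a monodromy/lifting argument that $\Psi$, a local diffeomorphism between two starlike parameter boxes fixing the origin, is globally injective. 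I expect (b) to be the real obstacle, exactly for the reason the authors stress in the introduction: the classical Tao--Wright proof that $\Phi$ is one-to-one depends on an explicit ODE analysis of $\det d\Phi$ along radial lines in parameter space that does not transfer to the approximate exponentials composing $E$, which is precisely why the reduction via the comparison map $\Psi$ (and hence the full force of Theorem~\ref{pallascatola}) is needed.
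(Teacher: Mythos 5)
Part (a): your approach is essentially the same as the paper's Lemma~\ref{liftato}, with one genuinely nice variation. The paper first invokes Lemma~\ref{chop} to establish that a $\rho$-subunit path starting at $x$ cannot leave the Sussmann orbit $\O_\H$ (an issue the authors stress in Remark~\ref{picchettina}), and only then sets up the lift by locally inverting $E$; you instead solve the lifting ODE $\dot h=[I_p+\chi(h)]^{-1}c(h,t)$ directly, and the identity $E(h(t))=\gamma(t)$ follows a posteriori from ODE uniqueness for $\dot z=\sum_j b_j(t)(C_2^{-1}\e^s)^{\ell_j}\wt Y_j(z)$. This implicitly supplies the ``path stays in the orbit'' statement, so the two routes reach the same place. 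Two small caveats: you lean without proof on the propagation of $\frac14$-maximality to points $E(h)$, $h\in Q_I(\e_0)$ (the paper's analogue is the Gronwall estimate~\eqref{kinder}), and your ODE has only a continuous right-hand side in $h$, so Peano/Carath\'eodory gives existence but not uniqueness of the lift itself --- you must argue, as the paper does, that the continuation is well-defined because $E$ is a local diffeomorphism.

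Part (b) has a genuine gap. After reducing to the map $\Psi:=\Phi_{I,x,r}^{-1}\circ E_{I,x,r}$, you assert that a local diffeomorphism between two starlike parameter boxes fixing the origin is globally injective by a ``monodromy/lifting argument.'' This is false as stated: the complex exponential $(x,y)\mapsto(e^x\cos y, e^x\sin y)$ on a convex strip of vertical width $>2\pi$ is a local diffeomorphism between simply-connected domains fixing a point, yet is not injective. Simple connectedness of the source and nonvanishing Jacobian give nothing without an extra quantitative hypothesis (properness, a covering-map property, or a bi-Lipschitz estimate). The paper's Proposition~\ref{loft} supplies exactly the missing ingredient: it constructs the inverse map $\theta$ satisfying $E\circ\theta=\Phi$ via a radial continuation argument (Steps~1--4), and --- crucially --- derives the global bound $|d\theta(u)-I_p|\le C_3|u|^{1/s}\le\tfrac12$ from the combined Jacobian factorizations \eqref{jojo} and \eqref{mutti} together with the smallness constraint \eqref{rich}. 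That estimate makes $\theta$ globally $2$-bi-Lipschitz on $B_\Eucl(\eta_3)$, hence injective with image containing $B_\Eucl(\eta_3/2)$, and only then does injectivity of $\Phi$ (Theorem~\ref{pallascatola}(ii)) transfer to $E=\Phi\circ\theta^{-1}$. Your write-up correctly identifies the pieces ($\Phi$ injective, compare $E$ to $\Phi$) but omits the bi-Lipschitz estimate that is the actual engine of the proof; without it, (b) does not follow.

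Two further, minor gaps in (b): you need the inclusion $E(Q_I(\e_1))\subset\Phi(B_\Eucl(\eta_1))$ so that $\Phi^{-1}$ is even defined on the relevant range, which the paper engineers via the chain of inclusions surrounding~\eqref{etto}; and you need to know that $\Phi^{-1}$ is $C^1$ with respect to the $C^2$ manifold structure on the orbit described by~\eqref{lagiara}, which is not automatic under the low regularity assumed here. Both points are handled in the paper but not in your outline.
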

The proof of inclusion \eqref{moma} will be shown in Lemma \ref{liftato}.  The proof of the injectivity statement will be given later, after some more
work. See page~\pageref{superpuntura}. 
Note that in Theorem~\ref{punturina}, all   constants $\e_0, C_2, r_0$ only  depend quantitatively
on $L_1$ in
\eqref{lippo} and there are no problems even if  the infimum
$\nu(\Omega)$ in
\eqref{nu} is zero.

\begin{remark}\label{picchettina}  Concerning Theorem   \ref{punturina}-(a) note the following aspects. 

\begin{itemize*}
\item[(i)] Inclusion \eqref{moma}   ensures that $B_\r(x, r)\subset \O_\H$. 
We have shown  in   \cite{MontanariMorbidelli11d} that on the orbit    $\O_\H$
there is a topology $\t(\U)$ with basis $\U$ (see \eqref{gasper}), such that
$(\O_\H, \t(\U))$ is a $C^1$  submanifold  and 
 $T_{x}\O_\H= P_x$ for all $x$.\footnote{
Note that even if the vector fields are smooth, maps of the form $E_{I,x}$ are generically
 not much regular. For example, given the smooth vector fields $X_1 = \p_1$
 and $X_2 = (x_1 + x_1^2)\p_2$, then the map 
$
      h\mapsto \expap(h[X_{1}, X_2](0,0) =(0, h+h\abs{h}^{1/2})
$
is $C^{1,1/2}$ only; see  \cite[Example~5.7]{MM}.}
 Therefore, to show inclusion \eqref{moma} one must first  give a rigorous proof of the fact that  
a subunit path of the family $\P$ starting from $x\in\O_\H$ should stay in $\O_\H$
for  $t$ close to zero. We prove this   statement   in Lemma \ref{chop} where we show that
  the  subunit orbit~$\O_{\P,\textup{cc}}$
 of the commutators (see \eqref{sobonzo}) coincides with
the Sussmann's  orbit~$\O_\H$  of the original vector fields.

\item[(ii)] Note also that  \eqref{moma}
implies the Fefferman--Phong-type local  inclusion
 $B_d(x, r)\supseteq B_{\O}(x, C^{-1}r^s),$
where $B_\O$
denotes the geodesic ball on $\O$. Here $x$ belongs to a compact set and $r$ is
small enough: see \cite{FeffermanPhong81}. 
Therefore   the
  topology $\t_{\dcc} $   on $\O:=\O_\H = \O_{\P, \cc}$ is equivalent to the
topology defined by the metric $\r$ and both are equivalent
to the topology associated with the geodesic Riemannian  distance provided by
the first fundamental form of $\O$. 
\end{itemize*} \end{remark}

The main application of the results in this section is the following. 
\begin{corollary}\label{poincuss}
   Let $\H = \{X_1, \dots, X_m\}$ be a family of $\B_s$ vector fields in $\R^n$.
Then for any bounded open set $\Omega\subset\R^n$,  there is $C>1$
depending on
$L_1$ in \eqref{lippo} such that for any  $x\in \Omega$ and $r\in
(0, C^{-1}]$, letting $p:= p_x$, we have
\begin{align*}
 \sigma^p(B_{\textup{cc}}(x, 2r)) & \le C\sigma^p(B_{\textup{cc}}(x, r))\quad\text{and}
\\
 \int_{B_{\textup{cc}}(x,r)}\abs{f(y) - f_{B_{\textup{cc}}(x,r)}} d\s^p(y)
& \le
C\sum_{j=1}^m\int_{B_{\textup{cc}}(x, r)} \abs{ rX_jf(y)} d\s^p(y).
\end{align*}
\end{corollary}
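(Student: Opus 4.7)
The plan is to derive the corollary from the ball-box theorem (Theorem \ref{punturina}) combined with the Jacobian estimate of Theorem \ref{ho1}, following the Nagel--Stein--Wainger scheme for the doubling property and Jerison's factorization scheme for the Poincar\'e inequality, the latter adapted to the almost exponential setting as in \cite{LM,MM04,MM}.

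\emph{Step 1 (size of control balls).} Given $x\in\Omega$ and $r\in(0,r_0)$, put $p:=p_x$ and choose $I\in\I(p,q)$ maximizing $|Y_J(x)|r^{\ell(J)}$, so that $(I,x,r)$ is $\frac{1}{2}$-maximal. Combining Theorem \ref{punturina}(a) with the trivial inclusion $B_{\cc}\subset B_\r$ gives $E_{I,x,r}(Q_I(\e_0))\supseteq B_{\cc}(x,C^{-1}r)$, while unfolding each factor $\eap^{h_j\wt U_j}$ into a bounded composition of basic exponentials of the $X_k$'s at time of order $r$ yields $E_{I,x,r}(Q_I(\e_0))\subseteq B_{\cc}(x,Cr)$. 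The Jacobian identity \eqref{jojo}, together with $|\chi(h)|\le C_2\|h\|_I$ from Theorem \ref{ho1}, gives $|\det(I_p+\chi)|\in[\tfrac12,2]$ for $h\in Q_I(\e_1)$ after shrinking $\e_1$; moreover the $\frac12$-maximality of $(I,x,r)$ propagates along the trajectory of $E$ by a Gronwall argument based on the bound \eqref{int2} on the structure constants, so that $|\wt Y_I(E(h))|\simeq|\wt Y_I(x)|$ on $Q_I(\e_1)$. Using the injectivity statement of Theorem \ref{punturina}(b), the area formula applied through the chart $E_{I,x,r}$ yields
\begin{equation*}
\sigma^p(B_{\cc}(x,r))\simeq r^{\ell(I)}|Y_I(x)|\simeq |\Lambda_p(x,r)|.
\end{equation*}

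\emph{Step 2 (doubling).} Repeating Step 1 for the radius $2r$ (with its own maximal multi-index $I'$) gives $\sigma^p(B_{\cc}(x,2r))\simeq|\Lambda_p(x,2r)|$. From the definition of $\Lambda_p$ preceding \eqref{nu} one has $|\Lambda_p(x,2r)|\le 2^{ps}|\Lambda_p(x,r)|$ tautologically, and the doubling inequality follows, with constant depending only on $p$, $s$ and $L_1$.

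\emph{Step 3 (Poincar\'e inequality).} This is the main obstacle. The factorization $E(h)=\eap^{h_1\wt U_1}\cdots\eap^{h_p\wt U_p}x$ makes $E$ well-suited to Jerison's representation: for $y=E(h)$ and $z=E(h')$ in $B_{\cc}(x,r)$ with $h,h'\in Q_I(\e_0)$, decompose $f(y)-f(z)$ as a telescoping sum along the coordinates of $h$, each term being an integral of $\wt U_j f=r^{\ell_{i_j}}Y_{i_j}f$ along a segment. The ``commutator lemma'' of \cite{LM,MM}, whose validity in class $\B_s$ follows from the first-order expansions \eqref{sogliola}--\eqref{merluzzo} of Theorem \ref{deduciamo}, then expresses each value $r^{\ell_{i_j}}Y_{i_j}f$ along the trajectory as a finite linear combination of quantities $(rX_k f)(E_{J,z,r}(\tau))$ with $\tau$ in a box of controlled size. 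Averaging over $h'$, changing variables through $E$, and controlling the Jacobian by Step 1 produces the required estimate with integration over $B_{\cc}(x,Cr)$.

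The technical burden concentrates in Step 3: the commutator expansion must be implemented in the $\B_s$ class, where the structure constants $c_{ij}^k$ are only $C^1$ along the orbit, so the standard smooth arguments of \cite{LM,MM} have to be replayed using the low-regularity expansions of Theorem \ref{deduciamo}. The bookkeeping closes because the inclusion $E_{J,z,r}(Q_J(\e_0))\subseteq B_{\cc}(z,Cr)\subseteq B_{\cc}(x,C'r)$ (established by Step 1 applied at $z$) and the uniform comparison of Jacobians guarantee that every auxiliary point encountered in the commutator expansion remains inside $B_{\cc}(x,Cr)$, and that the accumulated change-of-variables factors are absorbed in a final constant depending only on $L_1$.
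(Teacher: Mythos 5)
Your proposal is correct and takes essentially the same route as the paper's own (extremely terse) proof, which invokes exactly the ingredients you identify: Theorems \ref{ho1} and \ref{punturina} plus the area formula for the doubling, and the Jerison/Lanconelli--Morbidelli factorization scheme from \cite{LM} (enabled here by the first-order expansion of Theorem~\ref{deduciamo}) for the Poincar\'e inequality. You have filled in details---the volume comparison $\sigma^p(B_{\cc}(x,r))\simeq|\Lambda_p(x,r)|$, the tautological doubling of $|\Lambda_p(x,\cdot)|$, the role of Lemma~\ref{quellos}-type Gronwall control of $\wt Y_I$ along trajectories, and the $\B_s$-regularity of the commutator lemma---that the authors deliberately leave implicit by citing \cite{LM}.
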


The doubling property was already proved by
Street \cite{Street} under   more restrictive assumptions. At the
author's knowledge, the Poincar\'e inequality in such  setting, is
new even in the smooth case.

\begin{proof}[Proof of Corollary \ref{poincuss}]The proof of the doubling
property is an immediate consequence of
Theorems \ref{ho1},  \ref{punturina} and
of area formula.
The proof of the Poincar\'e inequality  can be obtained arguing
as in \cite{LM}.
We avoid here the repetition of the arguments.
\end{proof}

Before starting the proof of Theorem \ref{punturina}-(a),  recall that  it was shown in \cite[Theorem~3.13]{MontanariMorbidelli11d} that maps of the form $E_{I,x} $ can be used to give to  $\O_\H$ a structure of $p$- dimensional integral manifold of the distribution generated by $\P$. More precisely, one can introduce a topology
 $\t(\cal{U})$ generated by the family
 \begin{equation}\label{gasper} 
\begin{aligned} 
\U &:=\{ E_{I, x}(O):  x\in \O, I\in
\I(p,q),\abs{Y_I(x)}\neq 0   
\\ &\qquad \qquad \text{and  $O\subset O_{I, x}$ is a  open
neighborhood
of the origin$\}.$}
\end{aligned}             \end{equation}  
(here $O_{I,x}$ is a neighborhood of the origin such that  $E_{I,x}(O_{I,x})$ is an 
embedded submanifold) and maps $E_{I,x}$ can be used as charts.

In order to prove  \eqref{moma}, we need the following  lemma. Let $\r$ be the distance with respect to the family $\P$ defined in  
\eqref{coscos}. Let $\O_{\P, \cc}^x:=\{y\in\R^n: \r(x,y)<\infty \}$ be the subunit orbit of the family $\P$ (see \eqref{sobonzo}) and let $\t_\r$
 be the topology associated with~$\r$.

\begin{lemma}\label{chop} 
    Let $\H$ be a
 family in $\B_s$ for some $s$. Let ${x_0}\in\R^n$. Then we have the following topologically 
continuous inclusions:
\begin{equation*}
(\O_\H^{x_0}, \t(\U))\stackrel{\mathrm{(a)}}{\subseteq}      
\bigl(\O^{x_0}_{\P,\cc}, \t_{\r} \bigr)
\stackrel{\mathrm{(b)}}{\subseteq}(\O_\H^{x_0}, \t(\U)).
\end{equation*}
\end{lemma}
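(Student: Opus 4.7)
The goal is to establish the set equality $\O_\H^{x_0}=\O_{\P,\cc}^{x_0}$ together with the mutual topological continuity of the two inclusions. I would start with part (a). The set containment $\O_\H^{x_0}\subseteq \O_{\P,\cc}^{x_0}$ is elementary: an integral curve $t\mapsto \exp(tX_i)x$, restricted to $[0,T]$ and reparametrized as $\gamma(s):=\exp(sTX_i)x$ on $[0,1]$, satisfies $\dot\gamma(s)=TX_i=T\cdot r^{\ell_i}Y_i$ with $r=T$ and $\ell_i=1$; hence it is a subunit $\P$-path of $\r$-length at most $T$, and concatenation of finitely many such pieces handles the general case. For the continuity of the identity from $\t(\U)$ to $\t_\r$, I would check that each basic $\t(\U)$-neighborhood $E_{I,x}(Q_I(\delta))$ lies in a small $\r$-ball: by the definition of the approximate exponentials, each factor $\expap(h_k Y_{i_k})$ is a finite concatenation (of cardinality bounded by $2^s$) of flows $\exp(sX_w)$ with individual durations $|s|\le|h_k|^{1/\ell_{i_k}}\le \|h\|_I\le\delta$; chaining the $p$ factors gives a subunit $\H$-path from $x$ to $E_{I,x}(h)$ of length at most $C\delta$, hence a subunit $\P$-path of the same length, so $E_{I,x}(Q_I(\delta))\subseteq B_\r(x,C\delta)$.

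The substantive step is the reverse set containment $\O_{\P,\cc}^{x_0}\subseteq \O_\H^{x_0}$ in part (b). I would argue via integral submanifolds: under hypothesis $\B_s$ together with \eqref{int}, the distribution $P:=\Span\,\P$ is involutive and of class $C^1$, and by the remark following Definition \ref{jdover} the space $(\O_{\P,\cc}^{x_0},\t_\r)$ is an immersed $C^2$ integral submanifold of $P$ through $x_0$; on the other hand, by \cite{MontanariMorbidelli11d} the space $(\O_\H^{x_0},\t(\U))$ is a $C^1$ immersed submanifold of $\R^n$ with tangent space $T_x\O_\H^{x_0}=P_x$, hence an integral submanifold of the same distribution through the same point. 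A Frobenius-type uniqueness for a $C^1$ involutive distribution then forces the two sets to agree. A more constructive variant uses the ball-box theorem for the standard exponentials $\Phi_{I,x,r}(u)=\exp(\sum_\alpha u_\alpha r^{\ell_{i_\alpha}}Y_{i_\alpha})x$ proved independently in Section \ref{inusuale}, which yields $B_\r(x,C^{-1}r)\subseteq \Phi_{I,x,r}(B_\Eucl(C^{-1}))$; since $\Phi_{I,x,r}(u)$ is the time-$1$ flow of a single Lie-algebra element of $\H$ and can be approximated by products of the approximate exponentials $\expap$, each of which lies in $\O_\H^x$ by construction, we obtain $B_\r(x,C^{-1}r)\subseteq \O_\H^x$. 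Chaining this inclusion along a subunit $\P$-path from $x_0$ to any $y\in\O_{\P,\cc}^{x_0}$ delivers $y\in\O_\H^{x_0}$.

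Once $\O_\H^{x_0}=\O_{\P,\cc}^{x_0}=:\O$ is established, the continuity of the identity from $\t_\r$ to $\t(\U)$ reduces to placing a small $\r$-ball inside each basic $E$-neighborhood, and this I would obtain by comparing the two $C^1$ chart maps $\Phi_{I,x,r}$ and $E_{I,x,r}$ of the common submanifold $\O$: the ball-box for $\Phi$ gives a $\r$-ball $B_\r(x,\e)$ inside $\Phi_{I,x,r}(B_\Eucl(C\e))$, and on the overlap $\Phi$ and $E$ are related by a $C^1$ change of coordinates (both having the same tangent at the origin and landing in the same immersed submanifold), so after a controlled resizing the inclusion transfers into the $E$-image. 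The main obstacle is the set-containment step of part (b): in the low-regularity $\B_s$ setting (only $Y_j\in C^1$ and $c_{ij}^k\in C^1_\O$), the classical smooth Sussmann--Stefan/Frobenius uniqueness is not available as a black box, and one must justify carefully that a flow along a $C^1$ Lie-algebra element cannot escape $\O_\H^{x_0}$; this is precisely where the independent analysis of the maps $\Phi$ in Section \ref{inusuale}, combined with the construction of the $\expap$, earns its keep.
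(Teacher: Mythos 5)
Part (a) is fine, and your treatment of the continuity of the identity $\t(\U)\to\t_\r$ agrees with the paper. The difficulty, as you correctly flag, is the reverse set-containment $\O_{\P,\cc}^{x_0}\subseteq\O_\H^{x_0}$: you must show that a subunit $\P$-path, or equivalently the flow of a $C^1$ tangent section of $P$, cannot leave $\O_\H^{x_0}$. But neither of your two routes actually closes this.

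Your ``constructive variant'' has a genuine gap at the approximation step: knowing that $\exp\bigl(\sum u_\alpha r^{\ell_{i_\alpha}}Y_{i_\alpha}\bigr)x$ is approximated by products of $\expap$-factors lying in $\O_\H^x$ does \emph{not} put $\Phi_{I,x,r}(u)$ itself in $\O_\H^x$ --- the orbit is an immersed submanifold, not closed, and ``nearby points in the orbit'' is exactly the membership you are trying to establish. The same issue recurs when you propose to compare ``the two $C^1$ chart maps $\Phi$ and $E$ of the common submanifold $\O$'': that they parametrize the \emph{same} submanifold is the statement to be proved, so the change-of-coordinates argument begs the question. Your first route (a Frobenius/Sussmann--Stefan uniqueness) you yourself concede is unavailable as a black box at $C^1$ regularity, so you have correctly localized the difficulty but have not resolved it.

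The paper closes the gap by invoking Bony's invariance theorem, which \emph{is} available at the right regularity: if $\Sigma$ is a $C^1$ immersed submanifold of $\R^n$ and $V$ is locally Lipschitz with $V(x)\in T_x\Sigma$ on $\Sigma$, then integral curves of $V$ starting on $\Sigma$ stay on $\Sigma$ for small times. Concretely, the paper works in a $\t(\cal S)$-basic neighborhood $\Psi_{I,x}(O_{I,x})$ of $\O_{\P,\cc}$, where the vector fields $V_j=\sum_k\b_j^k Y_{i_k}$ are $C^1$ and commute, and applies Bony's theorem to $V=\sum_j u_j V_j$ with $\Sigma=E_{I,x}(O)$, a $C^1$ embedded piece of $\O_\H$. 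This yields $\Psi_{I,x}(B_\Eucl(\sigma))\subset E_{I,x}(O)$ for small $\sigma$, which is both the set containment and the continuity $\t_\r\to\t(\U)$ at once. (The paper also sketches an alternative: $E_{I,x}|_O$ is a $C^1$ nonsingular map of a $p$-dimensional domain into the $p$-dimensional $C^2$ manifold $\O_{\P,\cc}$, hence open.) Both of these use as an input that $\O_{\P,\cc}$ is already known to be a $C^2$ integral manifold of $P$ with commuting $C^1$ charts --- a black box from the companion paper --- and neither relies on any ``$\Phi$ is approximated by $\expap$'' heuristic.
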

\begin{remark}
Note that  on $\O^{x_0} := \O_{\P,\cc}^{x_0}=\O_\H^{x_0}$ both inclusions $(\O, \t(\U))\subseteq(\O, \t_\cc)\subseteq
(\O, \t_\r)$ are trivially continuous. Therefore,  Lemma \ref{chop} shows that all mentioned topologies are equivalent on  $\O^{x_0}$.
\end{remark}

The proof of Lemma \ref{chop} relies on the following facts discussed in \cite{MontanariMorbidelli11b}. Let $\P=\{Y_1, \dots,Y_q\}$
 be a family of $C^1$ vector fields satisfying \eqref{int} and \eqref{int2}.
Fix a subunit  orbit $\O_{\P,\cc}$. Then $p_x=: p$ is constant as $x\in \O_{\P,\cc}$ and moreover $(\O_{\P,\cc},\t_\r)$ is a 
$C^2$ integral manifold of the distribution spanned by $\P$. See \cite{MontanariMorbidelli11b}.
Charts can be described as follows. For any $x\in\O_{\P,\cc}$ and for each $I\in\I(p,q)$ such that $\abs{Y_I(x)}\neq 0$ there are $\e$, $\delta>0$ and $\b\in 
C^1(B_\Eucl(x,\e),\R^{p\times p})$ such that the vector fields
$      V_j:= \sum_{k=1}^p \b_j^k Y_{i_k}$, where $j=1,\dots, p$, are
$C^1$ smooth of $B_{\Eucl}(x,\e)$ and 
satisfy $[V_j, V_k](\xi)=0$ for all $\xi\in B_\rho(x,\delta)\subset B_\Eucl(x,\e)$ where $\r$ is
defined in \eqref{coscos}. Moreover, the map
\begin{equation}\label{lagiara} 
      \Psi_{I,x}(u):= \exp\Big(\sum_{1\le j\le p}u_j V_j\Big)x
\end{equation}  
is a $C^2$ full rank map from a neighborhood $O_{I,x}$ of the  origin which parametrizes a $C^2$ embedded submanifold $\Psi_{I,x}(O_{I,x})$ which satisfies  $T_{\psi_{I,x}(h)}\Psi_{I,x}(O_{I,x})= P_{\Psi_{I,x}(h)}$
for all $h\in O_{I,x}$. Furthermore, the family 
$\cal{S}:=\{\Psi_{I,x}(O): O\subset O_{I,x}$  is an open neighborhood of the origin$\}$ can be used as a base for a topology $\t(\cal{S})$ on $\O_{\P, \cc}$ which is equivalent to $\t_{\r}$.

All these facts have been proved in \cite{MontanariMorbidelli11b} for Lipschitz vector fields
and in particular hold in our case.

\begin{proof}[Proof of Lemma \ref{chop}]
      Inclusion
 (a) is obvious together with its continuity. Indeed, we always have
 $B_\r(x,r)\supset E_{I,x}(\{\|h\|_I< C^{-1}r\})$ for all $x,r$ and  for
 some universal $C$.

To prove (b), we use the topology $\t(\cal{S})$ instead of $\t_\r$. Let 
 $\Sigma$ be a  $\t(\U)$-neighborhood of some fixed $x\in \O_\H$.
 Taking $I\in\I(p,q)$ such that $\abs{Y_I(x)}\neq 0$, we may assume that for some neighborhood $O$
of the origin 
 $\Sigma \supset E_{I, x}(O)$, where $E_{I, x}(O)$ is a $C^1$ embedded $p$-dimensional 
submanifold. Possibly taking a smaller  $O$, we may assume that $E_{I,x}(O))\cap B_\Eucl(x,\delta)$ is a $C^1$ graph. 
We claim that there is $\sigma >0$ such that 
the inclusion $\Psi_{I,x}(B_\Eucl( \sigma))\subset E_{I, x}(O)$ holds. 
This will conclude the proof. To show this claim,  note that, given $u\in B_\Eucl(\sigma)$, we can write 
$\Psi_{I, x}(u)= \gamma(1)$, where $\gamma$ is the integral curve of the $C^1$
 vector field $\sum_j u_j V_j$. Since the vector fields $V_j$ are $C^1$, 
 the required statement follows if $\sigma $ is small enough by an application of Bony's 
theorem \cite[Theorem~2.1]{Bony69}. 
\footnote{Recall that an aplication of  Bony's theorem states that, if $\Sigma\subset \R^n $ with a topology $\t$
is a $C^1$ immersed submanifold of $\R^n$ and $V$ is a locally Lipschitz vector field such that 
$V(x)\in T_x\Sigma$ for all $x\in \Sigma$, then for all $x\in \Sigma$, $e^{tV}x\in \Sigma$ for $t$ close to $0$. More precisely, for all $\Omega\in\tau$ and $x\in\Omega$ there is $t_0$ such that $e^{tV}x\in \Omega$ if $\abs t\le t_0.$}
\end{proof}

An alternative proof of  (b) relies on the fact that if $\abs{Y_I(x)}\neq 0$, then for all $O\subset O_{I,x}$
the map $E_{I,x}|_O$ with values into the $C^2$ manifold $  \O_{\P, \cc}$ is   $C^1$ and
nonsingular. Therefore it is open, 
because the dimensions of $O$ and $\O_{\P,\cc}$ are the same.

 The following  lifting lemma implies  Theorem \eqref{punturina}-(a).

\begin{lemma}\label{liftato}
Let $\H$ be a family of $\B_s$ vector fields. If $(I,x,r)$ is $\frac
12$-maximal, $x\in\Omega$ and $r< r_0$,
then there are  $C_2>1$ and $
\e_0<1$ such that  
 for all
$\e\leq \e_0$
letting
$C_\e:=C_2\e^{-s} $
 the following holds: let $\gamma$ be a Lipschitz path such
that $\gamma(0)=x$,
$
 \dot\gamma = \sum_{j=1}^qc_j (C_\e^{-1} r)^{\ell_j}Y_j(\gamma)$  a.e. on
$[0,1]$,
where $\abs{c}  \le 1$. Then
 there is a  Lipschitz continuous path $\theta:[0,1]\to \R^n$ such that
$\theta(0)=0$,
$E_{I,x,r}(\theta(t))=\gamma(t)$ 
and $\norm{\theta(t)}_I\le \e$
for all  $t\in[0,1]$.
\end{lemma}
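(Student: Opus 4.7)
The plan is to construct $\theta$ as the solution of a first order ODE in $\R^p$ built from the Jacobian factorization $dE(h)=[\wt Y_{I,E(h)}](I_p+\chi(h))$ of Theorem~\ref{ho1}, then verify the lift identity $E(\theta)=\gamma$ by Gronwall, and finally choose $C_2$ so that the size estimates close up.

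The first step is to convert the $\tfrac12$-maximality of $(I,x,r)$ into a uniform statement along $\gamma$. Since the $Y_j$ are continuous and $\gamma$ has Euclidean speed at most $CrC_\e^{-1}$, for $C_\e$ large $\gamma$ stays in a Euclidean neighborhood of $x$ on which $(I,\cdot,r)$ is, say, $\tfrac14$-maximal. Cramer's rule \eqref{cromo} then yields
\begin{equation*}
\wt Y_j(y)=\sum_{k=1}^{p}\beta_{jk}(y)\,\wt Y_{i_k}(y),\qquad |\beta_{jk}(y)|\le C,\qquad j=1,\dots,q,
\end{equation*}
on that neighborhood. The algebraic point is that the apparent scaling imbalance $r^{\ell_j-\ell_{i_k}}$ cancels exactly against the ratio $|Y_{J_k}(y)|r^{\ell(J_k)}/|Y_I(y)|r^{\ell(I)}$ controlled by maximality, where $J_k$ is obtained from $I$ by replacing $i_k$ with $j$.

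Using this, $\dot\gamma(t)=\sum_{k=1}^{p}b_k(t)\wt Y_{i_k}(\gamma(t))$ with $b_k(t):=\sum_j c_j(t)C_\e^{-\ell_j}\beta_{jk}(\gamma(t))$ and $|b(t)|\le CqC_\e^{-1}$. The candidate lift is the Carath\'eodory solution of
\begin{equation*}
\dot\theta(t)=(I_p+\chi(\theta(t)))^{-1}\,b(t),\qquad \theta(0)=0.
\end{equation*}
By \eqref{dichi}, shrinking $\e_0$ if needed guarantees $|(I_p+\chi(\theta))^{-1}|\le 2$ whenever $\norm{\theta}_I\le\e_0$, so $|\dot\theta(t)|\le CC_\e^{-1}$ and hence $|\theta_k(t)|\le CC_\e^{-1}$ for every $k$. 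Since $\max_k|\theta_k|^{1/\ell_{i_k}}\le(CC_\e^{-1})^{1/s}$ as long as $CC_\e^{-1}\le 1$, taking $C_2$ large enough so that $CC_\e^{-1}\le\e^s$ (recall $C_\e=C_2\e^{-s}$) yields $\norm{\theta(t)}_I\le\e$, and a standard continuation argument extends $\theta$ to all of $[0,1]$.

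It remains to verify $E(\theta(t))=\gamma(t)$. Setting $\delta(t):=E(\theta(t))-\gamma(t)$, the chain rule together with the factorization of $dE$ in Theorem~\ref{ho1} makes the $I_p+\chi$ factors cancel in the definition of $\dot\theta$, leaving
\begin{equation*}
\dot\delta(t)=\sum_{k=1}^{p}b_k(t)\,\bigl(\wt Y_{i_k}(E(\theta(t)))-\wt Y_{i_k}(\gamma(t))\bigr).
\end{equation*}
The $C^1$ bound on $Y_{i_k}$ encoded in $L_1$ in \eqref{lippo} gives $|\dot\delta(t)|\le(CqC_\e^{-1})|\delta(t)|$, so Gronwall \eqref{grroo} together with $\delta(0)=0$ forces $\delta\equiv 0$. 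The hard part will be the coupled bookkeeping: one must show simultaneously that $\gamma$ does not exit the $\tfrac14$-maximality neighborhood and that $\theta$ remains inside the box $Q_I(\e_0)$ where Theorem~\ref{ho1} applies, all with constants depending only on $L_1$. Each of these forces a lower bound on $C_\e$, and the constant $C_2$ in the statement is chosen large enough to satisfy both.
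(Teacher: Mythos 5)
Your argument is essentially correct but takes a genuinely different route from the paper's. Both rest on the same two ingredients—the Jacobian factorization $dE(h)=[\wt Y_{I,E(h)}](I_p+\chi(h))$ of Theorem~\ref{ho1} and the scale separation $C_\e^{-\ell_j}$ supplied by maximality—but deploy them differently. You \emph{construct} $\theta$ as a Carath\'eodory solution of the explicit ODE $\dot\theta=(I_p+\chi(\theta))^{-1}b(t)$ and then \emph{verify} $E\circ\theta=\gamma$ a posteriori by Gronwall applied to $\delta:=E(\theta)-\gamma$. The paper instead defines the lift implicitly, via a local inverse $F$ of $E$ (using that $E$ is a $C^1$ diffeomorphism onto an open subset of the orbit $\O$), obtains the differential bound $\abs{\dot\theta^k}\le CC_\e^{-1}$ by differentiating $\theta^k=F^k\circ\gamma$, and runs a continuation argument: every $\e$-lift automatically improves to $\norm{\theta}_I\le\e/2$, so $t_0:=\sup\{\bar t:\e\text{-lift exists on }[0,\bar t]\}$ is attained and equals $1$. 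A pleasant feature of your route is that it does not need to know a priori that $\gamma$ lies in the orbit $\O_\H$: the paper must establish this first via Lemma~\ref{chop}-(b) so that the local inverse $F$ can be applied along $\gamma$, whereas in your argument the inclusion $\gamma\subset\O_\H$ falls out as a byproduct of the identity $E\circ\theta=\gamma$. Two points you should tighten. First, $\chi$ is only continuous, so your ODE has Carath\'eodory existence but not uniqueness; this is harmless since the lemma only asserts existence, but worth noting since the paper separately proves uniqueness of the lift (which it needs later). Second, the claim that $\gamma$ stays in a region of uniform quasi-maximality, with a constant depending only on $L_1$, does not follow from mere continuity of the $Y_j$ as your phrasing suggests; it is precisely the Gronwall estimate on $\Lambda_p(\gamma_t,r)$ of the type \eqref{kinder} proved in Lemma~\ref{quellos} (equivalently, \cite[Remark~3.3]{MontanariMorbidelli11d}), which also gives the uniform bound $\abs{\beta_{jk}}\le C$ through Cramer's rule, and you should invoke that machinery explicitly rather than a soft compactness/continuity argument.
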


Before giving the proof of the lemma, recall that if  $p\in\N$ and   $\chi, b \in
\R^{p\times p}$,  then
\begin{equation}\label{posteggio}
|\chi|\le \frac 12\quad\Rightarrow \quad
\bigl|(I_p+\chi)^{-1}(I_p+b)-I_p\bigr|\le
2\left(\abs{\chi}+\abs{b}\right)
\quad\text{for all}\quad  b\in \R^{p\times p}.
\end{equation}
This can be seen by writing $(I_p+\chi)^{-1}$ as  a Neumann series.

\begin{proof}[Proof of Lemma \ref{liftato}] 
The argument  of the proof is  analogous to
\cite{NagelSteinWainger,MM}. We include the argument because it will be used in
Proposition \ref{loft}.

First of all, by Lemma \ref{chop}-(b), we know that $\gamma$ belongs to
$ \O_\H$. 
  Let $\e\le \e_0$ and define $C_\e:= C_2\e^{-s}$, where  the
constant $C_2$ will be fixed soon.
Let $\bar t\in[0,1]$. We say that $\theta\in \Lip_\Eucl([0, \bar t], \R^p)$
  is an
$\e$-lifting of $\gamma$ on $[0, \bar t]$ if $\theta(0)= 0$, $E\circ \theta =
\gamma$ on $[0, \bar t]$ and $\norm{\theta(t)}_I\le \e$ for all $t\in
[0, \bar t]$.
Let
$ t_0:= \sup \bigl\{ \bar t\in[0,1]:$ there is a $\e$-lifting
 of $\gamma$ on $[0,\bar t] \bigr\}$.
We already know that $t_0>0$. Our purpose is to show that $t_0=1$.

Next we  claim that if $\theta$
is an $\e$-lifting of $\gamma$ on $[0,\bar t]$, then it should be
\begin{equation}\label{journal}
\norm{\theta(t)}_I\le \frac \e 2 \quad \text{for all $t\in[0, \bar t]$.}
                                          \end{equation}
In order to prove \eqref{journal}, Let $t^*\in (0, \bar t)$. In a neighborhood
$O^*$ of $\theta(t^*)$ the map $E: O^*\to E(O^*)$ is a $C^1$ diffeomorphism onto
an open
neighborhood $E(O^*)$ of $\gamma (t^*)$ in $\O$. Let
$F$ be its inverse. Then for a.e. $t$ close to $t^*$ we get for all
$k\in\{1,\dots,p\}$
\begin{equation*}
\begin{aligned}
\frac{d}{dt}\theta^k(t)=\frac{d}{dt} F^k(\gamma_t)  &=
\sum_{1\le \b\le q} c_\b(t)
C_\e^{-\ell_\b}\wt Y_\b F^k (\g_t)
=
\sum_{1\le \b\le q} c_\b(t) C_\e^{-\ell_\b}
\sum_{1\le j\le p} b_\b^j \wt Y_{i_j} F^k (\g_t) .
 \end{aligned}
\end{equation*}
Here $\wt Y_{i_j}:= r^{\ell_{i_j}}Y_{i_j}$. Differentiating the
identity  $(F\circ E)(h)= h$ for $h\in O^*$, we also get
$ I_p= d(F\circ E) = dF(E)dE =dF(E) [\wt Y_I (E)] (I_p+\chi )$. 
Letting  $I_p+\mu = (I_p+\chi)^{-1}$, we obtain
$\abs{\wt Y_{i_j} F^k} =\abs{  \delta_j^k + \mu_j^k}\le C$
for all $j,k=1,\dots, p$.
  Observe that
$\abs{I_p+\mu}\le C$, by \eqref{posteggio} with $b=0$.
Therefore $\bigl|\frac{d}{dt}\theta^k(t)\bigr|\le C C_\e^{-1}$
for all $t\in [0, \bar t)$.

Now we are in a position to prove estimate \eqref{journal}. Assume that it  is
false. Then, there is $\wt t\in (0, \bar t)$ such that
for all $t\in [0, \wt t)$ we have
$\norm{\theta(t)}<\frac{\e}{2}=\norm{\theta(\wt t)}$.
Therefore, we get for some $k\in \{1, \dots, p\}$,
\begin{equation*}
 \Big(\frac{\e}{2}\Big)^{d_k}= |\theta^k(\wt t)| = \Big|
\int_0^{\wt t} \frac{d}{d\t}\theta^k(\t) d\t \Big|\le
C C_\e^{-s} = C C_2^{-1}\e^s.
\end{equation*}
 Therefore, if $C_2$ is large enough to ensure that  $C
C_2^{-1}<\frac{1}{2^s}$,   this
chain of inequalities  can not hold. This shows \eqref{journal}.

At this point, it is easy to check that an $\e$-lifting on $[0, \bar t]$ is
unique. Indeed, if there were two different liftings $\theta_1 , \theta_2$, then
the set $\{t\in[0, \bar t]: \theta_1(t) = \theta_2(t)\}$ would be nonempty, open
and
closed in $[0, \bar t]$. This implies that $t_0$ is actually a maximum. To
conclude the argument, observe that it can not be $t_0<1$, because in this case
we could extend the lifting on  a small interval $[0, t_0+\delta]$, for some
$\delta>0$. The proof is concluded.
\end{proof}

\begin{remark}
      Note that the constant  $C_2$ depends  quantitatively on the constant  $C_0$ and $L_0$ in
\cite{MontanariMorbidelli11d}. See Remark \ref{valnaso}. In the particular 
 case where $\H$  satisfies the H\"ormander condition at step $s$, then we have  $\O_\H= \O_{\P, 
\cc}= \R^n$ and 
Lemma~\ref{liftato} holds with  $C_2$ depending on $C_0$ and $L_0$.
\end{remark}

We are left with the proof of Theorem \ref{punturina}-(b).
To prove  such statement,
we need a
multidimensional version of the lifting statement just proved and we also
need an \emph{ad hoc} version of Street's  ball-box Theorem  \cite{Street}
(this will be discussed in    Section \ref{inusuale}).

Let $\eta_1$ be the constant in Theorem \ref{pallascatola}.
Fix $\eta_2\le \eta_1$ small enough to ensure that
 \begin{equation}\label{etto}
C_6\eta_2^{1/s}\le C_2^{-1}\e_0^s,\end{equation}
  where
$C_6$ and $ \eta_2$ appear  
in \eqref{strattone}, while $C_2$ and $\e_0$ denote the constants in  the
already proved Theorem \ref{punturina}-(a).  Note that the constant $C_6$ in \eqref{strattone}
is completely independent of the results of the present section.
  Therefore \eqref{moma} and \eqref{etto} 
give the inclusions
\begin{equation*}
 E\big(Q_I(\e_0)\big) 
\supset
B_\rho(x,
C_2^{-1}\e_0^s r)
\supset
 B_\rho(x, C_6\eta_2^{1/s}r)
\supset
\Phi(B_\Eucl(\eta_2))
\supset
B_\rho(x, C_6^{-1}\eta_2^s r),
\end{equation*}
where we kept \eqref{strattone} into account in last inclusion.
Here $(I,x,r)$
is $\eta$-maximal, $E:= E_{I,x,r}$ and  $\Phi:= \Phi_{I,x,r}$.

Here is our lifting result for the  maps $\Phi$.

\begin{proposition}[lifting of standard exponential maps]\label{loft}
Let $\H$ be a  $\B_s$ family. Let $\eta_2$ be
a constant satisfying \eqref{etto}, let $(I,x,r)$ be a
$\frac 12$-maximal triple and let $\Phi= \Phi_{I,x,r}$ and $E:= E_{I,x,r}$ be
the
corresponding  maps. Then there are
$\eta_3\le\eta_2$, $C_3>1$ and
  $\theta\in C^1_\Eucl(B_\Eucl( \eta_3),  Q_I(\e_0)) $ such that
$\theta(0)=0$,
\begin{equation}\label{p1}
E(\theta(u))= \Phi(u)\qquad\text{for all $u\in B_\Eucl( \eta_3) $ }
\end{equation}
and, letting
$d\theta(u)=:I_p + \omega(u)$, we have
\begin{equation}\label{p2}
 \abs{\omega(u)}\le C_3\abs{u}^{1/s}\le \frac 12 \quad\text{for all
$u\in B_\Eucl( \eta_3) $ }.
\end{equation}
The constants $\eta_3$ and $C_3$ depend on   $L_1$
 in \eqref{lippo}.
\end{proposition}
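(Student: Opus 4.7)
The plan is to lift the standard exponential map pointwise through the almost-exponential map $E$ by running the radial paths $t\mapsto\Phi(tu)$ and invoking Lemma~\ref{liftato}. Concretely, for each fixed $u\in B_\Eucl(\eta_3)$ with $\eta_3$ small (to be chosen), set
\[
\gamma_u(t):=\Phi(tu)=\exp\Bigl(t\sum_{k=1}^{p}u_k\wt Y_{i_k}\Bigr)x,\qquad t\in[0,1].
\]
Since $\gamma_u$ is an integral curve of $\sum_k u_k\wt Y_{i_k}$, rewriting $u_k\wt Y_{i_k}=(u_k C_{\e_0}^{\ell_{i_k}})(C_{\e_0}^{-1}r)^{\ell_{i_k}}Y_{i_k}$ shows that $\gamma_u$ is a subunit path of $\cal{P}$ in the sense of Lemma~\ref{liftato} as soon as $\eta_3\le C_{\e_0}^{-s}$. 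That lemma, applied with $\e=\e_0$, yields a unique Lipschitz lifting $\theta_u\colon[0,1]\to Q_I(\e_0)$ satisfying $\theta_u(0)=0$ and $E\circ\theta_u=\gamma_u$. Define $\theta(u):=\theta_u(1)$, which gives \eqref{p1} by construction.

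The next step is to promote the qualitative bound $\|\theta_u(t)\|_I\le\e_0$ to the quantitative estimate $\|\theta(u)\|_I\le C|u|^{1/s}$. Repeating the computation of $\dot\theta$ inside the proof of Lemma~\ref{liftato} but using the sharper velocity estimate $|\dot\gamma_u(t)|\le C|u|$ in place of the crude $CC_\e^{-1}$, together with \eqref{jojo} and \eqref{posteggio}, gives $|\dot\theta_u^k(t)|\le C|u|$ componentwise, hence $|\theta_u^k(1)|\le C|u|$. The definition of the box norm then yields $\|\theta(u)\|_I\le (C|u|)^{1/s}$.

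To derive \eqref{p2} and the $C^1$ regularity, I would differentiate the identity $E(\theta(u))=\Phi(u)$. By Theorem~\ref{ho1}, $dE(\theta(u))=[\wt Y_{I,E(\theta(u))}]\,(I_p+\chi(\theta(u)))$ with $|\chi(\theta(u))|\le C_2\|\theta(u)\|_I$, while the analogous expansion for the standard exponential map in the class~$\B_s$---supplied by Theorem~\ref{pallascatola}---provides $d\Phi(u)=[\wt Y_{I,\Phi(u)}]\,(I_p+b(u))$ with $|b(u)|\le C|u|^{1/s}$. Since $\Phi(u)=E(\theta(u))$, the matrices of scaled commutators coincide and are invertible by $\tfrac12$-maximality, so they cancel and we obtain
\[
d\theta(u)=(I_p+\chi(\theta(u)))^{-1}(I_p+b(u))=:I_p+\omega(u).
\]
Applying \eqref{posteggio} and combining the two estimates yields $|\omega(u)|\le 2(|\chi(\theta(u))|+|b(u)|)\le C_3|u|^{1/s}$. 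Shrinking $\eta_3$ so that also $C_3\eta_3^{1/s}\le\tfrac12$ completes~\eqref{p2}, and continuity of $d\theta$ is inherited from continuity of $\chi$ and~$b$, both composed with the continuous map $\theta$.

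The principal obstacle is the sharp first-order expansion $d\Phi(u)=[\wt Y_{I,\Phi(u)}]\,(I_p+O(|u|^{1/s}))$ under the low $\B_s$ regularity, since this is the single ingredient imported from Section~\ref{inusuale} and is precisely the improvement of Street's framework promised there; once it is in hand the lifting-plus-chain-rule argument above is essentially automatic. A secondary technical point is verifying that $[\wt Y_{I,\Phi(u)}]$ stays nondegenerate along the full lifting range, so that the inverse matrix above is well defined: this follows because $\Phi(u)$ remains $\r$-close to $x$ and the $\B_s$-regularity of the structure functions $c_{ij}^k$ forces continuous dependence of $\wt Y_I$ along the orbit, so $\tfrac12$-maximality of $(I,x,r)$ degrades only mildly, say to $\tfrac14$-maximality of $(I,\Phi(u),r)$, provided $\eta_3$ is small enough.
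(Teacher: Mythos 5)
Your construction mirrors the paper's: lift the radial paths $t\mapsto\Phi(tu)$ via Lemma~\ref{liftato}, set $\theta(u):=\theta_u(1)$, then differentiate $E\circ\theta=\Phi$ and cancel the common factor $[\wt Y_{I,E(\theta(u))}]=[\wt Y_{I,\Phi(u)}]$ using \eqref{jojo} and \eqref{mutti}. The estimate chain in your second and third paragraphs is essentially Steps 1--2 of the paper's proof, and your two ``obstacles'' at the end are the right ones to worry about (the sharp expansion for $d\Phi$ is indeed \eqref{mutti}--\eqref{muto}, and nondegeneracy of $[\wt Y_{I,\Phi(u)}]$ is Lemma~\ref{quellos}).

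However, there is a genuine gap: your argument never establishes that the pointwise-defined $\theta$ is $C^1$, or even continuous, before you differentiate the identity $E(\theta(u))=\Phi(u)$. The radial lifting of Lemma~\ref{liftato} gives, for each fixed $u$, a Lipschitz curve $t\mapsto\theta_u(t)$, and hence a well-defined map $u\mapsto\theta_u(1)$; but nothing in that lemma controls how $\theta_u(1)$ varies with $u$. Your statement that ``continuity of $d\theta$ is inherited from continuity of $\chi$ and $b$'' presupposes that $d\theta$ exists, which is exactly what has to be proved, so the argument is circular as written. The paper closes this gap in Steps~3 and~4 of its proof: it builds the lifting locally as $\theta=F\circ\Phi$ where $F$ is a local $C^1$ inverse of the (nonsingular) map $E$ near $\theta(u_0)$ --- this makes $\theta$ locally $C^1$ by construction --- and then proves a uniqueness/monodromy statement (Step~3) and a continuation argument (Step~4) to glue these local $C^1$ liftings into a single one on all of $B_\Eucl(\eta_3)$. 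Your path-lifting uniqueness from Lemma~\ref{liftato} does ensure that the local definitions agree (which is why the pointwise construction is consistent), but you still need the local $F\circ\Phi$ representation to get $C^1$, and you need the continuation argument to know the local lifting extends to the whole ball while staying inside $Q_I(\e_0)$.

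A minor inaccuracy: \eqref{mutti}--\eqref{muto} give $|b(u)|\le C_4|u|$, which is better than the $C|u|^{1/s}$ you quoted; this does not affect the final bound $|\omega(u)|\le C_3|u|^{1/s}$ since $\|\theta(u)\|_I$ is the term that contributes the $|u|^{1/s}$ power.
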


From now on, we restrict  the choice of $\e_0$ in Theorem \ref{ho1} and Lemma
\ref{liftato}  in order to ensure that
\begin{equation}
 \label{rich}
C_2\e_0\le \frac 14,
\end{equation}
where $C_2$ appears in \eqref{dichi}.

Taking for a while Proposition \ref{loft} for granted, we are ready to prove
the injectivity statement of Theorem~\ref{punturina}.

\begin{proof}[Proof of Theorem \ref{punturina}-(b)]\label{superpuntura}
 We combine the just stated proposition  with  Theorem
\ref{pallascatola}.
Let
$\eta_3$ be the constant in Proposition \ref{loft}.
Since  $\eta_3\le \eta_1$, where $\eta_1$ is  the constant in
Theorem \ref{pallascatola}, 
$\Phi$  must be   one-to-one on $B_\Eucl( \eta_3)$. Thus,  $\theta$ is
one-to-one
on the same set  and  $E$ is   one-to-one on
$\theta(B_\Eucl(\eta_3))$.
Clearly, estimate \eqref{p2} implies that
$
\frac 12\abs{u-\wt u}\le\abs{\theta(u)-\theta(\wt u)}\le 2
\abs{u-\wt u}$, for all $u,\wt u\in B_\Eucl( \eta_3) $. 
Therefore, $\theta(B_\Eucl(\eta_3))\supseteq B_\Eucl(\eta_3/2)$. The
proof is concluded taking $\e_1 =\eta_3/2 $. \end{proof}

\begin{proof}[Proof of Proposition \ref{loft}]
The proof is articulated in three steps.

\step{Step 1.}
Take  $\eta_3$ so small that $\eta_3^{1/s}\le
C_2^{-1}\e_0^s$, where $C_2$ is the constant in Lemma~\ref{liftato}.
Then for any $\wt \eta\le \eta_3$ and for any
  $\theta\in C^1_\Eucl(B_\Eucl( \wt \eta), \R^p)$   such that $\theta(0)=0$ and
$E(\theta)=
\Phi$ on $B_\Eucl(\wt\eta)$,
 we have
\begin{equation}
\norm{\theta(u)}_I\le \frac{\e_0}{2}\qquad\text{for all $u\in B_{\Eucl}(\wt
\eta)$.}
\end{equation}

To accomplish Step 1, assume that a lifting $\theta$ enjoying the described
properties is given. Let $u\in B_\Eucl( \wt\eta)$ and look at the path
$\gamma(t)= \Phi(tu)$, where $t\in [0, 1]$. Our choice of constants ensures that
there is a unique lifting
$\lambda\in \Lip [0,1]$, such that $\lambda(0)= 0$ and
$E(\lambda(t)) =
\gamma(t)$ on $[0,1]$. (In fact here $\lambda$ is $C^1$ smooth, because
$\gamma\in C^1$.) Moreover, see estimate \eqref{journal}, we have
$\bigl\|\lambda(1)\bigr\| \le
\frac{\e_0}{2}$. Since by uniqueness it must be $\theta(tu) = \lambda(t)$ for
all $t$,
Step 1 is accomplished.

\step{Step 2.} Let $\wt\eta\le\eta_3$ and let $\theta \in
C^1(B_\Eucl( \wt\eta))$
 such that $\theta(0)=0$ and $E\circ\theta = \Phi$  holds on
$B_\Eucl(\wt\eta)$.
 Then we claim that  \eqref{p2} holds on $B_\Eucl( \wt\eta)$.

To prove the claim, observe that by Step 1 we know that  $\norm{\theta(u)}\le
\frac{\e_0}{2}$ for all $u\in B_\Eucl(\wt\eta)$. Therefore
\eqref{jojo} gives
\begin{equation*}
d\Phi(u)=   dE(\theta(u)) d\theta(u)
= [\wt Y_I(\Phi(u))]\,[I_p+\chi(\theta(u))] d\theta(u).
\end{equation*}
Combining with  \eqref{mutti}, which states that  $
d\Phi(u)=[\wt Y_I(\Phi(u))]\, [I_p +b
(u)]$, we conclude that
$
 d\theta(u)
=[I_p+\chi(\theta(u))]^{-1}[I_p+b(u)]=:I_p+\omega(u).
$
To estimate $|\omega|$ observe that $\norm{\theta(u)}\le \e_0/2$, by Step 1.
Therefore,
\eqref{dichi} gives
$|\chi(\theta(u)|\le C_2 \norm{\theta(u)} \le \frac 12 C_2\e_0\le
\frac 18,$
by  requirement \eqref{rich} on   $\e_0$.
Then \eqref{posteggio} gives
$ \abs{\omega(u)} \le 2(|\chi(\theta(u))|+ |b(u)|)
\le\frac 14 + 2
C_4 \eta_3\le \frac 12,
$
if we choose $\eta_3$ small enough. Here $C_4$ is the constant appearing in
    \eqref{muto}.
 Thus  $\Lip_\Eucl(\theta; B_\Eucl(\wt
\eta))\le 2$ and moreover
\begin{equation*}
\begin{aligned}
 |\omega(u)|&\le 2 \big(|\chi(\theta(u))| +
|b(u)|\big)\le 2\big(C_2 \norm{\theta(u)}+ C_4|u|\big)
\le  C_3\abs{u}^{1/s},
\end{aligned}
\end{equation*}
for some   $C_3>1$ depending on $L_1$ only. Therefore  \eqref{p2} is completely
proved and Step 2 is finished.

\step{Step 3.} Let $\Omega_1,\Omega_2\subset B_\Eucl(\eta_3) $ be connected
open sets. Assume that $\Omega_1\cap\Omega_2$ is connected and that $0\in
\Omega_1$.  Let
also  $\theta_i\in C^1_\Eucl(\Omega_i, \R^p)$ 
 be such that
  $E\circ\theta_i = \Phi$, on $\Omega_i$ for $i=1,2$. Assume finally that
$\theta_1(0)= 0$ and that $\theta_1(u_0) = \theta_2(u_0) $ for some $u_0\in
\Omega_1\cap\Omega_2$.
Then it must be $\theta_1= \theta_2$ on $\Omega_1\cap \Omega_2$.

To prove Step 3, let $A:=\{ u\in \Omega_1\cap\Omega_2 : \theta_1(u)=
\theta_2(u)\}.$ Note that $A\neq\varnothing$ because $u_0\in A$. We
show that
$A$ is open and closed in $\Omega_1\cap\Omega_2$. To see that $A$
is open, let $\wt u \in A$ and let $\wt h= \theta_1(\wt u)= \theta_2(\wt u)$. By
Step 1
we know that $\bigl\|\wt h\bigr\|\le \frac{\e_0}{2}$. Since the map $E$ is
nonsingular,  there is a
neighborhood $\wt O $ of $h$
 such that
$E|_{\wt O}:\wt O\to E(\wt O )\subset\O$
is a $C^1$ diffeomorphism.
Let $\wt F$ be its inverse. Note also that, since
the maps $\theta_i$ are continuous, we may assume that for a small open set $\wt
V$ containing $\wt u$ and contained in
  $  \Omega_1\cap\Omega_2$, we have $\theta_i(\wt V)\subset \wt O$.
Therefore, starting from  identity $E(\theta_1(u)) = E(\theta_2(u))$ for all
$u\in \wt V$, we can apply $F$ and we get $\theta_1(u)= \theta_2(u)$ for all
$u\in \wt V$. This shows that $A$ is open.

Finally, to show that $A$ is closed, let $u_n\in A$ for all $n\in \N$, $u_n\to
u\in \Omega_1\cap\Omega_2$, as $n\to \infty$.   Then, the continuity of
$\theta_1$ and $\theta_2$
ensures
that $\theta_1(u)= \theta_2(u)$, as desired.

\step{Step 4.}
Finally, we show that the lifting exists. Let $\wt \eta: =\sup\{\eta\in (0,
\eta_3]:
$
there is $\theta\in C^1(B_\Eucl( \wt \eta), \R^p)$  such that $\theta(0)=0 $
and
$E(\theta)
=\Phi$  on $B_\Eucl( \wt\eta)\}$.
We will show that $\wt \eta= \eta_3$.

To show Step 4,  assume that $\wt \eta <\eta_3$ strictly. Let $(\eta_n)$ be a
sequence
with $\eta_n\nearrow \wt \eta$. Then, there are $\theta_n\in
C^1_\Eucl(B_\Eucl( \eta_n), \R^p)$ with $\theta_n(0)=0$ and
$E\circ\theta_n = \Phi$ on $B_\Eucl(\wt \eta)$.
By Step 3, there is a unique $\wt \theta\in C^1(B_\Eucl(\wt \eta))$ which
extends all the maps $\theta_n$.
Note that the map  $\wt\theta$ is
$1/2$-biLipschitz up to $\overline{B_\Eucl( \wt\eta)}=: \wt B$, by Step 2.
Now, fix a point $u_1\in \p \wt B $. Let $B_\Eucl(u_1, \d_1)\subset B_\Eucl(\eta_3)$ 
be a ball of
sufficiently small radius $\d_1$ so that $\wt \theta(B(u_1, \d_1)\cap  \wt
B )\subset O$, where $O$ is a neighborhood of $\wt\theta(u_1)$ such that
$E|_{O}:O\to E(O)\subset\O$
is a $C^1$-diffeomorphism (we can equip $\O_\H= \O_{\P, \cc}$ with  the $C^2$ differential structure on
 $\O$ 
described by the family of charts of the form \eqref{lagiara}).   Let $F: E(O)\to O$ be its inverse.
The set $\Phi^{-1}(E(O))$ contains the ball $B_\Eucl(u_1, \delta_1')$ for some
$\delta_1'\le \delta$. 
 We can define the
map
$\theta_1(u):= F(\Phi(u))$ for all $u\in B(u_1, \d_1')$.
Therefore, by Step 3,
 we have extended the lifting to the domain $\wt B\cup B(u_1, \d_1')$. Iterating
a finite number of times we discover that the map $\wt \theta$ can be extended
to a larger ball $B_\Eucl(\wt\eta+\d)$, for some small $\delta>0$. Therefore it
can not be $\wt \eta
<\eta_3$ strictly  and the proof is concluded.
\end{proof}

\section{Ball-box theorem for standard exponential maps}
\label{inusuale}

Here we prove a
ball-box theorem for the  exponential maps $\Phi$ associated with a family $\P= \{Y_1, \dots, Y_q\}$ of vector fields. 
 We use the methods
introduced in \cite{TaoWright03} and \cite{Street}. However,  since we assume
less regularity
than \cite{Street}, we need to modify slightly some of    the original
techniques.

We keep our usual notation. Given a family $\cal{P}=\{Y_1, \dots, Y_q\}$ of $C^1$ vector fields, with degrees  
$\ell_1, \dots, \ell_q\le s$,  we write  $Y_j = g_j\cdot\nabla$.
 Denote by $B_\r$ balls
with respect to the distance   $\r$ defined in \eqref{coscos}.
It is known that if there are locally bounded coefficients $c_{ij}^k$ such that 
\eqref{intello} holds, then any orbit $\O^{x_0}_{\P,\textup{cc}}:=\{y\in\R^n: \dcc(x,y)<\infty\}$ with topology $\t_{\dcc}$ is an immersed   $C^2$ submanifold and it is an integral
manifold of the distribution generated by  $\P$.  (In the paper \cite{MontanariMorbidelli11b} we show a more general statement involving Lipschitz vector fields.)
  Here we assume that $c_{ij}^k$ are $  C^1$-smooth on each orbit $\O$.
Introduce the admissible constant 
\begin{equation}
\label{elledue}
L_2:
=\sum_{i,j,k,\ell=1 }^q \sup_{ \Omega_0 }
 \abs{c_{ij}^\ell }+ \sup_{ \Omega_0 }\abs{ Y_k c_{ij}^\ell}.  \end{equation} 

\begin{remark}
      If a family $\H= \{X_1, \dots, X_m\}$ belongs to $\B_s$, then the constant $L_2$ associated with the
family $\cal{P}= \{Y_1,\dots, Y_q\}$ satisfies obviously $L_2\leq L_1$, see \eqref{lippo}.
\end{remark}

Let $\Omega\Subset\Omega_0$ be the fixed sets introduced after \eqref{lippo}.  
Fixed $x_0\in\Omega$,  $r>0$ and
$I\in \I(p_{x_0}, q)$, define for $u$ close to the origin
\begin{equation}\label{fi}
\Phi(u): =\Phi_{I,x,r}(u):=  \exp\Big(\sum_{1\le j\le p} u^j \wt
Y_{i_j}\Big)(x_0)
\end{equation}
  where, for $k=1, \dots, q$, we let  $\wt Y_k = r^{\ell_k} Y_k =
\sum_{\a=1}^n
\wt g_k^\a \p_\a.$  If $\abs{Y_I(x_0)}\neq 0$ and $\delta>0$ is small
enough,
then the map
$\Phi\big|_{B_\Eucl( \delta)}:B_\Eucl( \delta)\to\Phi(B_\Eucl(
\delta))\subset\O$ is a
$C^1$ diffeomorphism. Here we equip $\O$ with  the $C^2$ differentiable
structure
given by charts of the form \eqref{lagiara}. The inverse map  $\Psi:=
(\Phi|_{B_\Eucl(\d)})^{-1}$ is a $C^1$
chart on $\cal{O}$. Note that a map $f:\O\to \R$ is $C^1_\O$ if $f\circ\Phi$ is
$C^1_\Eucl$ for all charts of such family.

\begin{theorem}
\label{pallascatola}Let $\P $ be a
family of $ C^1$ vector
fields. Assume that there are  functions $c_{ij}^k$
locally bounded in  $\R^n$ such that 
the integrability condition \eqref{intello} holds at any point.  
Assume also that $c_{ij}^k\in C^1(\O)$.
Let  $(I, x_0, r) $ be  
$ \frac{1}{2}$-maximal, where $x_0\in \Omega$, $r\le r_0$ and $I\in
\I(p_{x_0}, q)$.
Let $p_{x_0}= :p$ be the (constant  on $\cal{O}$) dimension of $P_{x_0}$.
Then there are constants  $\eta_1 ,C_6, C_5 >0$ depending on $L_2$ in
\eqref{elledue} such that
\begin{enumerate*}
\item [(i)] 
there is $A\in C^1_\Eucl(B_\Eucl( \eta_1), \R^{p\times p})$ such that the
vector fields
  $
Z_j= \p_{u_j} + \sum_{k=1}^p
a_j^k(u) \p_{u_k}$ on $B_\Eucl( \eta_1)$, $j=1,\dots, p $
satisfy  $\Phi_* Z_j = \wt Y_{i_j}$ and enjoy estimate
\begin{equation}\label{cicinque}
\sup_{u\in B_\Eucl( \eta_1)}\abs{\nabla A(u)} \le   C_5;
\end{equation}

 \item [(ii)] the map $\Phi=\Phi_{I,x,r}$ is  one-to-one on the Euclidean ball
$B_\Eucl( \eta_1)$;

\item  [(iii)] for all $\eta_2\in
\left]0,\eta_1\right]$ we have  the inclusions
\begin{equation}\label{strattone}
B_\r (x_0, C_6 \eta_2^{1/s}  r  )\supseteq \Phi_{I,x_0,r} (B_\Eucl(
\eta_2))\supseteq  B_\r (x_0, C_6^{-1}\eta_2^{s}r  ),
\end{equation}
\end{enumerate*}
\end{theorem}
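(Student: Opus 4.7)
The plan is to establish, at the heart of everything, the Tao--Wright factorization
\[
d\Phi(u) = [\wt Y_I(\Phi(u))]\,(I_p + b(u)), \qquad b \in C^1_\Eucl, \quad |b(u)| \le C_4 |u|,
\]
on a small Euclidean ball, and then to derive (i), (iii), and (ii) from it in that order. To obtain this factorization I would differentiate the defining ODE $\tfrac{d}{dt}\Phi(tu) = \sum_k u^k \wt Y_{i_k}(\Phi(tu))$ in the parameters $u$; variation of parameters expresses $\partial_{u_j}\Phi(u)$ as a linear combination of $\wt Y_\ell(\Phi(u))$ along the curve $s \mapsto \Phi(su)$, and iterated use of the scaled commutator identity \eqref{stop??} produces an expansion $\partial_{u_j}\Phi(u) = \wt Y_{i_j}(\Phi(u)) + \sum_{\ell=1}^q \beta_j^\ell(u)\,\wt Y_\ell(\Phi(u))$ with $|\beta(u)| \le C|u|$. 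The $\tfrac12$-maximality of $(I,x_0,r)$ and Cramer's rule \eqref{cromo} then re-expand each $\wt Y_\ell$ for $\ell\notin I$ along the frame $\wt Y_I$ with uniformly bounded coefficients, giving the factorization. The $C^1_\Eucl$-regularity of $b$ follows because $\Phi\in C^1_\Eucl$ and the $c_{ij}^k$ are $C^1_\O$ by $\B_s$. Inverting $(I_p+b)$ is then a smooth operation for $|u|$ small, and setting $A(u) := (I_p+b(u))^{-1}-I_p$ together with $Z_j := \partial_{u_j} + \sum_k a_j^k\partial_{u_k}$ yields (i) with $|\nabla A|$ controlled only by $L_1$ from \eqref{lippo}.

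For (iii), the upper inclusion is immediate from the defining exponential: if $|u|\le\eta_2$, the curve $\gamma(t) := \exp(t\sum_k u^k \wt Y_{i_k})\,x_0$, $t\in[0,1]$, is $\r$-subunit at scale $R = \eta_2^{1/s} r$, because each $r^{\ell_{i_k}} u^k$ is of the form $b_k R^{\ell_{i_k}}$ with $|b_k|\le 1$, so $\r(x_0,\Phi(u)) \le R$. The lower inclusion rests on a lifting argument parallel to Lemma~\ref{liftato}: given any $\r$-subunit path from $x_0$ at scale $R$, invert the factorization from (i) to produce a Lipschitz lift $u(t)$ with $\Phi(u(t)) = \gamma(t)$. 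The $\eta$-maximality bounds the coefficients of the Cramer re-expansion of $\wt Y_\ell$ along $\wt Y_I$, yielding $|\dot u(t)| \le C(R/r)^s$; a Gronwall-type closure via \eqref{grroo} keeps the lift inside $B_\Eucl(\eta_2)$ as long as $R \le C_6^{-1}\eta_2^s r$.

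The main difficulty is (ii), the global injectivity on $B_\Eucl(\eta_1)$. Local injectivity at $u=0$ is free from the inverse function theorem, because $d\Phi(0) = [\wt Y_I(x_0)]$ is nondegenerate by $\tfrac12$-maximality. To extend it to the whole ball under the $\B_s$-regularity, where $b$ is only $C^1$ (unlike Street's $C^2$ setting), I would argue via a Gronwall-based comparison: suppose $\Phi(u_0) = \Phi(u_1)$ with $u_0,u_1\in B_\Eucl(\eta_1)$, consider the segment $\alpha_t = (1-t)u_0 + tu_1$, and measure the discrepancy between the two one-parameter flows $s \mapsto \exp(s\sum_k u_i^k\wt Y_{i_k})x_0$ ($i=0,1$) by using $d\Phi(\alpha_t) = [\wt Y_I(\Phi(\alpha_t))](I_p+b(\alpha_t))$ to transport the mismatch into an integral inequality along $\alpha_t$. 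The inequality \eqref{grroo} then yields an estimate of the form $|u_1-u_0| \le C\eta_1\,|u_1-u_0|$, which forces $u_0=u_1$ once $\eta_1$ is chosen small depending only on $L_1$. The delicate point, absent in \cite{Street}, is that without $C^2$-control on $b$ the comparison must stay purely at first order and rely on the sharp $|b(u)|\le C_4|u|$ from (i); this is precisely why (i) has to be proven with quantitatively controlled constants before attempting (ii).
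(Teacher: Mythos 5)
There are two genuine gaps, both concentrated at precisely the points the paper spends the most effort on.

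First, in part (i), your argument for the $C^1_\Eucl$-regularity of $b$ (equivalently $A$) with a bound by $L_1$ is not sound. Since $\Phi$ is only $C^1_\Eucl$, its Jacobian $D\Phi$ is a priori only $C^0$; solving $D\Phi(u)=[\wt Y_I(\Phi(u))](I_p+b(u))$ by Cramer's rule therefore gives $b\in C^0$ only. The fact that $c_{ij}^k\in C^1_\O$ does not automatically transfer to $C^1$-smoothness of $b$ in the parameter $u$, because you would need to differentiate $D\Phi$ itself, which does not exist pointwise. This is exactly why the paper's Theorem~\ref{strettino} routes through the radial ODE \eqref{strott} (derived carefully in the appendix under $C^1$ regularity of the $Y_j$) and a bootstrap: first obtain $A\in C^0$ with $\abs{A(u)}\lesssim\abs{u}$ from the existence part of Street's Theorem 3.10, then use that, together with Lemma~\ref{quellos}, to bound $\abs{\nabla_u(\wt c_{ij}^k\circ\Phi)}$, and only then invoke Street's regularity part to conclude $A\in C^1$ with $\abs{\nabla A}\le C_5$. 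Your variation-of-parameters sketch may produce the $C^0$ factorization but not the $C^1$ bound.

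Second, in part (ii), the proposed Gronwall comparison along the segment $\alpha_t=(1-t)u_0+tu_1$ does not close under the paper's hypotheses. Writing $0=\int_0^1 d\Phi(\alpha_t)(u_1-u_0)\,dt$ and trying to extract $u_0=u_1$ requires controlling the variation of the $n\times p$ frame $[\wt Y_I(\Phi(\alpha_t))]$ \emph{relative to its smallest singular value}. That variation is governed only by the Euclidean Lipschitz constants of the $Y_{i_k}$, which are absolute, whereas the smallest singular value of $[\wt Y_I(x_0)]$ has no lower bound: the paper explicitly allows $\nu(\Omega)=0$. So the inequality $\abs{u_1-u_0}\le C\eta_1\abs{u_1-u_0}$ you aim for cannot be achieved uniformly in $L_1$ alone. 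The paper sidesteps this by working entirely in the parameter domain: Theorem~\ref{iniziare} runs Gronwall on the flow of the \emph{normalized} vector fields $Z_j=\partial_{u_j}+\sum_ka_j^k\partial_{u_k}$, whose coefficients are $O(\abs u)$-close to the identity with Lipschitz constant $C_5$ from part (i), giving that each $\Psi_{u_1}(v)=\exp(\sum v_jZ_j)u_1$ is $\tfrac12$-biLipschitz; global injectivity of $\Phi$ then follows by the Tao--Wright covering argument, not a direct two-point comparison. Your treatment of (iii) matches the paper's (the left inclusion is immediate, the right is the path-lifting/Gronwall argument).
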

In Street \cite{Street}, Theorem \ref{pallascatola} was proved assuming that $Y_j\in C^2$ and that $c_{ij}^k\in C^2$. 
Here we improve the result to $Y_j\in C^1$ and $c_{ij}^k\in C^1_\O$, where $C^1_\O$ refers to $C^1$ regularity on the
manifold $\O$ described by charts of the form \eqref{lagiara}. The main novelty  is in the proof of (ii).
Namely, in Theorem
\ref{iniziare},  we use the Gronwall
inequality instead of
the uniform inverse map theorem used in
\cite[Proposition 3.20]{Street}. 
With  Theorem \ref{iniziare} in hands, the
proof of the injectivity of the map $\Phi$ is identical to the one contained in
\cite{TaoWright03}.

Since we are working with less regularity than \cite{Street}, in order to keep constants under control in terms of our data,  we give also a description of Street's arguments
to show~(i); see  Lemma
\ref{quellos} and Theorem \ref{strettino} below. 
Finally, we do not discuss the proof of (iii). Inclusion in the left-hand
side is
trivial, while the one in the  right-hand side   follows from a well known
path-lifting argument (see \cite{NagelSteinWainger,MM,Street}),
which we already used in Section \ref{distratto}.

Note that under the hypotheses of Theorem \ref{pallascatola}, possibly
shrinking
$\eta_1$, we get
\begin{equation}\label{mutti}
 \frac{\p\Phi}{\p u} = [\wt Y_{i_1,\Phi},\dots, \wt Y_{i_p,\Phi}](I_p+b(u))
\end{equation}
where $I_p+b(u) :=(I_p+a(u))^{-1}$ satisfies for some $C_4$ depending on
$L_2$ in \eqref{elledue},
\begin{equation}
 \label{muto}
\abs{b(u)} = \Bigl|\sum_{k\ge 1}(-A(u))^k\Bigr|\le
C\abs{A(u)}\le  C_4\abs{u}\qquad\text{for all $u\in B_\Eucl( \eta_1)$.}
\end{equation}

Before starting the proof of the theorem, we look at the behaviour of the
``integrability coefficients'' on a ball.
\begin{lemma}\label{quellos}
 Let $I\in \I(p, q)$ be such that  $(I, x_0, r)$ is  $\frac 12$-maximal,
with $x_0\in \Omega$ and $r\le r_0$, where  $r_0$ is small enough to ensure
that:
$B_\r(x, r_0)\subset\Omega_0$. Then we may write
\begin{equation}\label{balotelli}
 [\wt Y_i, \wt Y_j]_x= \sum_{1\le k\le p} \wt c_{ij}^k(x) \wt Y_{i_k,x}\quad
\text{for all $i,j\le q\quad x\in B_\r(x_0,\e_0 r) $,}
\end{equation}
where $\wt c_{ij}^k\in C^1_\cal{O}(B_\r(x_0, \e_0 r))$ and
\begin{equation} \label{ddop}
\max_{\substack{i,j=1,\dots,q\\k=1,\dots, p}}
\Big( \sup_{  B_{\r} (x_0, \e_0 r )}
(\abs{\wt c_{ij}^k } +
 \abs{\wt Y_\ell \wt c_{ij}^k}) \Big) \le
C=C(L_2)  .
\end{equation}
The constants $\e_0 < 1$ and $  C(L_2)>1$ depend on $L_2$ in
\eqref{elledue} but not on
$r\in (0, r_0)$.
\end{lemma}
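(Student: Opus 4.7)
The plan is to use the maximality of $(I,x_0,r)$ to rewrite the commutator identities \eqref{stop??} — which a priori involve all $q$ scaled commutators — as sums over only the $p$ maximal ones $\wt Y_{i_1},\dots,\wt Y_{i_p}$, and then to bound the new structure constants $\wt c_{ij}^k$ and their $\wt Y_\ell$-derivatives via Cramer's rule together with the bracket identities already encoded in $L_1$.

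First, by continuity of $x\mapsto \wt Y_J^K(x)$ on $\Omega_0$ — with modulus controlled by $L_1$ in \eqref{lippo} — together with the $\tfrac12$-maximality of $(I,x_0,r)$, I choose $\e_0\in(0,1)$ small enough that $(I,x,r)$ remains, say, $\tfrac13$-maximal at every $x\in B_\r(x_0,\e_0 r)$. The scaling built into $\r$ means that a unit $\r$-step displaces $x$ by a Euclidean amount proportional to $\max_j r^{\ell_j}\abs{Y_j}$, so $\e_0$ depends only on $L_1$ and not on $r\le r_0$. In particular $\abs{\wt Y_I(x)}\neq 0$ on this ball.

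Next, for each $x$ in the ball and each $\mu\in\{1,\dots,q\}$, Cramer's rule \eqref{cromo} applied with $W=\wt Y_\mu(x)$ gives
\[
\wt Y_\mu(x)=\sum_{k=1}^p \a_\mu^k(x)\,\wt Y_{i_k}(x),\qquad \abs{\a_\mu^k(x)}\le C(L_1),
\]
where the bound follows since the denominator $\abs{\wt Y_I(x)}^2$ dominates every numerator, by the refined maximality. Substituting this into \eqref{stop??} and collecting terms yields
\[
[\wt Y_i,\wt Y_j](x)=\sum_{k=1}^p \wt c_{ij}^k(x)\,\wt Y_{i_k}(x),\qquad \wt c_{ij}^k:=\wh c_{ij}^{i_k}+\sum_{\mu\notin I}\wh c_{ij}^\mu\,\a_\mu^k,
\]
which is \eqref{balotelli}, and whose $L^\infty$ bound in \eqref{ddop} is immediate from \eqref{stop??}.

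The main work is the derivative bound $\abs{\wt Y_\ell \wt c_{ij}^k}\le C(L_1)$. The contribution of $\wt Y_\ell \wh c_{ij}^\mu$ is controlled directly by $L_1$: indeed hypothesis \eqref{giarango} is precisely what makes $Y_\ell \wh c_{ij}^\mu$ a well-defined $C^0_\O$ quantity, and the scaling \eqref{stop??} preserves its bound for $r\le 1$. For $\wt Y_\ell \a_\mu^k$, I differentiate the Cramer expression; this reduces everything to estimating $Y_\ell g_j^\b$ for components of commutators of length $\le s$. The bracket relations \eqref{int} give the pointwise identity $Y_\ell g_j^\b=Y_j g_\ell^\b+\sum_k c_{\ell j}^k g_k^\b$, and since each $X_k$ is $C^s_\Eucl$ the components $g_j^\b$ have bounded Euclidean gradient, with bound depending only on $L_1$; combined with the maximality-based lower bound $\abs{\wt Y_I}^2\ge c(L_1)\max_J\abs{\wt Y_J}^2$, this produces $\abs{\wt Y_\ell \a_\mu^k}\le C(L_1)$ and closes \eqref{ddop}. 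The main obstacle I foresee is the bookkeeping of $r$-scalings so that every constant is uniform in $r\le r_0$: this is where the split $\ell_i+\ell_j>s$ versus $\ell_i+\ell_j\le s$ in \eqref{stop??} must be handled separately — the first case by brute scaling, the second by invoking the Jacobi identity — and where the choice of $\e_0$ from the openness step must be matched against the $r$-independent moduli of continuity furnished by $L_1$.
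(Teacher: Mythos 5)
Your algebraic decomposition in Step~2 is essentially the paper's (the paper applies Cramer's rule directly to the bracket, you first expand each $\wt Y_\mu$ and then substitute; these are equivalent). The two substantive steps, however, have genuine gaps, and both stem from the same oversight: the theorem is supposed to hold with constants depending on $L_1$ \emph{only}, even when the quantity $\nu(\Omega) = \inf\abs{\Lambda_{p_x}(x,1)}$ is zero. Your argument implicitly assumes a lower bound on $\max_J\abs{\wt Y_J}$ that is not available.

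In Step~1 you propagate $\frac12$-maximality to a neighborhood by invoking the Euclidean modulus of continuity of $x\mapsto\wt Y_J^K(x)$, controlled by $L_1$. This yields an \emph{additive} estimate $\abs{\wt Y_J^K(x)-\wt Y_J^K(x_0)}\le C\cdot\abs{x-x_0}$, and since a $\r$-step of length $\e_0 r$ moves $x$ a Euclidean distance of order $\e_0 r^2 L_1$, the perturbation of $\wt Y_J^K$ is of order $\e_0 r^{\ell(J)+2}$. But the $\eta$-maximality condition is a \emph{ratio} condition $\abs{\wt Y_I(x)}>\eta\max_J\abs{\wt Y_J(x)}$, and the reference quantity $\max_J\abs{\wt Y_J(x_0)}$ can be arbitrarily small compared to any fixed power of $r$, since $\nu$ may vanish. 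An additive perturbation then destroys the ratio, and no choice of $\e_0=\e_0(L_1)$ can save it. The paper's argument is different in kind: along a subunit $\r$-path $\g$ one shows, using the integrability \eqref{stop??} to rewrite $\wt Y_j\wt Y_K^H$ back in terms of wedge-products of commutators, the differential inequality $\bigl|\tfrac{d}{dt}\Lambda_p(\g_t,r)\bigr|\le C\abs{\Lambda_p(\g_t,r)}$, whose Gronwall integration gives the \emph{multiplicative} estimate $\abs{\Lambda_p(\g_t,r)-\Lambda_p(x_0,r)}\le\abs{\Lambda_p(x_0,r)}(e^{Ct}-1)$. This propagates the ratio regardless of the size of $\abs{\Lambda_p(x_0,r)}$.

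The same problem resurfaces in your Step~3. After differentiating the Cramer expression for $\a_\mu^k$ (or, in the paper's organization, for $\wt c_{ij}^k$), the quotient rule forces you to estimate $\wt Y_\ell\wt Y_K^H$ \emph{relative to} $\abs{\wt Y_I}$ or $\abs{\Lambda_p}$; what is needed is $\abs{\wt Y_\ell\wt Y_K^H(x)}\le C\abs{\Lambda_p(x,r)}$. Your route --- bounding $Y_\ell g_j^\b$ by its Euclidean $C^1$ norm and using the bracket identity $Y_\ell g_j^\b=Y_jg_\ell^\b+\sum_k c_{\ell j}^k g_k^\b$ --- only gives a bound of the form $\abs{\wt Y_\ell\wt Y_K^H}\le C\,r^{\ell_\ell+\ell(K)}$, which has no relation to $\abs{\Lambda_p(x,r)}$ when the latter is small. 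The paper again exploits integrability: $\wt Y_\ell\wt Y_K^H(x)=\frac{d}{dt}\wt Y_K^H(e^{t\wt Y_\ell}x)\big|_{t=0}$ is a Lie derivative of a $p$-wedge of commutators, which by \eqref{stop??} is itself a linear combination of such wedges with bounded coefficients, yielding the required comparison with $\abs{\Lambda_p(x,r)}$. The scaling issue you flag at the end (the split $\ell_i+\ell_j\gtrless s$) is handled once and for all in \eqref{stop??}; the real difficulty you are missing is the degeneracy of $\nu$.
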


\begin{proof}
Assume for simplicity that $I=(1,\dots,p)$. Let $\gamma$
be a Lipschitz path satisfying, a.e.~on $[0,1]$, $\dot\gamma = \sum_{j=1}^q c_j\wt
Y_j(\gamma)$. Then, arguing as in \cite[Section 4]{Street} 
(see also \cite[Proposition~3.2]{MontanariMorbidelli11d}),
 we get for a.e.~$t\in[0,1]$ the inequality
\(
\bigl|\frac{d}{dt} \Lambda_p(\gamma(t),r)\bigr|\le
C\abs{\Lambda_p(\gamma(t),r)}.
\)
Therefore,    the Gronwall's inequality \eqref{grroo} gives
$\abs{\Lambda_p(\gamma_t, r)-\Lambda_p(x,r)}\le \abs{\Lambda_p(x,r)}(e^{Ct}-1)$. Moreover we have
\begin{equation}\label{kinder}
 \abs{\wt Y_I(x)}> C^{-1} \max_{H\in \I(p, q)} \abs{\wt Y_H(x)}\quad\text{for any $x\in B_\r (x_0, \e_0 r) $.}
\end{equation}
Thus, in  the notation $I_\ell^k = (i_1, \dots, i_{k-1}, \ell,
i_{k+1}, \dots, i_p)$, by the
integrability \eqref{stop??} and
the Cramer's rule \eqref{cromo} we have 
for all $x\in B_\r(x_0, \e_0 r)$
\begin{equation*}
\begin{aligned}
 [\wt Y_i, \wt Y_j]_x  & =
\sum_{\ell=1}^q
\wh c_{ij}^\ell(x) \wt
Y_{\ell,x}
=\sum_{\ell=1}^q \wh c_{ij}^\ell(x)\sum_{k=1}^p
\frac{\langle \wt
Y_{I_\ell^k,x} ,\wt Y_{I,x}\rangle}{\abs{\wt Y_{I,x}}^2} \wt Y_{k,x}
\\&= \sum_{k=1}^p\bigg\{\sum_{\ell=1}^q \wh c_{ij}^\ell(x)  \frac{\langle \wt
Y_{I_\ell^k,x} ,\wt Y_{I,x}\rangle}{\abs{\wt Y_{I,x}}^2}\bigg \}\wt Y_{k,x}
=:\sum_{k=1}^p \wt c_{ij}^k(x) \wt Y_{k,x}.\end{aligned}
\end{equation*}
Note that  by our assumptions,  we have $\wh
c_{ij}^k \in C^1(\O)$. See the discussion after \eqref{fi}.
Moreover, since $Y_j\in C^1_\Eucl$,
 for all $j,\ell$,
we have
$\wt g_j^\ell \in C^1_\cal{O}$.
This ensures that $\wt c_{ij}^k\in C^1_\cal{O}(B_\r(x_0, \e_0 r))$ and easily
we have the
estimate $\abs{\wt  c_{ij}^k}\le C$ on $B_\r(x_0, \e_0 r)$.

Next we need to estimate the derivatives of  the coefficients $\wt c_{ij}^k$.
Note first that 
$\sup\abs{\wt Y_h  \wh c_{ij}^\ell} =r
\sup\abs{ Y_h  \wh c_{ij}^\ell}\le  r L_2\le L_2,$
 see
\eqref{elledue}.
Moreover, observe that for $x\in B_{ \r}(x_0, \e_0 r)$, $h\in\{1,\dots,q\}$,
$K\in \I(p,q)$ and $H\in \I(p,n)$, we have
\begin{equation*}
\abs{\wt Y_h \wt Y_K^H(x)}
 = \Bigl|\frac{d}{dt}\wt Y_K^H(e^{t\wt Y_h}x)\bigr|_{t=0}\Bigr|
\le C\abs{\Lambda_p(x,r)}
\le C\abs{\wt Y_I( x)}.
\end{equation*}
Here we used \eqref{kinder}.  This furnishes,  on $B_{ \r}(x_0, \e_0 r)$, the
estimate
$\Bigl|\wt Y_h \frac{\langle\wt Y_K,\wt Y_I\rangle}{\abs{\wt
Y_I}^2}\Bigr|\le C$,  for all $K\in\I(p,q)$ and $h\in\{1,\dots,q\}$.
The proof of the lemma is easily concluded.
\end{proof}

Let $(I,x,r)$ be a $\frac 12$-maximal triple for  $\P$ 
and let $\Phi= \Phi_{I,x,r}$ be the associated exponential.
For small  $\d>0$, the map $\Phi\bigr|_{B_\Eucl(
\d)}\colon
B_\Eucl( \d)\to
\Phi(B_\Eucl( \d))\subset \cal{O}$ is a $C^1$ diffeomorphism. At this
stage  there
is no control on $\delta$ in terms of the constant $L_2$ in
\eqref{elledue}.
Following \cite{TaoWright03} and  \cite{Street}, for $j\in \{1,\dots, p\}$ let
\begin{equation}
 \wh Z_j=: \sum_{k=1}^{p} \wh h_{j}^k(u) \p_{u_k}=:  \sum_{k=1}^{p}
(\d_{j}^k + \wh a_j^k(u))\p_{u_k}
\end{equation}
 be   the pull-back of  $\wt Y_{i_j}$ on the  small Euclidean ball $B_\Eucl(\d)$.
Note that $\wh a_j^k(0) = 0$. Starting from    identity
$ \sum_j {u_j\p_j} = \sum_j u_j \wh Z_j  $ on the ball $B_\Eucl( \delta)$
and commuting with  $\wh Z_i$,    one can show
 that the coefficients  $\wh a_j^k$  satisfy  the ODE
\begin{equation}
 \label{strott}
\p_\r(\r \wh A(\r\omega))= -\big\{\wh A^2(\r\omega ) +
C(\r\omega)\wh A(\r\omega) + C(\r\omega)\big\}
\end{equation}
for  $0<\r<\delta$  and $\omega\in\mathbb{S}^{p-1}$.
Here
\begin{equation}\begin{aligned}\label{cicala}
\wh A_{ik}(u)& : = \wh a_i^k(u)\quad\text{on $B_\Eucl(\d)$ \quad
and}\quad
\\C_{ik}(u)
&: = \sum_{j=1}^p
  u^j (\wt c_{ij}^k\circ \Phi)(u)\quad\text{on $B_\Eucl(\eta_1)$,}
\end{aligned}\end{equation}
where $\d>0$ is a  possibly  very  small positive number, while we may choose
$\eta_1>0$
 depending ultimately on the admissible constant $L_2$ so that
$\Phi(B_\Eucl(\eta_1))\subseteq
B_\r(x_0, \e_0 r)$.
Equation \eqref{strott} is obtained in \cite{Street}, but some
details are left to the reader. Unfortunately, we have not been able to derive
\eqref{strott} in a completely trivial way. Thus we   decided to fill
up the
details in the appendix. In particular we shall discuss all
the
regularity issues related with the fact that our vector fields $Y_j$ are
$C^1$ smooth only.

Next we give a result, which is basically  a restatement of
\cite[Theorem 3.10]{Street}.
Since we are removing some of Street's  regularity
assumptions, we do
not get  estimates on derivatives of $A$ of order greater than one.

\begin{theorem}\label{strettino} Let $\P=
\{Y_1, \dots, Y_q\}$ be a
  family  as in the assumptions of Theorem \ref{pallascatola}.  Let
$(I, x, r)$ be $\frac 12$-maximal with $x\in\Omega$ and $r\le r_0$. Denote by
$C: B_\Eucl(\eta_1)\to \R^{p\times p}$ the matrix in \eqref{cicala}.
Then, possibly taking a smaller   $\eta_1$ depending on $L_2$ in \eqref{elledue},  
there
is a unique  $A \in C^1( B_\Eucl( \eta_1), \R^{p\times p})
 $ which  solves for all $\omega\in \mathbb{S}^{p-1}$
\begin{equation}\label{disintegro}
\p_\r(\r A(\r\omega))=
  - \{A^2(\r\omega)+
C(\r\omega)A(\r\omega)+C(\r\omega) \} \quad \text{if $0<\r<\eta_1$,}
\end{equation}   satisfies
$A(0)=0 $ and enjoys the global estimate
\begin{align}\label{bottega}
  \sup_{\abs{u}\le \eta_1} \abs{\nabla A(u)}  &\le C_5,
\end{align}
where $C_5$ depends on $L_2$.
Moreover,  on the small ball
$B_\Eucl(\delta)$, we have
 $A_{jk} = \wh A_{jk}$, where $\wh A_{jk}$ is defined in \eqref{cicala}.
\end{theorem}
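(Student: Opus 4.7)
The plan is to convert the singular ODE into an integral equation in which the apparent singularity at the origin is absorbed by the fact that $C(0)=0$, and then to solve it by a Banach fixed-point argument. The key structural observation is that by definition \eqref{cicala} we have $C_{ik}(u) = \sum_j u^j (\wt c_{ij}^k\circ\Phi)(u)$, and by Lemma~\ref{quellos} the coefficients $\wt c_{ij}^k$ are $C^1_\cal{O}$ with $\cal{O}$-derivatives bounded by a constant depending on $L_1$. Pulling back by $\Phi$ gives Euclidean $C^1$ control, so there is $L=L(L_1)$ such that $|C(u)|\le L\abs{u}$ and $|\nabla_\Eucl C(u)|\le L$ on $B_\Eucl(\eta_1)$ for any preliminary $\eta_1$.

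Integrating \eqref{disintegro} from $\r=0$ along the ray $\omega$, using $A(0)=0$, and rescaling with $s=t\r$ turns the ODE into the fixed-point equation
\begin{equation*}
A(u) = T(A)(u) := -\int_0^1 \bigl\{A^2(tu) + C(tu)A(tu) + C(tu)\bigr\}\, dt.
\end{equation*}
I would work in the complete metric space
\begin{equation*}
\cal{X}_{\eta_1}:=\bigl\{A\in C^0(\overline{B_\Eucl(\eta_1)},\R^{p\times p}): A(0)=0,\ \|A\|_\star\le 2L\bigr\},\qquad \|A\|_\star := \sup_{0<|u|\le \eta_1}\frac{\abs{A(u)}}{\abs{u}}.
\end{equation*}
A direct pointwise estimate using $\abs{C(u)}\le L\abs{u}$ gives $\|T(A)\|_\star \le L/2 + c(L)\eta_1$, and analogously $\|T(A_1)-T(A_2)\|_\star \le c(L)\eta_1\|A_1-A_2\|_\star$. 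Choosing $\eta_1=\eta_1(L_1)$ small enough makes $T$ a contraction on $\cal{X}_{\eta_1}$, which yields a unique fixed point $A$. Differentiating $\r\mapsto\r A(\r\omega)$ recovers \eqref{disintegro} pointwise, and uniqueness of the fixed point gives the uniqueness statement (among $C^1$ solutions with $A(0)=0$, since any such solution satisfies the same integral equation and obeys $\|A\|_\star\le 2L$ after possibly shrinking $\eta_1$ via Gronwall applied to $\abs{A(\r\omega)}/\r$).

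To upgrade to $C^1$ and prove \eqref{bottega}, differentiate the integral equation formally:
\begin{equation*}
\p_j A(u) = -\int_0^1 t\,\bigl\{(2A+C)(tu)\,\p_j A(tu) + \p_j C(tu)\bigl(A(tu)+I\bigr)\bigr\}\,dt.
\end{equation*}
Inserting the pointwise bounds $\abs{A(u)}\le 2L\abs{u}$, $\abs{C(u)}\le L\abs{u}$, $\abs{\nabla C}\le L$ produces a linear inequality $\|\nabla A\|_\infty \le C(L_1) + C(L_1)\eta_1\|\nabla A\|_\infty$, which closes up once $\eta_1$ is shrunk once more and delivers the bound $\|\nabla A\|_\infty\le C_5(L_1)$. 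Actual $C^1$ regularity (not just a priori bounds) follows either by performing the whole fixed-point argument in a $C^1$-space using the same estimates, or by running one round of Picard iteration starting from the $C^1$ initial guess $A_0\equiv 0$ and noting that the iteration preserves both $C^0$ and $C^1$ norms.

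Finally, to identify $A$ with $\wh A$ on the (a priori much smaller) ball $B_\Eucl(\delta)$ on which $\wh A$ was constructed by pulling back $\wt Y_{i_j}$, observe that $\wh A(0)=0$ because $\wh Z_j(0)=\p_{u_j}$, and that $\wh A$ satisfies the same ODE \eqref{strott}=\eqref{disintegro} on $B_\Eucl(\delta)$. Hence $\wh A$ also solves the integral equation on $B_\Eucl(\min(\eta_1,\delta))$, lies in $\cal{X}_{\min(\eta_1,\delta)}$ (possibly after shrinking $\delta$ again, which is harmless), and the uniqueness proved above forces $A=\wh A$ there. The main obstacle is the singularity of \eqref{disintegro} at $\r=0$ together with the modest regularity available on $Y_j$ and $c_{ij}^k$; the reformulation as $A=T(A)$ and the identity $C(0)=0$ are precisely what cancel the would-be $1/\r$ term and allow a clean contraction argument with constants controlled by $L_1$.
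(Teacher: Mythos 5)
The proposal has a genuine gap, and it is exactly at the point the paper is most careful about. You write that ``pulling back by $\Phi$ gives Euclidean $C^1$ control, so there is $L=L(L_1)$ such that $\abs{C(u)}\le L\abs{u}$ and $\abs{\nabla_{\Eucl}C(u)}\le L$.'' The first bound is fine: it follows from $\sup\abs{\wt c_{ij}^k}\le C(L_1)$ in \eqref{ddop} and the inclusion $\Phi(B_\Eucl(\eta_1))\subseteq B_\r(x_0,\e_0 r)$. The second bound is not available at this stage. Lemma~\ref{quellos} controls the \emph{orbit} derivatives $\wt Y_h\wt c_{ij}^k$, and what relates these to $\nabla_u(\wt c_{ij}^k\circ\Phi)$ is $Z_h(\wt c_{ij}^k\circ\Phi)=(\wt Y_{i_h}\wt c_{ij}^k)\circ\Phi$ with $Z_h=\p_{u_h}+\sum_j a_h^j\p_{u_j}$. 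To pass from $Z_h$-derivatives to $\p_{u_j}$-derivatives with a constant depending only on $L_1$, you must invert $I_p+a(u)$ on a ball of radius controlled by $L_1$, and the entries $a_h^j$ are precisely (the transposed entries of) the matrix $A$ you are trying to construct. This is circular: the $C^1$ estimate on $C$ that your contraction in a $C^1$-space needs is itself a consequence of the theorem.

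The paper resolves this by a two-pass bootstrap: first apply only the $C^0$ existence part (Street's Theorem~3.10) using the $C^0$ bound $\abs{C(u)}\le L\abs{u}$, obtaining $A\in C^0$ with $\abs{A(u)}\le C\abs{u}$ on a ball of $L_1$-controlled radius; then use that $\abs{a_h^j}$ is small to convert \eqref{orizzo} into a bound on $\nabla_u(\wt c_{ij}^k\circ\Phi)$, which licenses the regularity part of the theorem; finally identify $A=\wh A$ on $B_\Eucl(\delta)$ by uniqueness. Your fixed-point machinery would work for each pass separately -- do the $C^0$ contraction in $\cal X_{\eta_1}$ first, deduce the $C^1$ bound on $C$, and only then run the $C^1$ iteration -- and this would amount to a self-contained rederivation of Street's existence/regularity result rather than invoking it as a black box, which is a legitimate alternative route. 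As written, though, the asserted $\abs{\nabla_\Eucl C}\le L$ is unjustified and the subsequent $C^1$ argument does not close.
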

\begin{proof}
 We recapitulate  Street's arguments.

\step{Step 1.} By Lemma \ref{quellos} there are $\wt c_{ij}^k\in
C^1_{\cal{O}}(B_\r(x_0, \e_0 r))$ such that \eqref{balotelli}
holds  with estimate $\sup_{B_\r(x_0, \e_0 r)}\abs{\wt c_{ij}^k}\le C$, see
\eqref{ddop}.
Although at this stage, we do not have any estimate on the $C^1$ norm
$\sup_u\abs{\nabla_u( \wt
c_{ij}^k\circ\Phi)(u)}$,  we
may  use the existence part of \cite[Theorem 3.10]{Street} to obtain the
existence of a unique  $A\in C^0 (B_\Eucl(\eta_1), \R^{p\times p})$   such
that
\eqref{disintegro} holds and
$ \abs{A(u)}\le C\abs{u}$ for all $u\in B_\Eucl(\eta_1)$.

\step{Step 2.} Now we use \textit{Step 1} to  estimate the $C^1$ norm of
$(\wt c_{ij}^k\circ\Phi)$. Note first that, since   $\wt c_{ij}^k\in
C^1_{\cal{O}}(B_\r(x_0, \e_0  r)) $  and $\Phi\in C^1(B_\Eucl(\eta_1), B_\r(x_0, \e_0 r))$, we
have
 for all $1\le h,k\le p$ and $1\le i,j\le q$,
\begin{equation}\label{orizzo}
 \abs{Z_h(\wt c_{ij}^k\circ\Phi)(u)}= \abs{\wt Y_h \wt c_{ij}^k(\Phi(u))} \le
C\quad\text{for all $u\in
B_\Eucl(\eta_1)$},
\end{equation}
where the constant $C$  depends on $L_2$,   see
estimate
\eqref{ddop}.
By \textit{Step 1}, we can write for $h\in\{1,\dots,p\}$,  $Z_h = \p_{u_h} +
\sum_{j=1}^{p}a_h^j(u)\p_{u_j}$, where $\abs{a_h^j}$ is very small.
Therefore, estimate \eqref{orizzo} is equivalent to $\abs{
\nabla_u (\wt c_{ij}^k\circ\Phi)(u)}\le C $ on $B_\Eucl(\eta_1)$ for
some new
 constants $C$ and $\eta_1$ depending on $L_2$ in \eqref{elledue}.

\step{Step 3.} Here we use the hard work done in the regularity part of
\cite[Theorem 3.10]{Street} to
deduce that $A\in C^1(B_\Eucl(\eta_1), \R^{p\times p})$ and satisfies estimate
\eqref{bottega}.

\step{Step 4.} As a last step, one shows that  $A= \wh A$ on
$B_\Eucl(\d)$. This can be done as in \cite[Lemma 3.1]{Street}.
\end{proof}

In order to show the injectivity, Theorem \ref{pallascatola}-(ii),
given  a  $\frac{1}{2}$-maximal triple  $(I, x_0, r)$,  for
all $u_1\in B_\Eucl( \eta_1)$, consider the  exponential map
\begin{equation}\label{sepoffa}
 \Psi(v):=  \Psi_{u_1}(v):= \exp\Big(\sum_{1\le j\le p} v_j Z_j\Big)u_1 ,
\end{equation}
where $v$ belongs to a neighborhood of the origin in $\R^p$. The map is $C^1$,
because   $Z_j\in C^1$.

\begin{theorem}\label{iniziare}
 Let $(I, x_0, r)$ be $\frac{1}{2}$-maximal, where $x_0\in \Omega$,
$I\in\I(p_{x_0}, q)$ and $r\le r_0$. Then there is $\eta_2>0$ such that
\begin{equation}
 \frac 12 \le \frac{\abs{\Psi_{u_1}(v) - \Psi_{u_1}(\ol v)}}{\abs{v-\ol v}}\le
2\quad\text{for all $u_1\in B_\Eucl( \eta_2)\quad v,\ol v\in
B_{\Eucl}(\eta_2)$.}
\end{equation}
\end{theorem}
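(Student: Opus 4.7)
\textbf{Proof plan for Theorem \ref{iniziare}.}
The plan is to view $\Psi_{u_1}(v)$ as the time-$1$ value of the flow of the $C^1$ vector field $\sum_{j=1}^p v_j Z_j$ and then perform a double Gronwall argument on the difference of two such flows. Recall from Theorem \ref{strettino} that $Z_j = \partial_{u_j} + \sum_{k=1}^p a_j^k(u) \partial_{u_k}$ on $B_\Eucl(\eta_1)$ with $A(0)=0$ and $\sup_{B_\Eucl(\eta_1)} |\nabla A| \le C_5$, hence $|A(u)| \le C_5|u|$. Writing $A(u)v$ for the vector whose $k$-th component is $\sum_j a_j^k(u) v_j$, the curve $\gamma^v(t) := \exp(t \sum_j v_j Z_j) u_1$ is characterized by
\begin{equation*}
\dot{\gamma^v}(t) = v + A(\gamma^v(t))\,v, \qquad \gamma^v(0) = u_1,
\end{equation*}
and $\Psi_{u_1}(v)=\gamma^v(1)$.

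First I would fix $\eta_2\le \eta_1$ small (to be made precise) and show that whenever $u_1,v\in B_\Eucl(\eta_2)$ the solution $\gamma^v$ stays in $B_\Eucl(\eta_1)$ on $[0,1]$. Indeed, as long as $\gamma^v(t)\in B_\Eucl(\eta_1)$,
\begin{equation*}
|\gamma^v(t)-u_1| \le |v|\int_0^t (1+C_5 |\gamma^v(\tau)|)\,d\tau,
\end{equation*}
and a standard application of Gronwall's inequality \eqref{grroo} gives $|\gamma^v(t)|\le (|u_1|+|v|)e^{C_5|v|}\le 3\eta_2$, which for $\eta_2$ small enough is strictly less than $\eta_1$, so the flow is well defined on $[0,1]$.

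Next, the core of the argument: fix $v,\bar v\in B_\Eucl(\eta_2)$, set $\xi(t) := \gamma^v(t)-\gamma^{\bar v}(t)$, and compute
\begin{equation*}
\dot\xi(t) = \bigl[I_p + A(\gamma^v(t))\bigr](v-\bar v) + \bigl[A(\gamma^v(t)) - A(\gamma^{\bar v}(t))\bigr]\bar v.
\end{equation*}
Using $|A(\gamma^v)|\le 3C_5\eta_2$, $|A(\gamma^v)-A(\gamma^{\bar v})|\le C_5|\xi|$, and $|\bar v|\le \eta_2$, this gives the differential inequality
\begin{equation*}
|\dot\xi(t)| \le (1+3C_5\eta_2)\,|v-\bar v| + C_5\eta_2\,|\xi(t)|,\qquad \xi(0)=0.
\end{equation*}
A second application of \eqref{grroo} yields $|\xi(1)|\le 2|v-\bar v|$ provided $\eta_2$ is chosen sufficiently small, which is the upper Lipschitz bound.

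Finally, for the lower bound I would integrate the identity for $\dot\xi$ directly:
\begin{equation*}
\xi(1) = (v-\bar v) + \int_0^1 A(\gamma^v(t))(v-\bar v)\,dt + \int_0^1 \bigl[A(\gamma^v(t))-A(\gamma^{\bar v}(t))\bigr]\bar v\,dt.
\end{equation*}
The first integral is bounded in norm by $3C_5\eta_2|v-\bar v|$, while the second, thanks to the already-established upper bound $|\xi(t)|\le 2|v-\bar v|$, is at most $2C_5\eta_2|v-\bar v|$. Shrinking $\eta_2$ once more so that $5C_5\eta_2\le 1/2$ we obtain $|\xi(1)|\ge \tfrac12|v-\bar v|$, completing the proof. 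The only mild subtlety is keeping all the Gronwall constants expressed purely in terms of $C_5$ (hence of $L_1$), so that $\eta_2$ depends quantitatively on the admissible data; no substantially new idea is required beyond the estimates already provided by Theorem \ref{strettino}.
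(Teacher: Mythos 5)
Your proposal is correct and follows essentially the same strategy as the paper: first control the trajectory so it stays inside $B_\Eucl(\eta_1)$ (the paper's estimate \eqref{zappa3}), then apply Gronwall to a linearized/differenced quantity, with all constants traced back to $C_5$ from Theorem \ref{strettino}. The only difference is technical rather than conceptual: the paper differentiates the flow with respect to $v$, writes the variational equation for $w(t)=\partial y/\partial v$, and shows $\abs{d\Psi_{u_1}(v)-I_p}\le \tfrac12$, from which the bi-Lipschitz bound follows; you instead estimate the difference $\xi(t)=\gamma^v(t)-\gamma^{\bar v}(t)$ of two trajectories directly, proving the upper bound by Gronwall and then the lower bound by feeding the upper bound back into the integrated identity for $\xi(1)$. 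Both hinge on the same ingredients: $A(0)=0$, $\abs{\nabla A}\le C_5$, and Gronwall. Your version has the small advantage of producing the two-sided Lipschitz estimate without invoking $C^1$-dependence of the flow on parameters, but it is otherwise the same argument.
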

Note that Theorem \ref{iniziare} implies that  for all $u_1\in
B_\Eucl(\eta_2)$, the map  $\Psi_{u_1}$ is one-to-one on $B_\Eucl(\eta_2)$
and, by a  standard path-lifting argument, it ensures  the   quantitative 
openness condition
 $\Psi_{u_1}(B_\Eucl(\eta_2))\supset B_\Eucl\big(u_1, \frac{1}{2}\eta_2\big)$
for all $u_1\in B_\Eucl(\eta_2)$.

Once Theorem \ref{iniziare} is proved, then the injectivity of the map
$\Phi$ follows from the  argument in \cite[p.~622]{TaoWright03}, or
\cite[Proposition 3.20]{Street}. We omit the proof.

\begin{proof}[Proof of Theorem \ref{iniziare}] It suffices to   show that
there is
$\eta_2\leq \eta_1$ such that for all $u_1\in B_\Eucl( \eta_2)$,  the map
$\Psi=\Psi_{u_1}$ satisfies    
\begin{equation*}
\sup_{v\in B_\Eucl( \eta_2)}\abs{d\Psi_{u_1}(v) - I_p} \le \frac 12
\quad\text{for all $u_1\in B_\Eucl( \eta_2)$,}
\end{equation*}
where as usual $\abs{\cdot}$ denotes the operator norm.

To show this estimate, recall that the vector
fields
  $Z_j= \p_{j} + \sum_{k} a_j^k(u)\p_k$ on $B_\Eucl( \eta_1)$ satisfy
\eqref{cicinque}. Therefore,
\begin{equation}
 \label{larghetto}
\abs{a(u)}\le C_5 \abs{u} < \eta_1,
\end{equation}
provided that  $\abs{u} < \eta_1/C_5$.
Now we show that
\begin{equation}
\label{zappa3}
\abs{u_1}<\dfrac{\eta_1}{2C_5}
 \quad\text{and}\quad    \abs{v}     <\dfrac{\eta_1}{4C_5}
 \quad\Rightarrow \quad \abs{\Psi_{u_1}(v)} < \frac{\eta_1}{C_5}.
\end{equation}
To prove \eqref{zappa3} let $y = y(t,v): = \Psi_{u_1} (t v)$.  Assume that
for some $t_0 \le 1$ we have
\begin{equation*}
 \frac{\eta_1}{C_5}= \abs{y(t_0,v)}>\abs{y(t,v)}\quad\text{for all $t\in
 \mathopen{[}0,t_0\mathclose{[}$.}
\end{equation*}
Then
\begin{equation*}
\begin{aligned}
 \frac{\eta_1}{C_5}& = \abs{y(t_0)}\le
\abs{u_1}+\Bigl|\int_0^{t_0}\big(I_p + a(y(\t))\big)v d\t\Bigr|
\le
\abs{u_1} +\abs{v} t_0 + \eta_1\abs{v}t_0
\\&< \frac{\eta_1}{2C_5} + \frac{\eta_1}{2C_5}t_0.
\end{aligned}
\end{equation*}
But this can not hold unless $t_0>1$. Therefore, \eqref{zappa3} is proved.

Let us look again at $y=y(t, v) $, note that $\frac{\p y^k}{\p v_j}(0,v)=0$ and
$\abs{a(y(t,v))}< \eta_1$ for all $t\in[0,1]$, if $\abs{u}< \frac{\eta_1}{2
C_5}$ and
$\abs{v}< \frac{\eta_1}{4 C_5}$ (this follows from \eqref{larghetto} and
\eqref{zappa3}).
Write the variational equation
\begin{equation*}
\begin{aligned}
 \frac{d}{dt}\frac{\p y^k}{\p v_j}&=\frac{\p}{\p v_j}
\Big(\sum_{1\le \ell\le p}(\d_\ell^k +
a_\ell^{k}(y)) v_\ell \Big)
=\d_j^k +
a_j^{k}(y) +  \sum_{1\le \ell,h\le p}  \p_h a_\ell^{k}(y)\frac{\p y^h}{\p
v_j} v_\ell.
\end{aligned}
\end{equation*}
Denote $\big(\frac{\p y^k}{\p v_j}(t)\big)_{j,k=1}^p =: w(t)\in \R^{p\times p}$
and
$
 (L_v(t))_h^k:= \sum_{\ell=1}^p \p_h a_\ell^{k}(y(t))v_\ell
$.
Note estimate $\abs{L_v(t)}\le C_5\abs{v}$.
Starting from the  ODE $\dot
w(t) = I_p + a(y(t)) + L_v(t) w(t)$ and integrating, we obtain
\begin{equation*}
\begin{aligned}
 \abs{w(t)-tI_p} & \le \int_0^t \big\{
C_5  \abs{v}\,\abs{w(\t)- \t I_p} +
\t \abs{L_v(\t)} + \abs{a(y(\t))}\big\}d\t
\\&\le C_5\abs{v}\int_0^t  \abs{w(\t)- \t I_p} d\t +
C_5 t\abs{v}+t\eta_1,
\end{aligned}
\end{equation*}
for all $t\in[0,1]$. The Gronwall inequality \eqref{grroo}
gives
\begin{equation*}
 \abs{w(1) - I_p}\le
\frac{C_5\abs{v}+\eta_1}
{C_5\abs{v}}\bigl(\exp(C_5\abs{v})
-1\bigr)\le
\frac 12,
\end{equation*}
as soon as we  assume without loss of generality that  $\eta_1<\frac14$
and we take $\abs{v}\le
\eta_2$
where $\eta_2$ is small enough, depending on $C_5$ and $\eta_1$.
\end{proof}

\section{Some remarks in  a more regular setting} \label{Sttt}

In more regular situations than ours, the Poincar\'e inequality can be obtained putting together
 Jerison's result and Street's theorem, 
\cite{Jerison} and \cite{Street}. 
In this section we briefly  discuss this idea and we explain why our regularity classes fall out of this approach.
The discussion included here has been suggested by the referee.

Let $\H:= \{X_1, \dots, X_m\}$ be a smooth family and assume that the family  $\P=\P_s= \{Y_1, \dots, Y_q\}$
satisfies \eqref{intello} for smooth coefficients $c_{ij}^k$. By \cite{Street}, there is $C_\flat>1$ such that 
if $(I,x,r)$ is $\frac 12$-maximal ($I\in\I(p,q)$), then  the associated map   $\Phi: = \Phi_{I, x, r}$  is one-to-one on $B_\Eucl(C_\flat^{-1})\subset\R^p$ and satisfies
\begin{align*}
C_\flat^{-1}\abs{\Lambda_p(x,r} &\le \abs{J_\Phi(u)}\le C_\flat\abs{\Lambda_p(x,r}\quad\text{for all $u\in B_\Eucl(C_\flat^{-1})$,}
\\& B_\r (x, C_\flat^{-2}r)\subset \Phi(B_\Eucl(C_\flat^{-1})).      
\end{align*}
Moreover, letting $Z_j:=\Phi^*(r X_j)$ be the pullbacks of the 
vector fields
and taking   $f \in C^1(\R^n,\R) $, we have 
  for any $a\in\R$,
\begin{equation*}
      \int_{B_\cc(x, C_\flat^{-2} r)} \abs{f(y)-a}d\s^p(y) \le C_\flat \abs{\Lambda_p(x,r)}
\int_{B_Z(0, C_\flat^{-2})}\abs{g(u)-a}du.
\end{equation*}
where $g=f\circ\Phi$.
By the Jerison's Poincar\'e inequality, since $Z_1, \dots, Z_m$ are smooth  H\"ormander
vector fields in $B_\Eucl(C_\flat^{-1})$, there is $C_\sharp>1$ so that
\begin{equation}\label{lapara} 
      \inf_a\int_{B_Z(0, C_\flat^{-2})}\abs{g(u)-a}du\le C_\sharp   \int_{B_Z(0, C_\flat^{-2})}
\sum_{1\le j\le m}\abs{Z_jg(u)}du.
\end{equation}
Going back to the variable $y$, one gets the Poincar\'e inequality for the original  smooth vector fields $X_j$.

Let us make some comments on the constants $C_\flat,  C_\sharp$ appearing in the computations above. 
By Street's theorem \cite{Street}, we have  
$C_\flat =C_\flat (\norm{Y_j}_{C^2}, \|{c_{ij}^k}\|_{C^2} ).$
  The constant $C_\sharp$  appears in Jerison's paper.  It is known that
$
      C_\sharp = C_\sharp (\|X_j\|_{C^M}), 
$
where $M$ is a rather large number for which there is no precise estimate.

Regularity requirements in the argument above   can be improved avoiding the Jerison's proof of the  Poincar\'e 
inequality and using the approach  
in \cite{MM}. Indeed, in \cite{MM}, using almost exponential maps, it was proved
that  \eqref{lapara} holds with 
\[
     C_\sharp =
  C_\sharp \Bigl (\|Z_j\|_{C^{s-1,1}}, \inf_{B_\Eucl(C_\flat^{-1})} \max_{\abs{w_1}, \dots, \abs{w_p}\le s}
\abs{\det (Z_{w_1}, \dots, Z_{w_p})} \Bigr),
\]
where    $Z_{w_j}$ is a commutator of $Z_1, \dots, Z_m$ of  length $\abs{w_j}$.
By  \cite{Street}, the infimum can be controlled similarly to $C_\flat$,  
in terms of  $\norm{Y_j}_{C^2}, \|{c_{ij}^k}\|_{C^2} $.
In order to ensure that $Z_j:= \Phi^*(r X_j)\in C^{s-1,1}$, it suffices 
to assume that $\Phi\in C^{s,1}$ and $X_j\in C^{s-1,1}$.
In view of the explicit  form  $\Phi(u) = \exp\textstyle{\bigl(\sum_{k=1}^p u_k Y_{i_k}\bigr)(x)}$, a
 concrete sufficient condition  on the original vector fields $X_1, \dots, X_m$ to ensure that $\Phi\in C^{s,1}$ 
is the assumption
 $Y_1, \dots, Y_q\in C^{s,1}$. 
\footnote{Note that the $Y_j$'s involve derivatives up to order $s-1$. Therefore an easy-to-state assumption to ensure that $Y_j\in C^{s,1}$ is   $X_j\in C^{2s-1,1}$.}
This is a stronger assumption than ours, which requires that only the original vector fields  $X_1, \dots, X_m$ belong to $  C^s$.

\appendix
\section{Appendix}


Here we   discuss a detailed derivation of \eqref{strott}, in which
we use the
fact that any orbit $\cal{O}_\cal{P}$ associated with $\cal{P}= \{Y_1, \dots,
Y_q\}$,  is   a $p$-dimensional $C^2$ immersed submanifold of $\R^n$.
 Since we are discussing a regularity issue,  without loss of generality
we may assume that $r=1$ so that no tilde symbols appear.

Recall that given a    $C^2$ manifold $\O$, we say that $U$
is a $C^1$ vector field on $\O$
if in any $C^2$ coordinate system $\O\supset\Omega\ni x\mapsto\a(x)=\xi\in
\a(\Omega)\subset\R^p $, we have  $U_x = \sum  U^j(x)\big
(\frac{\p}{\p \xi_j}\big)_x$ for all $x\in \Omega$, where $U^j = U\a^j$
is a $C^1$ function on $\Omega$.
\begin{remark}\label{varieta} We recall some known facts  about $C^2$ manifolds.
\begin{enumerate}[noitemsep,label=(\alph*)]
 \item  The notion of $C^1$ vector field
is well defined
(coordinate invariant) provided that $\O$ is at least $C^2$.

\item Integral curves of a $C^1$ vector field on a $C^2$ manifold $\O$ are
unique and the map $x\mapsto e^{tU}x$ is
$C^1$~smooth. Indeed, a path $t\mapsto \gamma(t)\in\Omega$
 is an integral curve
of $U$ if and only if $\a\circ\gamma $ is an integral curve of the vector
field $\sum_{k}(U^k\circ\a^{-1})(\xi)\frac{\p}{\p\xi_k}$ which is a $C^1$ vector
field in  $\a(\Omega)$.
\item If $U$ and $V$ are $C^1$ vector fields in $\Omega\subseteq \O$,
one can check that the commutator
$
 [U, V]_x:= \sum_j (U V^j(x)  - V U^j(x)) (\p_{\xi^j})_x
$
is well defined independently on the coordinate system and it turns out
that $[U, V] = \cal{L}_U V$.
Finally, if
$U,V$ are $C^1$ vector fields and
$\Psi\in C^1(\Omega)$, then
\begin{equation}\label{lieto}
\begin{aligned}
 \cal{L}_u V  & = [U, V] = \sum_{j}\{ U V^j(x) - V
U^j(x)\}\Bigl(\frac{\p}{\p
\xi^j}\Bigr)_x \quad\text{and}
\\& [U, \Psi V]= U\Psi V +\Psi[U, V]
\end{aligned}
\end{equation}
\item If $\O$ is a $C^2$ submanifold of $\R^n$ and $Y$ is a $C^1$ vector
field in $\R^n $ which satisfies $Y_x\in T_x \O$ for all $x\in \O$, then  any integral curve of $Y$ starting
from $\O$ can not leave~$\O$ for small times.
\end{enumerate}
 \end{remark}
All  items (a),(b) and (c)  can be checked relying on the fact that the
coordinate
versions of a $C^1$ vector field on a $C^2$ manifold $\O$ are $C^1$.
 Statement (d) is related with the embedding of $\O$ in $\R^n$ and can be
checked
for instance by writing  a $C^2$ local change of coordinates in $\R^n$ which
makes
$\O$
of the form $\R^p\times \{0\}$. This standard argument 
works well as soon as the manifold is $C^{1,1}$ at least. In less regular cases  
one can use Bony's theorem. 

Next we come to the derivation of \eqref{strott}. Let $W:=\sum_{j=1}^p
u_j\p_{u_j}$.
Start from identity
\begin{equation}\label{giacc}
\begin{aligned}
\Phi_*\Bigl( \sum_{j=1}^p u_j \p_{u_j}\Bigr)
& =
\frac{d}{d\e}\Phi((1+\e)u)\bigr|_{\e=0}
= \sum_{j=1}^p u_j
 Y_{j,\Phi(u)}.
\end{aligned}
\end{equation}
Since $\O$ is a $C^2$ manifold and
$T_x\O = \Span\{Y_{j,x}:j=1,\dots, p\} $ for all $x= \Phi(u)$, Remark
\ref{varieta}, (d)  ensures that $\Phi(B_\Eucl(\delta))\subset\O$.
The map $\Phi|_{B_\Eucl(\d)}:B_\Eucl(\d)\to
\Phi(B_\Eucl(\d)) \subseteq\cal{O}$ is a $C^1$ diffeomorphism and its inverse
    $\Psi= \Phi^{-1} $ can be used as a $C^1$ chart.  Then,
at any  $x= \Phi(u)$ with  $\abs{u}\le \delta$ we have
\begin{equation}\label{vectoroz}
(\Phi_*W)_x = \sum_j\Psi^j(x) Y_{j,x}
\end{equation}
Observe that the a priori continuous vector field $\Phi_*W$ is
actually $C^1$. This follows looking at the right-hand side of
\eqref{vectoroz}. Indeed,  $\Psi^j$ is a $C^1 $ function and,
by Remark \ref{varieta}-(d),
$Y_j$ is a $C^1$ vector field
(in both statements $C^1$ refers to the $C^2$ differential
structure of   $\O$ described in \eqref{lagiara}).
Thus its integral curves
are unique
and the flow $x\mapsto
e^{-t \Phi_*W} x$ is a  $C^1$  local  diffeomorphism on $\cal{O}$.
See Remark \ref{varieta}-(b).

Next, note that, if $\delta>0$ is small enough and
$\abs{u}\le \delta$, then  the linear system
$D\Phi(u)h_j(u)=   g_j(\Phi(u))$
has a unique solution $h_j(u)\in \R^p$ (here $D\Phi(u)\in \R^{n\times p}$
denotes the Jacobian matrix). The solution $h_j(u)$ is given by the Cramer's rule
 \eqref{cromo}.
 Let $Z_{j,u}:= h_j(u)\cdot\nabla$ be the
corresponding continuous vector field. Pulling back  \eqref{giacc},  we get
$
 \sum_j u_j\p_j = \sum_j u_j Z_j
$, see \cite{TaoWright03,Street}.

We claim that $u\mapsto W^\sharp h_j(u):=\cal{L}_W Z_j(u)$ is a continuous function in
$B_\Eucl(\delta)$.
By the
Cramer's rule \eqref{cromo}, this claim follows from
the continuity of $u\mapsto W^\sharp (\p_j\Phi)(u)$, which will be
checked in Lemma
\ref{laodi} below, and from the continuity of $g_j\circ\Phi$.

Since $W=\sum_{j=1}^pu_j\partial_{u_j}$ then  $e^{tW} u = e^{t} u$ and
 this gives   the
expansion
$h_i(e^tu)=h_i(u)+W^\sharp h_i(u)t(1+o(1))$, as $t\to 0$ and
 \begin{align*}
 \cal{L}_W Z_i(u)&
= \lim_{t\to 0}\frac 1t
\big\{e^{-t}h_i (e^{t}u)  - h_i(u)\big\}
= - h_i(u) + W^\sharp h_i(u).
\end{align*}

  Let now  $\eta= \sum_{k=1}^{p} \eta_k(u) du_k$
be a smooth $C^\infty_c$ one form.
Testing $\cal{L}_WZ_j$ against   $\eta$, we get
\begin{equation}\label{aojj}
\begin{aligned}
\langle \cal{L}_W Z_i,  \eta\rangle
&=\int \sum_k\Big(W^\sharp h_i^k(u)\eta_k(u) - h_i^k(u)\eta_k(u)\Big)du
\\&
  = \Big\langle \sum_{k,j}
 u_j (D_{u_j} h_i^k)\p_k ,\eta\Big\rangle - \Big\langle\sum_{k}
h_i^k(u)\p_k,\eta\Big\rangle ,
\end{aligned}
\end{equation}
where $D_{u_j} h_i^k\in \mathcal{D}'$ denotes the distributional
derivative of the continouos function $h_i^k$. Equality \eqref{aojj} can be
checked using the definition of $W^\sharp:= \cal{L}_W$ and integrating by parts.

Next we want to write   $\cal{L}_W Z_i$ in a different way, in order to
use the integrability condition.  To this aim, we calculate  its
push forward. Let $\Phi_*: T B_\Eucl(\d)\to T \cal{O}$ be the tangent
map.  Fix $u\in B_\Eucl(\delta)$ and  let $x=\Phi(u)$. Then,
\begin{equation*}\begin{aligned}
\Phi_*(\cal{L}_W Z_i)_u &: = \Phi_*\lim_{t\to 0}\frac{1}{t}
\big\{ e^{-tW}_* (Z_{i,e^{tW}u}) - Z_{i,u}\big\}
\\&=
\lim_{t\to 0}\frac 1t\big\{(\Phi\circ e^{-tW})_* (Z_{i,e^{tW}u} ) -
Y_{i,x})\big\},
\end{aligned}
\end{equation*}
because  $\Phi:B_\Eucl(\d)\to \cal{O}$ is $C^1$, so that $\Phi_* e^{-t W}_*=
(\Phi\circ e^{-t W})_*$.
Since  the function
$e^{-t\Phi_*W}$ is  the flow of a $C^1_\cal{O}$ vector field, it is
$C^1_\cal{O}$, see Remark \ref{varieta}-(b).
Therefore,
$
\Phi\circ e^{-tW} = e^{-t \Phi_*W}\circ \Phi$ and
  we have
\begin{align*}
 (\Phi\circ e^{-t W})_* Z_{i, e^{tW}  u}
&= (e^{-t\Phi_*W}\circ\Phi)_* Z_{i, e^{tW}  u}=
e^{-t\Phi_*W}_* \Phi_* Z_{i, e^{tW}  u}
=
e^{-t\Phi_*W}_* Y_{i, e^{t\Phi_*W}\Phi(u)}.
\end{align*}
We have shown that
$\displaystyle \Phi_*\cal{L}_W Z_i = \cal{L}_{\Phi_* W}\Phi_*Z_i  =
\cal{L}_{\sum_j \Psi^j Y_j} Y_i$ under our regularity assumptions (this
is a well known fact for smooth
vector fields).
Since the vector field $\sum\Psi^j Y_j$ is $C^1$ on $ \cal{O}$,  by
\eqref{lieto},  we may write
 \begin{equation*}
\begin{aligned}
 \Phi_*\cal{L}_W Z_i
 &  = \cal{L}_{\sum \Psi^j Y_j} Y_i = \Big[ \sum _j \Psi^j Y_j, Y_i \Big]
 =- \sum_j Y_i\Psi^j  Y_j - \sum_{j}\Psi^j [Y_i, Y_j]
\\&=-  \sum_j Y_i\Psi^j  Y_j - \sum_{j,k} \Psi^j c_{ij}^k Y_k.
\end{aligned}
\end{equation*}
Pulling  back, we get
\begin{equation}\label{bojj}
\begin{aligned}
\cal{L}_W Z_i &=\Phi_*^{-1} \Phi_*\cal{L}_W Z_i
= -\sum_j h_i^j(u)Z_j - \sum_{j,k} u_j (c_{ij}^k\circ\Phi)
Z_k .
\end{aligned}
\end{equation}
Here we used the equality $Y_i\Psi^j = h_i^j(u)$. \footnote{This  can
be proved as follows. Possibly choosing a smaller $\delta$, we may extend $\Psi$
to a $C^1$ function $\ol\Psi$ defined in a open set in $\R^n$ containing
$\Phi(B_\Eucl(\d))$. Then, 
\begin{equation*}
\begin{aligned}
Y_i\Psi^j(x) & =
\sum_{\a=1}^n \p_\a\ol \Psi^j(\Phi(u)) g_i^\a(\Phi(u))
 =\sum_{\a=1}^n \p_\a\ol\Psi^j(\Phi(u)) \sum_k
h_i^k(u)\p_k\Phi^\a(u) = h_i^j(u),
\end{aligned}
\end{equation*}
because $\ol \Psi\circ\Phi(u)= u$ for all $u$.
}

We have obtained two different expressions for $\cal{L}_W Z_i$, namely
\eqref{aojj} and \eqref{bojj}. In order to compare them, it suffices to
test \eqref{bojj} against $\eta$. This gives
the distributional identity
\begin{equation*}
 \sum_{k,j} u_j D_j h_i^k\p_k - \sum_jZ_i u_j \p_j = -\sum_j Z_i u_j Z_j -
\sum_{j,k} u_j (\wt c_{ij}^k\circ\Phi)Z_k,
\end{equation*}
where $Z_i u_j = h_i(u)\cdot \nabla u_j = h_i^j(u)$. Last equality is
 exactly  formula
(3.5) in \cite{Street}. From now on, it suffices
to follow Street's calculations and we get the ODE \eqref{strott}.

\begin{lemma}
\label{laodi} Let $Y_j:= g_j\cdot\nabla $, where  $ g_j\in C^1_\Eucl$ for
$j=1,\dots,p$. Let
$\Phi$ be the exponential map in \eqref{fi} and let
$\Phi_j:= \p_j\Phi$. Then the map $u\mapsto W^\sharp \Phi_j(u):= \lim_{\e\to
0}\frac{1}{\e}(\Phi_j(e^\e u)- \Phi_j(u))$ is continuous in a neighbourhood of
the origin.
\end{lemma}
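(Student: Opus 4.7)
The plan is to exploit the radial homogeneity of $\Phi$ to turn the $\epsilon$-derivative defining $W^\sharp\Phi_j$ into a $t$-derivative along an ODE, where the $C^1$ hypothesis on the $g_k$ will be sufficient. Let $\phi(t,v)$ denote the time-$t$ flow from $x_0$ of the vector field $\sum_j v_j Y_j$, so that $\Phi(v)=\phi(1,v)$. Since $\sum_j v_j Y_j$ is linear in $v$, the rescaling identity $\phi(\lambda t,v)=\phi(t,\lambda v)$ holds for $\lambda>0$, and in particular $\Phi(\lambda v)=\phi(\lambda,v)$.

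First I would differentiate this identity in $v_j$ to obtain $\lambda\,\Phi_j(\lambda v)=\xi_j(\lambda,v)$, where $\xi_j(t,v):=\partial_{v_j}\phi(t,v)$; specialising to $\lambda=e^\e$ and $v=u$ yields the key formula
\[
\Phi_j(e^\e u)=e^{-\e}\,\xi_j(e^\e,u).
\]
Next I would invoke classical parameter dependence for ODEs: since the vector field $(v,x)\mapsto\sum_k v_k g_k(x)$ is jointly $C^1$ under the hypothesis $g_k\in C^1_\Eucl$, the function $\xi_j$ is jointly continuous in $(t,v)$ and satisfies the variational equation
\[
\partial_t\xi_j(t,v)=g_j(\phi(t,v))+\sum_k v_k\,(Dg_k)(\phi(t,v))\,\xi_j(t,v),\qquad \xi_j(0,v)=0,
\]
whose right-hand side is continuous in $(t,v)$.

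Differentiating the boxed identity in $\e$ at $\e=0$ then gives at once
\[
W^\sharp\Phi_j(u)=-\Phi_j(u)+\partial_t\xi_j(1,u)=-\Phi_j(u)+g_j(\Phi(u))+\sum_k u_k\,(Dg_k)(\Phi(u))\,\Phi_j(u),
\]
which is manifestly continuous in $u$ because $g_j,Dg_j\in C^0$ and both $\Phi$ and $\Phi_j$ are continuous (the latter since $\Phi\in C^1_\Eucl$). The only delicate point, and indeed the whole reason the lemma is nontrivial, is that $\Phi_j$ is not $C^1$ in $u$ without $C^2$ regularity on the $g_k$, so a direct interpretation of $W\Phi_j=\sum u_k\p_{u_k}\Phi_j$ would be illegitimate; the radial field $W$ is special precisely because the scaling identity $\Phi(\lambda v)=\phi(\lambda,v)$ converts derivatives along $W$ into time-derivatives along the flow, and these exist and are continuous already under the assumed $C^1$ regularity on the $g_k$.
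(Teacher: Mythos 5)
Your proof is correct and follows essentially the same strategy as the paper's: rewrite the radial $W$-derivative as a time-derivative of $\xi_j(t,u)=\p_{u_j}\phi(t,u)$ via the scaling identity $\Phi(\lambda v)=\phi(\lambda,v)$, then invoke the variational equation (which needs only $C^1$ regularity of the $g_k$) to obtain the explicit, manifestly continuous formula $W^\sharp\Phi_j(u)=-\Phi_j(u)+g_j(\Phi(u))+\sum_k u_k(Dg_k)(\Phi(u))\Phi_j(u)$. The only cosmetic difference is that you treat $u=0$ and $u\neq 0$ uniformly through the scaling identity, whereas the paper disposes of $u=0$ separately and then checks continuity at the origin as a limit.
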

\begin{proof} Let $\eta(t,u)=\Phi(tu)$ be the solution of
$\frac{\p\eta}{\p t}(t,u) = \sum_{j=1}^p{u_j g_j(\eta(t,u))}$ with
$\eta(0,u)=x_0$.
 Since $e^{\e W}0=0$ for all $\e$, we  have
$W^\sharp \Phi_j(0)= 0$.

In order to calculate $W^\sharp \Phi_j(u)$ for $u\neq 0$, note that
$ \frac{\p\eta }{\p u_j}(t,u)=
t\Phi_j(tu)$, for any $t$ and $u$ close to  $0$. Therefore,
if $u\neq 0$, we have
\begin{equation}\label{calcolino}
\begin{aligned}
 W^\sharp \Phi_j(u)&:
=\lim_{t\to 1}\frac{1}{t-1}
\bigl( \Phi_j(t u )-\Phi_j(u)\bigr)
\\&=\lim_{t\to 1}\frac{1}{t-1}
\Bigl(\frac 1t\frac{\p\eta }{\p u_j}(t,u)- \frac{\p\eta }{\p
u_j}(1,u)\Bigr).
\end{aligned}
\end{equation}
But  the definition of partial derivative and the variational
equation give
\begin{equation*}
\begin{aligned}
 &\lim_{t\to 1}\frac{1}{t-1} \Bigl(\frac{\p\eta}{\p u_j}(t,u) -
\frac{\p\eta}{\p u_j}(1,u) \Bigr)
= \frac{\p^2\eta}{\p t\p u_j} (1,u)
= \frac{\p}{\p u_j} \sum_k u_k g_k(\eta(1,u)).
\end{aligned}
\end{equation*}
In other words, since $\eta(1,u)=\Phi(u)$,
\begin{equation*}
 \frac{\p\eta}{\p u_j} (t,u)  = \frac{\p\eta }{\p u_j} (1,u) + (t-1)
\Bigl(g_j(\Phi(u))+\sum_{k,i} u_k\p_i g_k(\Phi(u))\Phi_j^i(u)\Bigr)(1+o(1)),
\end{equation*}
which, inserted into \eqref{calcolino}, gives
$ W^\sharp\Phi_j(u)  =
 - \Phi_j(u) + g_j(\Phi(u)) +
 \sum_{k,i} u_k\p_i g_k(\Phi(u))\Phi_j^i(u).$
This shows that $W^\sharp\Phi_j$ is a continuous function at any  $u\neq 0$.
Moreover,
 $W^\sharp\Phi_j(u)\to - \Phi_j(0)+ g_j(\Phi(0)) = 0 $, as $u\to 0$. Since
we already claimed that $W^\sharp \Phi_j(0)=0$, the proof is concluded.
\end{proof}

\footnotesize

 \phantomsection
\addcontentsline{toc}{section}{References}  



\def\cprime{$'$}
\providecommand{\bysame}{\leavevmode\hbox to3em{\hrulefill}\thinspace}
\providecommand{\MR}{\relax\ifhmode\unskip\space\fi MR }
\providecommand{\MRhref}[2]{%
  \href{http://www.ams.org/mathscinet-getitem?mr=#1}{#2}
}
\providecommand{\href}[2]{#2}

\normalsize
\bigskip \noindent\sc \small  Annamaria Montanari, Daniele Morbidelli
\\ Dipartimento di Matematica,
Universit\`{a} di Bologna  (Italy)
\\Email: \tt   annamaria.montanari@unibo.it,
daniele.morbidelli@unibo.it

\end{document}